\newtheorem{theorem}{Theorem}[section]
\newtheorem{conjecture}[theorem]{Conjecture}
\newtheorem{lemma}[theorem]{Lemma}
\newtheorem{proposition}[theorem]{Proposition}
\newtheorem{corollary}[theorem]{Corollary}
\theoremstyle{definition}
\newtheorem{definition}[theorem]{Definition}
\theoremstyle{remark}
\newtheorem{remark}[theorem]{Remark}
\newcommand{\IN}{\mathbb{N}}
\newcommand{\IR}{\mathbb{R}}
\newcommand{\IS}{\mathbb{S}}
\newcommand{\CB}{\mathcal{B}}
\newcommand{\CC}{\mathcal{C}}
\newcommand{\CG}{\mathcal{G}}
\newcommand{\CH}{\mathcal{H}}
\newcommand{\CM}{\mathcal{M}}
\newcommand{\CN}{\mathcal{N}}
\newcommand{\CS}{\mathcal{S}}
\newcommand{\CY}{\mathcal{Y}}
\numberwithin{equation}{section}
\begin{document}
	\title[Bounded mean curvature and Morse index]{ Singularities of mean curvature flows with bounded mean curvature and Morse index}
	
	\author{Yongheng Han}
	\address{Institute of Mathematics, AMSS, CAS, Beijing 100190, China}
	
	\email{hyh98@amss.ac.cn}

	\date{\today}

	\keywords{Mean curvature flows, multiplicity-one conjecture, self-shrinker.}
	
	\begin{abstract}
	
  We study the multiplicity of the singularities of mean curvature flows with bounded mean curvature and Morse index. For $3\leq n\leq 6$, we show that  either the mean curvature or the Morse index blows up at the first singular
  time for a closed smooth embedded mean curvature flow in $\mathbb{R}^{n+1}$.
	\end{abstract}
	
\maketitle
\section{Introduction}
Let $\mathbf{x}_0:M^n\to \mathbb{R}^{n+1}$ be a closed hypersurface in $\mathbb{R}^{n+1}$. A one-parameter family of immersions $\mathbf{x}(p,t):M\to  \mathbb{R}^{n+1}$ is called a mean
curvature flow if $\mathbf{x}$ satisfies the equation
\begin{equation}\label{eq0.1}
	\frac{\partial \mathbf{x}}{\partial t}=-H\mathbf{n},\quad \mathbf{x}(0)=\mathbf{x}_0,
\end{equation}
where $H$ denotes the mean curvature of the hypersurface $M_t:=\mathbf{x}(t)(M)$ and $\mathbf{n}$ denotes  the unit normal vector field of $M_t$. Huisken \cite{Hui90} proved that if the flow \eqref{eq0.1} develops a singularity at time $T<\infty$, then the second fundamental form will blow up at time $T$. In his book, Mantegazza \cite{Man11} asked, “Is a uniform bound on the mean curvature $H$ alone sufficient to prevent singularities during the flow?” Recently, Vishwamitra \cite{Vis26} showed that if both 
$H$ and $\nabla H$ are uniformly bounded, the flow can be extended.

In dimension two, Ilmanen \cite{Ilm26} showed that for a smooth embedded mean curvature flow in 
$\mathbb{R}^3$, the tangent flow at the first singular time must be smooth, and he conjectured that the convergence is of multiplicity one. In \cite{LW19,LW22}, Li and Wang showed that if the mean curvature is of type-I, then the convergence is of multiplicity one. As a corollary, they showed that for a closed smooth embedded mean curvature flow in 
$\mathbb{R}^3$, the mean curvature blows up at the first finite singular time, thereby giving an affirmative answer to Mantegazza’s question. Recently, Bamler and Kleiner \cite{BK23} proved the Multiplicity One Conjecture in full generality.

For $n\geq 7$, Vel\'azquez  \cite{Vel94} constructed a family of smooth embedded hypersurfaces evolving in $\IR^{n+1}$ whose blowup is a minimizing cone. Moreover, Stolarski \cite{Sto23} established the existence of mean curvature flows with uniformly bounded mean curvature, thus answering Mantegazza's question in the negative.  For further studies of mean curvature flows modeled by minimizing cone, we refer the reader to  \cite{ADS26,GS18,Liu24,Hua25}.

For $3\leq n\leq 6$, no progress has been made on this problem. Motivated by Vel\'azquez's examples, Ilmanen \cite{Ilm26} conjectured that, for $n \le 6$, all tangent flows of embedded mean curvature flows in $\IR^{n+1}$ are smooth. If both this conjecture and the multiplicity one conjecture (which asserts that if the entropy of the initial hypersurface is less than two, then the tangent flow is of multiplicity one) hold, then for any closed smooth embedded mean curvature flow in $\IR^{n+1}$ with $n\leq 6$, the mean curvature would blow up at the first finite singular time. This motivates the following conjecture.
\begin{conjecture}\label{conj0.3}
	The mean curvature blows up at the first finite singular time for a closed smooth embedded mean curvature flow in $\IR^{n+1}$ with $3\leq n\leq 6$.
\end{conjecture}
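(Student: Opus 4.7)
The plan is to argue by contradiction. Suppose the mean curvature is uniformly bounded on $[0,T)$ yet a singularity forms at some $x_0 \in \IR^{n+1}$ at time $T$; I will show no singularity can actually form there.

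First, by Huisken's monotonicity and Brakke compactness, the parabolically rescaled flows $M_s^k := \lambda_k\bigl(M_{T+\lambda_k^{-2}s}-x_0\bigr)$, with $\lambda_k \to \infty$, subconverge to a self-similarly shrinking tangent flow $\sqrt{-s}\,\Sigma$, where $\Sigma$ is a self-shrinker of multiplicity $m\geq 1$ satisfying $H_\Sigma = \tfrac{1}{2}\langle x,\mathbf{n}\rangle$. Under the parabolic rescaling, $\hat H = \lambda_k^{-1}H \to 0$, and passing this bound to the limit using $\varepsilon$-regularity at regular points of $\Sigma$ gives $H_\Sigma \equiv 0$. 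Combined with the shrinker equation one has $\langle x,\mathbf{n}\rangle \equiv 0$, so $\Sigma$ is a \emph{minimal cone} with vertex at the origin.

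Assuming Conjecture \ref{conj0.2}, $\Sigma$ is smooth everywhere, and a smooth minimal cone must be a hyperplane. Hence the tangent flow is $m$ copies of a hyperplane through $0$. If $m=1$, White's $\varepsilon$-regularity theorem forces $x_0$ to be a regular point of the flow, contradicting the singularity assumption. Thus one is reduced to ruling out the case $m\geq 2$.

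\textbf{The main obstacle} is exactly this last step, namely Ilmanen's multiplicity-one conjecture. Bamler-Kleiner settled it in $\IR^3$, but their proof is strongly dimension-dependent. For $3\leq n\leq 6$, a natural additional hypothesis (matching the theorem in the abstract) is a uniform bound on the Morse index of $M_t$: compactness of self-shrinkers with bounded index (building on Colding-Minicozzi and Bernstein-Wang) controls the number of sheets that can collapse onto the limit plane, and combined with an $F$-stability argument should force $m=1$. Removing the Morse index assumption in dimensions $n\geq 4$ would demand a genuine higher-dimensional analogue of the Bamler-Kleiner method, which is the serious analytic hurdle remaining in Conjecture \ref{conj0.3}.
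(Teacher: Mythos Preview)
The statement you are attempting is \emph{Conjecture}~\ref{conj0.3}, and the paper does not prove it; immediately after stating it the authors write ``For $3\leq n\leq 6$, Conjecture~\ref{conj0.3} is still open.'' So there is no proof in the paper to compare your proposal against. Your write-up is honest about this: you correctly isolate the obstruction as the higher-dimensional multiplicity-one problem, and you do not claim to have overcome it.

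That said, your outline of how the extra Morse-index hypothesis should resolve the issue differs in its mechanisms from what the paper actually does in Theorem~\ref{thm0.1}. You invoke Conjecture~\ref{conj0.2} to get smoothness of the limiting cone, and then appeal to a compactness theory for bounded-index shrinkers to bound the sheet count. The paper takes neither of these routes. With the index bound in hand, a refined-sequence/weak-compactness argument (Proposition~\ref{prop4.2}, building on Schoen--Simon) shows directly that the limit is a \emph{stable} minimal cone with $\mathcal{H}^{n-7+\alpha}$-small singular set; for $n\leq 6$ Simons' theorem then forces it to be a plane, so Conjecture~\ref{conj0.2} is bypassed entirely. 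For multiplicity, the paper does not use any shrinker-compactness statement; instead it runs the Li--Wang height-difference argument: if $m\geq 2$, the normalized gap between top and bottom sheets limits to a positive solution of $\partial_t w = Lw$, and a $\log w$ integration-by-parts shows the limit is $L$-stable, contradicting the general $L$-instability of $F$-stationary varifolds (Lemma~\ref{lm-b}). Your sketch would benefit from replacing the vague ``compactness of self-shrinkers with bounded index\ldots controls the number of sheets'' with this concrete mechanism, since bounded index by itself does not directly cap multiplicity.
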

 In this paper, we study the multiplicity of the singularities of mean curvature flows with bounded mean curvature and bounded index. We show that 
\begin{theorem}[Main theorem]\label{thm0.1}
For $n\ge 3$, let $\mathbf{x}:M^n\times [0,T)\to \mathbb{R}^{n+1}$ be a closed smooth embedded
	mean curvature flow in $\mathbb{R}^{n+1}$. Suppose that
	\begin{equation}\label{eq0.2}
			\sup\limits_{M\times [0,T)}|H|(x,t)=\Lambda<\infty
	\end{equation}
	and 
	\begin{equation}\label{eq0.3}
			\sup\limits_{t\in [0,T)} \mathrm{index}(M_t)=I<\infty ,
	\end{equation}
		where $\mathrm{index}(M_t)$ is the number of negative eigenvalues of the operator $\Delta+|A|^2$ on $M_t$.
	Then there exist a smooth, possibly incomplete, hypersurface $M_T$ and a set $\mathcal{S}:=\overline{M_T}\setminus M_T$ satisfying the following properties.
    \begin{enumerate}
       \item [(1)]$M_t$ converges smoothly to $M_T$ with multiplicity one. 
        \item [(2)] $\mathcal{S}$ has Minkowski dimension $\leq n-7$.
    In particular, for $3\leq n\leq 6$, $M_t$ does not blow up at time $T$. 

    \item [(3)]For any $p\in \mathcal{S}$ and any sequence of time $t_i\to T$, $\frac{1}{\sqrt{T-t_i}}(M_{t_i}-p)$ (after taking a subsequence) converges to a stable  minimal cone with multiplicity one.
    \end{enumerate}
\end{theorem}
\begin{remark}
    Though many examples of mean curvature flows with bounded mean curvature are known, they all appear to have uniformly bounded Morse index. This leads us to conjecture that bounded mean curvature actually implies bounded Morse index.
\end{remark}

	 In \cite{Sto25}, Stolarski proved that every tangent flow 
is a static stationary cone. Moreover, tangent flows at a space-time point $(x_0,t_0)$ are in one-to-one correspondence with the tangent cones of the 
$t_0$-time slice at $x_0$. Thus, the space-time singularity structure of the flow is completely determined by the tangent cones of its time slices. Using his result, we derive the following corollary.
\begin{corollary}\label{coro0.2}
	Let $\mathbf{x}:M^7\times [0,T)\to \mathbb{R}^{8}$ be a closed smooth embedded
	mean curvature flow. Suppose that
	\begin{equation*}
		\sup\limits_{M\times [0,T)}|H|(x,t)<\infty\quad \text{and}\quad\sup\limits_{t\in [0,T)} \mathrm{index}(M_t)<\infty.
	\end{equation*}
		Then,  there exist a smooth (possibly non-complete) hypersurface $M_T$ and a discrete set $\mathcal{S} := \overline{M_T}\setminus M_T$ such that $M_t$ converges smoothly to $M_T$ away from   $\CS$ with multiplicity one. Moreover, for any $p\in\mathcal{S}$ there is a stable regular cone $\Sigma$ such that  $\frac{1}{\sqrt{T-t}}(M_{t}-p)$ converges smoothly to  $\Sigma$ away from the origin (see Figure \ref{fig:stable-cone}).
\end{corollary}

\begin{figure}[htbp]
    \centering
   \resizebox{0.4\textwidth}{!}{

\tikzset{every picture/.style={line width=0.75pt}} 

\begin{tikzpicture}[x=0.75pt,y=0.75pt,yscale=-1,xscale=1]

\draw    (104.5,36) .. controls (144.5,6) and (217.5,26) .. (234.5,67) ;
\draw    (87.5,188) .. controls (71.5,161) and (31.5,137) .. (56.9,90.53) .. controls (82.31,44.06) and (77.22,56.46) .. (104.5,36) ;
\draw    (87.5,188) .. controls (141.1,251.59) and (174.5,245) .. (172.5,321) .. controls (170.5,397) and (188.19,297.13) .. (203.88,244.11) .. controls (219.57,191.09) and (278.5,182) .. (280.5,146) ;
\draw    (234.5,67) .. controls (256,124) and (293.5,81) .. (280.5,146) ;
\draw  [draw opacity=0][dash pattern={on 4.5pt off 4.5pt}] (112.45,100.02) .. controls (117.47,90.5) and (127.48,84) .. (139,84) .. controls (142.6,84) and (146.06,84.64) .. (149.26,85.8) -- (139,114) -- cycle ; \draw  [dash pattern={on 4.5pt off 4.5pt}] (112.45,100.02) .. controls (117.47,90.5) and (127.48,84) .. (139,84) .. controls (142.6,84) and (146.06,84.64) .. (149.26,85.8) ;  
\draw  [draw opacity=0] (150.46,70.27) .. controls (151.11,73.62) and (151.19,77.13) .. (150.63,80.69) .. controls (148.04,97.06) and (132.67,108.22) .. (116.31,105.63) .. controls (110.47,104.71) and (105.29,102.16) .. (101.17,98.51) -- (121,76) -- cycle ; \draw   (150.46,70.27) .. controls (151.11,73.62) and (151.19,77.13) .. (150.63,80.69) .. controls (148.04,97.06) and (132.67,108.22) .. (116.31,105.63) .. controls (110.47,104.71) and (105.29,102.16) .. (101.17,98.51) ;  
\draw  [draw opacity=0] (239.69,117.06) .. controls (233.75,125.63) and (223.51,130.86) .. (212.39,129.89) .. controls (201.36,128.92) and (192.25,122.11) .. (187.85,112.78) -- (215,100) -- cycle ; \draw   (239.69,117.06) .. controls (233.75,125.63) and (223.51,130.86) .. (212.39,129.89) .. controls (201.36,128.92) and (192.25,122.11) .. (187.85,112.78) ;  
\draw  [draw opacity=0][dash pattern={on 4.5pt off 4.5pt}] (194.94,119.05) .. controls (199.37,116.47) and (204.51,115) .. (210,115) .. controls (218.28,115) and (225.78,118.36) .. (231.21,123.79) -- (210,145) -- cycle ; \draw  [dash pattern={on 4.5pt off 4.5pt}] (194.94,119.05) .. controls (199.37,116.47) and (204.51,115) .. (210,115) .. controls (218.28,115) and (225.78,118.36) .. (231.21,123.79) ;  
\draw  [draw opacity=0][dash pattern={on 4.5pt off 4.5pt}] (132.45,264.4) .. controls (144.97,261.04) and (161.45,259) .. (179.5,259) .. controls (200.27,259) and (218.96,261.7) .. (231.94,266.01) -- (179.5,280.5) -- cycle ; \draw  [dash pattern={on 4.5pt off 4.5pt}] (132.45,264.4) .. controls (144.97,261.04) and (161.45,259) .. (179.5,259) .. controls (200.27,259) and (218.96,261.7) .. (231.94,266.01) ;  
\draw  [draw opacity=0][dash pattern={on 4.5pt off 4.5pt}] (149.57,323.98) .. controls (157.03,322.07) and (165.4,321) .. (174.25,321) .. controls (185.74,321) and (196.43,322.81) .. (205.37,325.91) -- (174.25,351) -- cycle ; \draw  [dash pattern={on 4.5pt off 4.5pt}] (149.57,323.98) .. controls (157.03,322.07) and (165.4,321) .. (174.25,321) .. controls (185.74,321) and (196.43,322.81) .. (205.37,325.91) ;  
\draw    (199,288) -- (414.57,231.51) ;
\draw [shift={(416.5,231)}, rotate = 165.31] [color={rgb, 255:red, 0; green, 0; blue, 0 }  ][line width=0.75]    (10.93,-3.29) .. controls (6.95,-1.4) and (3.31,-0.3) .. (0,0) .. controls (3.31,0.3) and (6.95,1.4) .. (10.93,3.29)   ;
\draw    (389.5,99) -- (501.5,346) ;
\draw    (501.5,346) -- (609.45,98.98) ;
\draw  [draw opacity=0][dash pattern={on 4.5pt off 4.5pt}] (389.5,99) .. controls (389.48,98.83) and (389.48,98.65) .. (389.48,98.48) .. controls (389.48,84.95) and (438.72,73.98) .. (499.48,73.98) .. controls (560.23,73.98) and (609.48,84.95) .. (609.48,98.48) .. controls (609.48,98.65) and (609.47,98.81) .. (609.45,98.98) -- (499.48,98.48) -- cycle ; \draw  [dash pattern={on 4.5pt off 4.5pt}] (389.5,99) .. controls (389.48,98.83) and (389.48,98.65) .. (389.48,98.48) .. controls (389.48,84.95) and (438.72,73.98) .. (499.48,73.98) .. controls (560.23,73.98) and (609.48,84.95) .. (609.48,98.48) .. controls (609.48,98.65) and (609.47,98.81) .. (609.45,98.98) ;  
\draw  [draw opacity=0] (605.74,100.49) .. controls (602.08,116.12) and (555.9,128.48) .. (499.48,128.48) .. controls (440.66,128.48) and (392.98,115.05) .. (392.98,98.48) -- (499.48,98.48) -- cycle ; \draw   (605.74,100.49) .. controls (602.08,116.12) and (555.9,128.48) .. (499.48,128.48) .. controls (440.66,128.48) and (392.98,115.05) .. (392.98,98.48) ;  
\draw    (495.5,79) -- (495.5,124) ;
\draw [shift={(495.5,126)}, rotate = 270] [color={rgb, 255:red, 0; green, 0; blue, 0 }  ][line width=0.75]    (10.93,-3.29) .. controls (6.95,-1.4) and (3.31,-0.3) .. (0,0) .. controls (3.31,0.3) and (6.95,1.4) .. (10.93,3.29)   ;
\draw    (388.5,124) .. controls (428.5,94) and (464.5,359) .. (504.5,329) ;
\draw    (504.5,329) .. controls (544.5,299) and (568.5,142) .. (608.5,112) ;
\draw  [draw opacity=0][dash pattern={on 4.5pt off 4.5pt}] (444.14,302.93) .. controls (446.15,295.15) and (470.06,289) .. (499.25,289) .. controls (528.6,289) and (552.6,295.21) .. (554.39,303.06) -- (499.25,304) -- cycle ; \draw  [dash pattern={on 4.5pt off 4.5pt}] (444.14,302.93) .. controls (446.15,295.15) and (470.06,289) .. (499.25,289) .. controls (528.6,289) and (552.6,295.21) .. (554.39,303.06) ;  
\draw  [draw opacity=0][dash pattern={on 4.5pt off 4.5pt}] (389.14,161.77) .. controls (400.81,148.19) and (449.81,138) .. (508.5,138) .. controls (562,138) and (607.45,146.47) .. (623.91,158.25) -- (508.5,168) -- cycle ; \draw  [dash pattern={on 4.5pt off 4.5pt}] (389.14,161.77) .. controls (400.81,148.19) and (449.81,138) .. (508.5,138) .. controls (562,138) and (607.45,146.47) .. (623.91,158.25) ;  
\draw    (56.5,268) -- (89.48,212.72) ;
\draw [shift={(90.5,211)}, rotate = 120.82] [color={rgb, 255:red, 0; green, 0; blue, 0 }  ][line width=0.75]    (10.93,-3.29) .. controls (6.95,-1.4) and (3.31,-0.3) .. (0,0) .. controls (3.31,0.3) and (6.95,1.4) .. (10.93,3.29)   ;

\draw (276,253.4) node [anchor=north west][inner sep=0.75pt]  [font=\Huge]  {$\frac{1}{\sqrt{T-t}}$};
\draw (419,20.4) node [anchor=north west][inner sep=0.75pt]  [font=\Huge]  {$stable\ cone$};
\draw (32,270.4) node [anchor=north west][inner sep=0.75pt]  [font=\Huge]  {$M_{t}$};

\end{tikzpicture}

}
     \caption{Blow-up near a singularity.}
    \label{fig:stable-cone}
\end{figure}

Our proof of the multiplicity one theorem is motivated by the compactness theory for self-shrinkers.
Consider a sequence of shrinking surfaces with uniformly bounded area and genus. If a subsequence converges to a smooth shrinking surface with multiplicity greater than one, then Colding and Minicozzi~\cite{CM13} constructed a positive Jacobi function for the $L$-operator(see \eqref{eq5.12} for the definition) by renormalizing the height function. 
Using this Jacobi function, they showed that the self-shrinker must be stable, which contradicts Lemma~\ref{lm-b}. 

In the flow setting, we similarly construct a positive immortal solution to $\partial_t u - Lu = 0$ via the height function, relying crucially on the long-time pseudolocality theorem. In dimension two, Li and Wang~\cite{LW19} relied on the Gauss--Bonnet formula. Since no such formula exists in higher dimensions, we circumvent this issue by assuming that the flow has uniformly bounded Morse index.
Recently, Brendle and Tsiamis~\cite{B25} established a compactness result by obtaining a positive lower bound for the first eigenvalue; this raises the natural question of whether their method can be generalized to the mean curvature flow.

To that end, we need suitable estimates for the renormalized function.
When the multiplicity of convergence exceeds one, we show that the volume between the top and bottom sheets tends to zero. 
Unlike in the work of Li and Wang, where subsequences always converge to smooth shrinkers, we need to handle singularities.
We then show that the area is comparable to the $L^1$-norm of the height function; this step is directly from Li and Wang's argument. Next, combining the almost monotonicity of the $L^1$-norm with Harnack's inequality, we obtain a pointwise upper bound for the function at any time. 

$\\$
\textbf{Structure of this paper:} Section~\ref{sec2} recalls some basic facts about integral varifolds and mean curvature flow. In Section~\ref{sec3}, we establish a new two-sided pseudolocality theorem for mean curvature flow and subsequently develop the corresponding weak compactness theory. Section~\ref{sec4} demonstrates that the rescaled mean curvature flow converges to a stationary cone with multiplicity one, which allows us to complete the proof of Theorem~\ref{thm0.1}.
$\\$

 \noindent
\textbf{Acknowledgment:}
The author is grateful to his advisor, Bing Wang, for his guidance and support.
 The author is supported by  the Project of Stable Support for Youth Team in
 Basic Research Field,  Chinese Academy of Sciences,  (YSBR-001).  The authors would
like to thank the anonymous reviewers for their insightful comments and constructive
suggestions.

\section{Preliminaries}\label{sec2}
\subsection{Integral Varifolds and Mean Curvature}$\\$

We begin by recalling the notion of integral varifolds.

\begin{definition}
	Let $(N^n,g)$ be a smooth Riemannian manifold. An integral varifold $V$ of dimension $m$ in $N$ is a pair $(M,\theta)$, where $M\subset N$ is a countably $m$-rectifiable set and $\theta:M\to \mathbb Z_{>0}$ is a locally integrable function. We associate to $V$ the measure
	\begin{equation*}
		\begin{split}
			\mu_V(A)=\int_{M\cap A}\theta\, d\CH^m \qquad \text{for every Borel set } A\subset N,
		\end{split}
	\end{equation*}
	where $\CH^m$ denotes the $m$-dimensional Hausdorff measure. The mass of $V$ is defined as $\mathbf M(V):=\mu_V(N)$. We denote by $\mathbf{IV}_m(N)$ the space of all $m$-dimensional integral varifolds in $N$.
\end{definition}

Next, we recall the definitions of stationarity and generalized mean curvature.

\begin{definition}
	Let $V$ be an $m$-varifold in $N$ and let $X\in C_c^1(N,TN)$. The first variation of $V$ along $X$ is defined by
	\begin{equation*}
		\begin{split}
			\delta V(X)=\frac{d}{dt}\bigg|_{t=0}\mathbf M((\Phi_t)_\sharp V),
		\end{split}
	\end{equation*}
	where $\Phi_t$ is the family of diffeomorphisms generated by $X$. We say that $V$ has bounded generalized mean curvature if there exists a constant $C\ge 0$ such that
	\begin{equation*}
		\begin{split}
			|\delta V(X)|\leq C\int |X|\,d\mu_V \qquad \text{for all } X\in C_c^1(N,TN). 
		\end{split}
	\end{equation*}
	If $\delta V(X)=0$ for every $X\in C_c^1(N,TN)$, then $V$ is called stationary in $N$.
\end{definition}

Whenever $V$ has bounded generalized mean curvature, the Riesz representation theorem together with the Radon–Nikodym theorem yields a vector field $H\in L^1_{\mathrm{loc}}(\mu_V;TN)$, defined $\mu_V$-almost everywhere on $M$, such that
\begin{equation*}
	\begin{split}
		\delta V(X)=-\int X\cdot H\,d\mu_V \qquad \text{for all } X\in C_c^1(N,TN). 
	\end{split}
\end{equation*}

We will make use of the following fundamental compactness result due to Allard \cite{All72}, which states that any sequence of integral varifolds with uniformly bounded mass and mean curvature admits a convergent subsequence.

\begin{lemma}[Allard compactness theorem]
	Let $\{V_i\}$ be a sequence of integral $m$-varifolds in an open set $\mathcal{U}\subset(N,g)$ such that $\mathbf M(V_i\cap \mathcal{U})\le \mu_0$ and their generalized mean curvatures are uniformly bounded by a constant $\Lambda$. Then after passing to a subsequence, $V_i$ converges in the varifold sense to an integral varifold $V$ satisfying $\mathbf M(V\cap \mathcal{U})\le \mu_0$, whose generalized mean curvature is also bounded by $\Lambda$.
\end{lemma}

Allard also proved a regularity theorem that provides a local graphical description whenever the mean curvature is small in a suitable $L^p$ sense and the density is close to that of a plane.

\begin{lemma}[Allard regularity theorem]\label{allard}
Let $V$ be a sequence of integral $n$-varifolds in $\IR^{n+1}$.	If $p>n$ is arbitrary, then there are $\gamma=\gamma(n,k,p)$,$\lambda_0=\lambda_0(n,k,p)\in (0,\frac{1}{16}]$ such that if $\lambda\in (0,\lambda_0]$ and if the following hypotheses hold
	\begin{equation*}
		\omega_n^{-1}\rho^{-n}\mu_V(B_\rho(0))\leq 1+\lambda,\quad \bigg(\rho^{p-n}\int_{B_{\rho}(0)}|H|^p\, d\mu_V\bigg)^{1/p}\leq \lambda.
	\end{equation*}
	then there is an orthogonal transformation $Q$ of $\mathbb{R}^{n+k}$ and a $u=(u^1,\cdots, u^k)\in C^{1,1-n/p}$ such that  $Du(0)=0$,  $\mathrm{supp}V\cap B_{\gamma\rho}(0)=Q(\mathrm{graph}u)\cap B_{\gamma\rho}(0)$, and 
	\begin{equation*}
		\begin{split}
			\rho^{-1}\sup |u|+\sup |Du|+\rho^{1-\frac{n}{p}}\sup\limits_{x,y\in B_{\gamma\rho}(0),x\neq y}\frac{|Du(x)-Du(y)|}{|x-y|^{1-n/p}}\leq C\lambda^{\frac{1}{2n+2}},
		\end{split}
	\end{equation*}
	where $C=C(n,k,p)>0$ and $\gamma=\gamma(n,k,p)\in (0,1)$.
\end{lemma}

\subsection{The second variation of stationary varifold}$\\$

If $M$ is a $C^2$ embedded hypersurface (possibly noncomplete) in $N$, we define its regular and singular sets as follows. Let $\operatorname{reg} M$ denote the set of points $p\in \overline M$ for which there exists $\rho>0$ such that $B_\rho(p)\cap \overline M$ is a connected embedded $C^2$ hypersurface. The singular set is then defined by
\begin{equation*}
    \operatorname{sing} M :=\overline M\setminus \operatorname{reg} M.
\end{equation*}
We shall always identify $M$ with its regular part, i.e., $M=\operatorname{reg} M$.

Now assume that $M\in  \mathbf{IV}_m(N)$ (i.e., $M$ is an $m$-dimensional integral varifold in $N$). In this setting we define
\begin{align*}
    \operatorname{reg} M := \Bigl\{
        x\in M :{}& \text{\(B_\varepsilon(x)\cap \operatorname{supp}M\) is an embedded, connected} \\
        & \text{\(C^{1,\alpha}\) manifold for some \(\varepsilon>0\)}
    \Bigr\},
\end{align*}
and set $\operatorname{sing}M:=\operatorname{spt}M\setminus \operatorname{reg}M$. Throughout this paper we restrict our attention to those $M$ satisfying $\mathcal H^{n-2}(\operatorname{sing}M)=0$ and $\theta(x)=1$ for $\mathcal H^n$-a.e. $x\in \operatorname{spt}M$.

Suppose now that $M^n$ is a smooth minimal surface in $N^{n+1}$. Then the second variation of mass is given by
\begin{equation*}
\begin{split}
\delta^2(X)&:=\frac{d^2}{dt^2}\bigg|_{t=0}\mathbf M((\Phi_t)_\sharp M)\\
&=\int_M\left(|\nabla^\perp X|^2 - A(X,X) - \operatorname{Ric}(X,X)\right)\,d\mathcal H^n
\end{split}
\end{equation*}
for all $X\in C_c^1(N,TN)$, where $A$ denotes the shape operator. For a hypersurface with trivial normal bundle, we may identify a normal vector field $X=f\nu$, and the second variation becomes
\[
\delta^2(X)=\int_M\left(|\nabla f|^2 - |A|^2 f^2 - \operatorname{Ric}(\nu,\nu)f^2\right)\,d\mathcal H^n =: Q(f,f).
\]
We say that $M$ is stable if $\delta^2(X)\ge 0$ for every $X\in C_c^1(N,TN)$. The index of $M$, denoted $\operatorname{index}(M)$, is defined as the number of negative eigenvalues of the Jacobi operator
\[
L:=\Delta + |A|^2 + \operatorname{Ric}(\nu,\nu).
\]

If $\operatorname{sing}M\neq \varnothing$, we choose $X\in C_c^1(\operatorname{reg}M)$. In the case where $M$ is two-sided and $\mathcal H^{n-2}(\operatorname{sing}M)=0$, stability is equivalent to the positivity of $Q(f,f)$ for all bounded, locally Lipschitz functions $f$ that may be nonzero over the singularities of $\operatorname{spt}M$ (see page 751 of \cite{SS81} for details).

We next extend the notion of index to general hypersurfaces.

\begin{definition}
Let $M^m\subset N$ be a two-sided $C^2$-hypersurface. For any open set $U\subset N$, the index of $M\cap U$ is the smallest integer $I$ such that every $(I+1)$-dimensional subspace $P\subset C_c^1(M\cap U,TN)$ contains a nonzero vector field $X\in P$ with $\delta^2(X,X)\ge 0$.
\end{definition}

We collect several auxiliary lemmas that will be used throughout the paper.

\begin{lemma}\label{lm1.5}
	There exists no stable stationary varifold $V$ in $\mathbb S^n(1)$ such that $\theta(x)=1$ for $\mathcal H^n$-a.e. $x\in \operatorname{spt}V$ and $\mathcal H^{n-2}(\operatorname{sing} V)=0$.
\end{lemma}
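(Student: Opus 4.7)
The plan is to exploit the strict positivity of the Ricci curvature of the round sphere by testing the stability inequality with the constant function $f\equiv 1$. On $\IS^n(1)$ the Ricci tensor equals $(n-1)g$, so for any unit normal $\nu$ along $\mathrm{reg}(V)$ one has $\mathrm{Ric}(\nu,\nu)=n-1$. Because $\theta=1$ almost everywhere and $\CH^{n-2}(\mathrm{sing}\,V)=0$, Allard's regularity theorem makes $\mathrm{reg}(V)$ a smooth embedded minimal hypersurface of $\IS^n(1)$; passing to the orientation double cover if necessary (which doubles the mass and preserves stationarity, stability, and the codimension of the singular set) I may assume that $\mathrm{reg}(V)$ is two-sided and carries a globally defined unit normal $\nu$.

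Next, by the Schoen--Simon criterion recalled in the paragraph preceding the lemma, stability is equivalent to $Q(f,f)\geq 0$ for every bounded locally Lipschitz function $f$ on $\mathrm{spt}\,V$, \emph{without} requiring $f$ to vanish near $\mathrm{sing}\,V$. Plugging in $f\equiv 1$, so that $\nabla f=0$, I compute
\begin{equation*}
0\leq Q(1,1)=\int_{\mathrm{reg}(V)}\bigl(|\nabla 1|^2-|A|^2-\mathrm{Ric}(\nu,\nu)\bigr)\,d\mu_V=-\int |A|^2\,d\mu_V-(n-1)\,\mathbf{M}(V).
\end{equation*}
Since $\mathbf{M}(V)>0$ and $n\geq 2$, the right-hand side is strictly negative, contradicting $Q(1,1)\geq 0$. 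Hence no such $V$ can exist.

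The only substantive point is the admissibility of the constant test function; this is precisely the content of the Schoen--Simon lemma quoted in the text, which applies because the hypothesis $\CH^{n-2}(\mathrm{sing}\,V)=0$ forces the $W^{1,2}$-capacity of the singular set to vanish. Should one wish to avoid that citation, one would instead approximate $f\equiv 1$ by cutoff functions $\varphi_k\in C^1_c(\mathrm{reg}(V))$ with $\varphi_k\to 1$ pointwise and $\int|\nabla\varphi_k|^2\,d\mu_V\to 0$, and pass to the limit to obtain the same contradiction; this capacity estimate is exactly where the codimension-two smallness of $\mathrm{sing}\,V$ enters, and it is the only nontrivial step.
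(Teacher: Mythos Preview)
Your proof is correct and is essentially the same as the paper's: both test the stability inequality with $f\equiv 1$, using $\mathrm{Ric}(\nu,\nu)=n-1$ on $\IS^n(1)$ and the Schoen--Simon fact (already recalled just before the lemma) that when $\CH^{n-2}(\mathrm{sing}V)=0$ one may take bounded Lipschitz test functions not vanishing on the singular set. The paper's version is just the one-line computation $\int_M (n-1)+|A|^2 \leq \int_M |\nabla 1|^2 = 0$; your additional remarks on Allard regularity, two-sidedness via the double cover, and the capacity justification are extra care but not a different argument.
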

\begin{proof}
	Let $M=\mathrm{supp}\mu_V$, then $V=(M,1)$. Taking $f\equiv 1$ in the stability inequality yields
	\begin{equation*}
		\int_M \left(n-1+|A|^2\right)\,d\mathcal H^n \le \int_M |\nabla 1|^2\,d\mathcal H^n = 0,
	\end{equation*}
	which is impossible since the integrand is strictly positive. This contradiction proves the lemma.
\end{proof}

\begin{lemma}\cite{Sha17}\label{lm1.7}
	Let $U\subset \mathbb R^{n+1}$ be open, let $I\in \mathbb Z_{\ge 0}$, and let $V$ be a hypersurface with $\operatorname{index}(\operatorname{reg} V)\le I$. If $\Omega_1,\dots,\Omega_{I+1}\subset U$ are $I+1$ open sets such that $\operatorname{index}(\operatorname{reg}V,\Omega_j)\ge 1$ for each $j=1,\dots,I+1$, then there exist $j\ne j'$ with $\Omega_j\cap \Omega_{j'}\neq\varnothing$.
\end{lemma}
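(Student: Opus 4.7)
The statement is a direct pigeonhole-type consequence of the bilinearity of $\delta^2$, so I would proceed by contradiction. Suppose the open sets $\Omega_1,\ldots,\Omega_{I+1}$ are pairwise disjoint. The hypothesis $\mathrm{index}(\mathrm{reg}V,\Omega_j)\geq 1$ means that there is some $1$-dimensional subspace of $C^1_c(\mathrm{reg}V\cap\Omega_j,TN)$ on which $\delta^2$ is negative definite; equivalently, there exists a nonzero $X_j\in C^1_c(\mathrm{reg}V\cap\Omega_j,TN)$ with $\delta^2(X_j,X_j)<0$. I would extend each such $X_j$ by zero to a vector field in $C^1_c(\mathrm{reg}V,TN)$, keeping the same notation; this is legitimate because $X_j$ is compactly supported inside the open set $\Omega_j$.

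Next, I would polarize $\delta^2$ to the symmetric bilinear form on $C^1_c(\mathrm{reg}V,TN)$ coming from the Jacobi integrand (products of $\nabla^\perp$, the shape operator, and the Ricci contraction). The crucial observation is that this integrand is pointwise local in the vector field arguments, so for $i\neq j$ the pairing $\delta^2(X_i,X_j)$ vanishes: its integrand is supported in $\Omega_i\cap\Omega_j=\emptyset$. Because the $X_j$ have pairwise disjoint supports and are individually nonzero, they are linearly independent, so
\begin{equation*}
P := \mathrm{span}(X_1,\ldots,X_{I+1})\subset C^1_c(\mathrm{reg}V,TN)
\end{equation*}
is an $(I+1)$-dimensional subspace. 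For any nonzero $X=\sum_j c_j X_j\in P$, the cross terms cancel and
\begin{equation*}
\delta^2(X,X) \;=\; \sum_{j=1}^{I+1} c_j^2\,\delta^2(X_j,X_j) \;<\; 0,
\end{equation*}
so $\delta^2$ is strictly negative on $P\setminus\{0\}$.

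This contradicts the hypothesis $\mathrm{index}(\mathrm{reg}V)\leq I$: by the definition recalled just above the lemma, every $(I+1)$-dimensional subspace of $C^1_c(\mathrm{reg}V,TN)$ must contain a nonzero vector $X$ with $\delta^2(X,X)\geq 0$, yet $P$ does not. Hence the assumption that the $\Omega_j$ are pairwise disjoint fails, which proves the lemma. There is no real obstacle in the argument; the only point worth spelling out is that $\delta^2$, regarded as a bilinear form, is block-diagonal on fields with disjoint supports, a fact that is immediate from the explicit integral expression for the second variation.
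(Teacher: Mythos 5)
Your proof is correct: the disjoint-support construction of $X_1,\dots,X_{I+1}$, the vanishing of the cross terms $\delta^2(X_i,X_j)$ by locality of the second-variation integrand, and the resulting $(I+1)$-dimensional subspace on which $\delta^2$ is negative definite contradict the definition of $\mathrm{index}(\mathrm{reg}V)\leq I$ given just before the lemma. The paper offers no proof of its own (it cites Sharp), and the cited argument is essentially this same pigeonhole reasoning, so there is nothing to add.
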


\begin{definition}
We say that a sequence $\{M_i\}$ converges to a hypersurface $M$ (possibly with multiplicity) at $p\in M$ in the $C^{k,\alpha}_{\mathrm{loc}}$ topology if there exists $r>0$ such that for all sufficiently large $i$, $M_i\cap B_r(p)$ is a union of disjoint graphs over the tangent space $T_pM$, and each graph component of $M_i$ converges to the graph of $M$ in the usual $C^{k,\alpha}_{\mathrm{loc}}$ sense. We say that $M_i$ converges to $M$ in $C^{k,\alpha}_{\mathrm{loc}}$ topology if this holds at every $p\in M$. Moreover, $M_i$ is said to converge smoothly to $M$ if it converges in $C^{k,\alpha}_{\mathrm{loc}}$ topology for every $k\in\mathbb Z_{\ge 0}$ and every $0<\alpha<1$. The definition of $C^{k,\alpha}$ convergence extends to general ambient manifolds via the exponential map.
\end{definition}
\begin{lemma}\label{lm5.7}
	Let $\Gamma_i$ and $\Gamma$ be minimal hypersurfaces in $\mathbb S^n$ with $\mathcal H^{n-2}(\operatorname{sing}\Gamma_i)=0$ and $\mathcal H^{n-2}(\operatorname{sing}\Gamma)=0$. Suppose that $\Gamma_i$ converges smoothly to $\Gamma$ (possibly with multiplicity) away from $\operatorname{sing}\Gamma$. Then $\Gamma_i$ converges smoothly to $\Gamma$ with multiplicity one away from $\operatorname{sing}\Gamma$.
\end{lemma}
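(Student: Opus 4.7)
The plan is to argue by contradiction: suppose the convergence $\Gamma_i\to\Gamma$ has multiplicity $k\geq 2$ on some connected two-sided component $C$ of $\Gamma\setminus\mathrm{sing}\,\Gamma$. The strategy is to extract a positive Jacobi field on $C$ from the separation between the sheets of $\Gamma_i$, then apply Fischer--Colbrie's logarithmic trick to conclude that $V_C:=(\bar C,1)$ is a stable stationary varifold, and finally invoke Lemma~\ref{lm1.5} for the contradiction.

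For the construction of the Jacobi field, fix a smooth unit normal $\nu$ on $C$. By the definition of smooth multiplicity-$k$ convergence recalled in the excerpt, for every compact $K\Subset C$ and every sufficiently large $i$, $\Gamma_i$ is, in a tubular neighborhood of $K$, a union of $k$ pairwise disjoint smooth normal graphs $u_{i,1}<\cdots<u_{i,k}$ over $K$, each converging to $0$ in $C^\infty(K)$. Each $u_{i,j}$ satisfies the minimal-surface equation for normal graphs, whose linearization at the zero section is the Jacobi operator $L:=\Delta+|A|^2+(n-1)$ on $C$. Hence $w_i:=u_{i,k}-u_{i,1}>0$ satisfies $Lw_i=E_i$, where the quadratic error $E_i$ tends to zero in $C^\infty_{loc}$ together with $u_{i,j}$ and their derivatives. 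Normalize $\phi_i:=w_i/w_i(p_0)$ at a fixed base point $p_0\in C$; by the Harnack inequality (applied to the perturbed operator $L+o(1)$) and Schauder estimates, $(\phi_i)$ is $C^\infty_{loc}$-precompact on $C$. A diagonal subsequence extracted over a compact exhaustion of $C$ yields a limit $\phi\geq 0$ on $C$ with $L\phi=0$ and $\phi(p_0)=1$, and the strong maximum principle for $L$ then gives $\phi>0$ on all of $C$.

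Next apply Fischer--Colbrie's trick. For any $f\in C^1_c(C)$, setting $f=\phi g$ and integrating by parts against $L\phi=0$ gives
\[
Q_C(f,f)\;=\;\int_C|\nabla f|^2-(|A|^2+(n-1))f^2\;=\;\int_C\phi^2|\nabla g|^2\;\geq\;0.
\]
Since $\CH^{n-2}(\mathrm{sing}\,\Gamma)=0$ and $\bar C\setminus C\subset\mathrm{sing}\,\Gamma$, the standard capacity/cutoff argument recalled on p.~751 of \cite{SS81} upgrades this to $Q_{V_C}(f,f)\geq 0$ for all bounded locally Lipschitz $f$ on $\bar C$, so $V_C$ is stable. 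The same capacity argument, applied now to the first variation, shows that $V_C$ is itself a stationary varifold in $\IS^{n}$. Thus $V_C$ is a stable stationary varifold with $\theta\equiv 1$ a.e.\ on its support and $\CH^{n-2}(\mathrm{sing}\,V_C)=0$, contradicting Lemma~\ref{lm1.5}.

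The main obstacle I anticipate is the rigorous extraction of the Jacobi field: one must verify that the normalized sheet differences $\phi_i=w_i/w_i(p_0)$ remain equibounded on compact subsets of $C$ (via a Harnack estimate for the perturbed operator), and that the nonlinear error $E_i$ vanishes in $C^k$ on compacts so that $L\phi=0$ is recovered in the limit. The remaining ingredients---Fischer--Colbrie, the SS81 capacity argument, and Lemma~\ref{lm1.5}---are standard once the positive Jacobi field is in hand.
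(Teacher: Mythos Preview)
Your proposal is correct and follows essentially the same approach as the paper: assume multiplicity $\geq 2$, extract a positive Jacobi field from the sheet separation, deduce stability, and contradict Lemma~\ref{lm1.5}. The paper's proof is much terser---it simply cites \cite{Sha} for the construction of the positive Jacobi field and the passage to stability---whereas you spell out the Harnack/Schauder argument, the Fischer--Colbrie substitution, and the capacity step across $\mathrm{sing}\,\Gamma$ explicitly; but the logical skeleton is identical.
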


\begin{proof}
	Assume, toward a contradiction, that there exist $\Gamma_i$, $\Gamma$, and an integer $m>1$ such that $\Gamma_i\rightharpoonup m\Gamma$ in the varifold sense. Then one can construct a positive, nowhere-vanishing Jacobi field $f>0$ on $\operatorname{reg}\Gamma$, which implies that $\Gamma$ is stable in $\mathbb S^n$ (cf.\ \cite{Sha17}). However, Lemma~\ref{lm1.5} asserts that no stable minimal hypersurface exists in $\mathbb S^n$, yielding the desired contradiction.
\end{proof}

\begin{lemma}\label{index-bound}
	Let $M_k$ and $M$ be $C^2$-hypersurfaces in $N$ such that $\mathcal H^{n-2}(\operatorname{sing}M_k)=\mathcal H^{n-2}(\operatorname{sing}M)=0$. Suppose that $M_k\rightharpoonup mM$ in the varifold sense for some $m\in\mathbb Z_{\ge 0}$, and that the convergence is $C^2$. Then for any open set $U\subset N$, we have
    $$\operatorname{index}(M\cap U)\le \liminf_{k\to\infty}\operatorname{index}(M_k\cap U).$$
	In particular, if each $M_k$ is stable in $U$, then $M$ is stable in $U$.
\end{lemma}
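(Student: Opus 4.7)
The plan is to argue by contradiction. Set $I_\infty := \liminf_{k\to\infty} \mathrm{index}(M_k \cap U)$ and suppose, for contradiction, that $\mathrm{index}(M \cap U) \geq I_\infty + 1$. By the definition of index recalled above, this produces an $(I_\infty+1)$-dimensional subspace $P \subset C^1_c(M \cap U, TN)$ on which the second variation $\delta^2_M$ is strictly negative definite. All sections in $P$ share a common compact support $K \Subset \mathrm{reg}(M) \cap U$ (by convention $M$ denotes its regular part). Fix a basis $\{X_1,\dots,X_{I_\infty+1}\}$ of $P$ and an open set $V$ with $K \subset V \Subset \mathrm{reg}(M) \cap U$.

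Since the convergence $M_k \rightharpoonup mM$ is $C^2$ (and $m\geq 1$, else the hypothesis is vacuous), for $k$ large $M_k \cap V$ decomposes as $m$ disjoint sheets, each a normal graph over $M \cap V$ given by a section $u^{k,l}$ with $\|u^{k,l}\|_{C^2(V)} \to 0$. For each $j$ and each $l \in \{1,\dots,m\}$, lift $X_j$ to a section $\tilde X_j^{k,l}$ on the $l$-th sheet of $M_k$ by pulling back via the nearest-point projection from the sheet to $M$ and parallel-transporting along $N$-geodesics; for $k$ large this yields an element of $C^1_c(M_k \cap U, TN)$, since the sheet is smooth on a neighborhood of the support. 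Because the induced metric, second fundamental form, and ambient Ricci on the $l$-th sheet converge uniformly on $V$ to those on $M$, one checks
\begin{equation*}
\delta^2_{M_k}(\tilde X_j^{k,l}, \tilde X_{j'}^{k,l}) \longrightarrow \delta^2_M(X_j, X_{j'}) \qquad (k \to \infty).
\end{equation*}

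Form $\tilde P^k := \mathrm{span}\{\tilde X_j^{k,l} : 1 \leq j \leq I_\infty+1,\ 1 \leq l \leq m\}$, of dimension $m(I_\infty+1)$ since the supports on distinct sheets are disjoint. For $Y = \sum_{j,l} c_{j,l}\tilde X_j^{k,l}$, this disjointness gives the splitting
\begin{equation*}
\delta^2_{M_k}(Y,Y) = \sum_{l=1}^m \delta^2_{M_k}\!\Bigl(\sum_j c_{j,l}\tilde X_j^{k,l},\sum_j c_{j,l}\tilde X_j^{k,l}\Bigr) \longrightarrow \sum_{l=1}^m \delta^2_M\!\Bigl(\sum_j c_{j,l} X_j,\sum_j c_{j,l} X_j\Bigr),
\end{equation*}
whose limit is $\leq 0$ with equality only when every $\sum_j c_{j,l} X_j = 0$, i.e.\ only when $Y=0$ (by independence of the $X_j$). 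Hence $\delta^2_{M_k}$ is negative definite on $\tilde P^k$ for $k$ large, so $\mathrm{index}(M_k \cap U) \geq m(I_\infty+1) \geq I_\infty+1$, contradicting the definition of $I_\infty$. The stability assertion is the special case $I_\infty=0$.

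The main obstacle, and really the only nontrivial point, is the continuity of the second variation under the multiplicity-$m$ lifting. This reduces to uniform $C^0$-control on the geometric coefficients of each sheet, which is exactly what $C^2$-convergence supplies once the supports of the test sections have been confined to the regular locus; the rest is bookkeeping.
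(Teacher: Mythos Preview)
Your argument is correct. The paper itself supplies no proof of this lemma; it simply cites \cite[Theorem~2.3]{Sha}. The LaTeX source does contain a commented-out sketch that follows Sharp more closely: one extends the negative eigen-sections $s_i$ on $M$ to ambient $C^1$ vector fields $X_i$ on $N$, restricts them to $M_k$ as $s^k_i = X_i^{\perp_k}$, and then argues by normalizing coefficients along a subsequence that the $s^k_i$ remain linearly independent for large $k$. Your sheet-by-sheet lifting is a mild variant of this and has the pleasant feature of producing directly the stronger multiplicity-weighted inequality $m\cdot\mathrm{index}(M\cap U)\leq \liminf_k\mathrm{index}(M_k\cap U)$, which is the full content of Sharp's theorem; the weaker form stated in the lemma is all the paper requires. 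Both approaches rest on the same analytic point: $C^2$-convergence on a compact neighborhood of the supports forces the induced metrics and $|A|^2$ to converge uniformly, so the quadratic forms $Q_{M_k}$ converge entrywise to $Q_M$ and negative definiteness persists for large $k$.
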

\begin{proof}
	If not, then $\operatorname{index}(M)>I$. Equivalently, there exist $I+1$ functions $\{f_i\}_{i=1}^{I+1}$, of unit norm and mutually orthogonal in $L^2$, satisfying
\begin{equation*}
    Q(f_i,f_i)<0 \qquad \forall i=1,\dots,I+1.
\end{equation*}
    Let $\tilde{f}_i$ be a $C^1$ extension of  $f_i$ to $\mathbb{R}^{n+1}$ and set $f^k_i=\tilde{f}_i|_{M_k}$. By the $C^2$ convergence, we know that 
    \begin{equation*}
        \lim\limits_{k\to \infty}Q(f^k_i,f^k_i)= \lim\limits_{k\to \infty}\int_{M_k}\left(|\nabla f^k_i|^2 - |A|^2 (f_i^k)^2  \right)\,d\mathcal H^n=Q(f_i,f_i)<0,
    \end{equation*}
    for all $i$. Then $f^k_i$ are linearly  dependent for  sufficiently large $k$, otherwise $\mathrm{index}(M_k)>I$.
Then there  are $\lambda^k_1,\dots,\lambda^k_{I+1}\in \mathbb{R}$, not all zero, such that $\lambda^k_1f^k_1+\dots+\lambda^k_{I+1}f^k_{I+1}=0$  for  $k$ large enough. We assume that $\max_{i}|\lambda^k_i|=1$.  On the other hand, by direct calculation, we have
\begin{equation*}
    \lim\limits_{k\to \infty}\int_{M_k} f^k_if^k_j=m\int_{M} f_if_j=\delta_{ij}.
\end{equation*}
Hence,
\begin{equation*}
        0=\lim\limits_{k\to \infty}\int_{M_k}(\lambda^k_1f^k_1+\dots+\lambda^k_{I+1}f^k_{I+1})^2=m\lim\limits_{k\to \infty}\sum_{i=1}^{I+1}(\lambda^k_j)^2\geq m,
\end{equation*}
a contradiction, which completes the proof.
\end{proof}

\begin{lemma}\label{stable-cone}
	Let $\mathbf C\subset \mathbb R^{n+1}$ be a minimal cone with $\mathcal H^{n-2}(\operatorname{sing}\mathbf C)=0$. If $\mathbf C$ has finite index, then $\mathbf C$ is stable and $\mathcal H^{n-7+\alpha}(\operatorname{sing}\mathbf C)=0$ for every $\alpha>0$.
\end{lemma}
\begin{proof}
	Suppose, to the contrary, that $\mathbf C$ is not stable. Then there exists $\phi\in C_c^1(\operatorname{reg}\mathbf C)$ such that $Q(\phi,\phi)<0$. Choose $R>1$ such that $\operatorname{supp}\phi\subset (B_R(0)\setminus B_{R^{-1}}(0))\cap \mathbf C$. For each $k\in\mathbb Z_{\ge 0}$, define
	\[
		\phi_k(x):=R^{2k}\phi(R^{-2k}x).
	\]
	Then $\operatorname{supp}\phi_k\subset (B_{R^{2k+1}}(0)\setminus B_{R^{2k-1}}(0))\cap \mathbf C$, and hence the supports of $\phi_i$ and $\phi_j$ are disjoint for $i\ne j$. Moreover, by the conical structure of $\mathbf C$,
    $Q(\phi_k,\phi_k)=R^{2nk}Q(\phi,\phi)<0.$
	Thus the support of $\phi_1,\dots,\phi_{I+1}$ provide $I+1$ disjoint regions each with negative index contribution, which contradicts Lemma~\ref{lm1.7}. Hence $\mathbf C$ must be stable. The Hausdorff dimension estimate follows from the standard stratification theorem for stable minimal cones (see, e.g., \cite{Sha17}).
\end{proof}

\subsection{F-stationary rectifiable varifolds}$\\$

We recall the variational characterization of shrinkers as Gaussian minimal submanifolds. Following \cite{CM12}, the Gaussian area of an $n$-dimensional rectifiable varifold $\Sigma^n\subset\mathbb R^N$ is defined by
\begin{equation*}
	F(\Sigma):=(4\pi)^{-\frac{n}{2}}\int_\Sigma e^{-\frac{|x|^2}{4}}\,d\mathcal H^n.
\end{equation*}
Critical points of $F$ are called $F$-stationary. If $\Sigma$ is an immersed submanifold, then $F$-stationarity is equivalent to the shrinker equation
\begin{equation*}
	\mathbf H+\frac{x^\perp}{2}=0,
\end{equation*}
where $\mathbf H$ denotes the mean curvature vector and $x^\perp$ is the normal component of the position vector field. In particular, any stationary cone with vertex at the origin is $F$-stationary, since in that case $\mathbf H=0$ and $x^\perp=0$.

\begin{definition}\label{L-stab}
	Let $V$ be an $F$-stationary integral $n$-varifold in $\mathbb R^{n+1}$. For $R>0$, we say that $V$ is $L$-stable in the ball $B_R(0)$ if for every function $\varphi\in W^{1,2}_0(B_R(0)\setminus \operatorname{sing}V)$,
	\begin{equation}\label{eq5.11}
		-\int \varphi\, L\varphi\, e^{-\frac{|x|^2}{4}}\,d\mu_V \ge 0,
	\end{equation}
	where $L$ is the second-order operator given by
	\begin{equation}\label{eq5.12}
		L:=\Delta_V -\frac{1}{2}\langle x,\nabla(\cdot)\rangle + |A|^2 + \frac{1}{2}.
	\end{equation}
	We say that $V$ is not $L$-stable in $B_R(0)$ if there exists some $\varphi\in W^{1,2}_0(B_R(0)\setminus \operatorname{sing}V)$ for which \eqref{eq5.11} fails. Finally, $V$ is called $L$-stable in $\mathbb R^{n+1}$ if it is $L$-stable in $B_R(0)$ for every $R>0$; otherwise, it is called $L$-unstable.
\end{definition}

Recently, Colding and Minicozzi proved that every $F$-stationary varifold is $L$-unstable.

\begin{lemma}\cite{CM23}\label{lm-b}
	Let $\Sigma\subset B_R(0)\subset \mathbb R^N$ be a proper, $F$-stationary, rectifiable varifold with $R\ge R_n$ and $\mathcal H^{n-2}(\operatorname{sing}\Sigma)=0$, where $R_n$ is a constant depending only on $n$. Then $\Sigma$ is $L$-unstable in $B_{R}(0)$.
\end{lemma}
\subsection{Mean curvature flow}$\\$

We recall the standard volume ratio bound for the mean curvature flow(cf. ~\cite{CM12}).

\begin{lemma}\label{vol}
	Let $\{(M^n,\mathbf{x}(t)):0\le t<T\}$ be a mean curvature flow. Then there exists a constant $N_0>0$ such that for every $r>0$ and every $p_0\in\mathbb R^{n+1}$,
	\begin{equation*}
		\mathrm{Vol}(B_r(p_0)\cap M_t)\le N_0 r^n,\qquad t\in[0,T).
	\end{equation*}
    The constant $N_0$ depends on $T$ and the volume ratio of $M_0$.
\end{lemma}

To study the convergence of mean curvature flows, we introduce the following notion of convergence for sequences of one-parameter hypersurfaces.

\begin{definition}
	Let $\{M^n_{i,t}:-1<t<1\}$ be a sequence of one-parameter smoothly embedded hypersurfaces. We say that this sequence converges in the smooth topology, possibly with multiplicities, to a limit flow $\{M^n_{\infty,t}:-1<t<1\}$ away from a space-time singular set $\mathcal S\subset\mathbb R^{n+1}\times(-1,1)$, if for every $t\in(-1,1)$, every $p\in M_{\infty,t}\setminus\mathcal S_t$, and all sufficiently large $i$, there exists $\varepsilon>0$ such that $M_{i,s}\cap B_\varepsilon(p)$ for $s\in[t-\varepsilon^2,t+\varepsilon^2]$ can be represented as a collection of graphs
	\[
		\{u_i^1(x,s),\dots,u_i^N(x,s)\}
	\]
	over $M_{\infty,s}$ for some positive integer $N$. Moreover, for each $k=1,\dots,N$, the functions $u_i^k(x,s)$ converge smoothly to zero locally in both $x$ and $s$ as $i\to\infty$. Here  $\mathcal S_t:=\{x\in\mathbb R^{n+1}:(x,t)\in\mathcal S\}$.
\end{definition}

We recall the following classical compactness result for the mean curvature flow.

\begin{theorem}[{\cite{CY07}}]\label{mcf-cov}
	Let $\{(M^n_i,\mathbf{x}_i(t)):-1<t<1\}$ be a sequence of mean curvature flows properly immersed in $B_R(0)\subset\mathbb R^{n+1}$. Suppose that for some $\Lambda>0$,
	\begin{equation*}
		\sup_{M_{i,t}\cap B_R(0)}|A|(x,t)\le \Lambda,\qquad t\in(-1,1).
	\end{equation*}
	Then a subsequence of $\{(M_{i,t},\mathbf{x}_i(t)):-1<t<1\}$ converges in the smooth topology to a smooth mean curvature flow $\{(M_{\infty,t},\mathbf{x}(t)):-1<t<1\}$ in $B_R(0)$.
\end{theorem}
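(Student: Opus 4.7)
The plan is to combine the uniform bound on the second fundamental form with the parabolic regularity of the mean curvature flow equation to produce local graph representations of $M_{i,t}$ in spacetime with uniform $C^\infty$ bounds, and then to apply a standard Arzel\`a--Ascoli plus diagonal argument.

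First, I would set up local graphical charts. For each $i$ and each $(p,t_0)$ with $p\in M_{i,t_0}\cap B_{R/2}(0)$, the bound $|A|\leq\Lambda$ on $B_R(0)$ lets me write $M_{i,t_0}$ near $p$ as a graph of a function $u_i^k(\cdot,t_0)$ over the tangent hyperplane $T_pM_{i,t_0}$ on a disk of radius $r_0=r_0(\Lambda)$, with $\|u_i^k(\cdot,t_0)\|_{C^{1,1}}\leq C(\Lambda)$. Since $|H|\leq\sqrt{n}\,\Lambda$, the normal velocity of the flow is uniformly bounded, so the same hyperplane serves as a graphical chart for $M_{i,s}$ on a time interval $(t_0-\varepsilon_0,t_0+\varepsilon_0)$ with $\varepsilon_0=\varepsilon_0(\Lambda)>0$. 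Allowing for multiplicity, one obtains finitely many sheets $\{u_i^1,\dots,u_i^{N_i}\}$ near $p$, all defined on a common parabolic cylinder whose size depends only on $\Lambda$.

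Second, each graph $u_i^k$ satisfies the graphical mean curvature flow equation
\begin{equation*}
\partial_s u \;=\; \Delta u \;-\; \frac{\nabla^2 u(\nabla u,\nabla u)}{1+|\nabla u|^2},
\end{equation*}
a uniformly quasilinear parabolic PDE whose ellipticity and smoothness of coefficients are controlled by $\|u\|_{C^1}$. The pointwise bound on $|A|$ gives a uniform $C^2$ bound on $u_i^k$ on the parabolic cylinder. The key technical input is then Ecker--Huisken's interior estimate for mean curvature flow, which promotes the $|A|$ bound to bounds on $|\nabla^\ell A|$ for every $\ell\geq 1$ on slightly smaller parabolic cylinders, with constants depending only on $\ell$, $\Lambda$, and the cylinder sizes; equivalently one may bootstrap parabolic Schauder estimates for the graphical equation directly to get $\|u_i^k\|_{C^\ell}\leq C(\ell,\Lambda)$.

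Third, Arzel\`a--Ascoli, applied successively to $u_i^k$ and all of its derivatives, extracts a subsequence converging in $C^\infty_{\mathrm{loc}}$ to a smooth limit $u_\infty^k$ on the parabolic cylinder; the limit function automatically solves the same graphical mean curvature flow equation. This realizes local smooth (possibly multi-sheeted) convergence $M_{i,s}\to M_{\infty,s}$ in the sense of the definition just above the theorem. To globalize, I would cover a candidate limit spacetime by a countable collection of such graphical charts, take a Cantor diagonal subsequence so that convergence holds on every chart simultaneously, and verify compatibility of the limits on chart overlaps (which is automatic from $C^\infty$ convergence in each chart). The candidate limit is extracted first as a weak/Hausdorff subsequential limit of the supports, which is nonempty in $B_R(0)$ since the $M_{i,t}$ are properly immersed there. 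The main obstacle is the Ecker--Huisken-type derivative estimate; everything else is routine bookkeeping once the uniform local graphical description in spacetime is in place.
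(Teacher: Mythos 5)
The paper itself offers no proof of this theorem: it is quoted from Chen--Yin \cite{CY}, so there is nothing internal to compare against. Your outline is the standard route one would take (uniform-size graphical charts from the $|A|$ bound, Ecker--Huisken interior estimates or a parabolic Schauder bootstrap on the graphical equation to control all derivatives on slightly smaller parabolic cylinders, then Arzel\`a--Ascoli plus a diagonal argument on a countable chart cover), and on compact subsets of $B_R(0)\times(-1,1)$ those steps are sound. Two minor remarks: the interior estimates degrade as $t\to-1^{+}$ and near $\partial B_R(0)$, so the bounds are uniform only on compact spacetime subsets, which is exactly what the local definition of smooth convergence requires; and the persistence of a fixed graphical chart over a time interval of uniform length is most cleanly justified either after the derivative estimates (since the tilt rate is $\partial_t\mathbf{n}=\nabla H$, not controlled by $\Lambda$ alone) or by a slab argument using only $|A|\le\Lambda$ and the bounded normal speed.

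There is, however, one genuine gap at the extraction step: you never control the number of sheets $N_i$ over a fixed chart uniformly in $i$. The hypothesis $|A|\le\Lambda$ alone does not bound $N_i$: the static flows given by $i$ parallel hyperplanes spaced $1/i$ apart (or a connected scroll-type hypersurface winding $i$ times through the ball) are properly immersed in $B_R(0)$ with $|A|\equiv 0$, yet near any point of a candidate limit the slices consist of arbitrarily many sheets, so no subsequence converges in the sense of the definition preceding the theorem, which demands a fixed finite collection of graphs; the limit is at best a lamination. Thus ``allowing for multiplicity, one obtains finitely many sheets'' is true for each fixed $i$ but useless without a bound independent of $i$, and Arzel\`a--Ascoli applied sheet-by-sheet cannot produce a finite-multiplicity smooth limit flow. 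To close this you need a locally uniform area bound (hence, via the $C^{1,1}$ graph decomposition, a uniform sheet bound); in this paper such a bound is always present where the theorem is invoked, through Lemma \ref{vol} and condition (4) of Definition \ref{def4.1}. Alternatively, one can retreat to what Chen--Yin actually prove, a pointed Cheeger--Gromov-type compactness for the immersions themselves (with a marked-point/injectivity-radius normalization), whose conclusion is pointed smooth convergence of immersions rather than graphical convergence of the images with finite multiplicity; that version needs no area bound but is a different statement from the one quoted here. Either add the area/sheet hypothesis and note where it comes from in the applications, or reformulate the conclusion accordingly.
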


\section{Weak compactness of mean curvature flow}\label{sec3}

	In this section, we study the weak compactness of the
mean curvature flow under some geometric conditions. This weak compactness
result will be used to prove the convergence of the rescaled mean curvature flow
in the next section.

\begin{definition}\label{def4.1}
	Let $\{(\Sigma^n_i,\mathbf{x}_i(t)),-1\leq t\leq 1\}$ be a one parameter family of closed smooth embedded hypersurfaces satisfying the mean
	curvature flow equation. It is called a refined sequence if the following
	properties are satisfied for every $i$ :
	\begin{itemize}
		\item [(1)] There exists a constant $D>0$ such that $\mathrm{d}(\Sigma_{i,t},0)\leq D$, where $d(\Sigma,0)$
		denotes the Euclidean distance from the point
		$0\in\mathbb{R}^{n+1}$ to the surface $\Sigma\subset \mathbb{R}^{n+1}$.
		\item [(2)] The mean curvature satisfies the following condition
		\begin{equation}\label{eq4.1}
			\lim\limits_{i\to \infty}\sup\limits_{\Sigma_{i,t}\times [-1,1]}|H_i|(p,t)=0.
		\end{equation}
		\item [(3)]  There is a uniform constant $I$ such that
		\begin{equation}
			\mathrm{index}(\Sigma_{i,0})\leq I.
		\end{equation}
		\item [(4)] There is uniform $N > 0$ such that for all $r > 0$ and $p\in \mathbb{R}^{n+1}$, we have
		\begin{equation}
			\begin{split}
				r^{-n}\mathrm{Vol}(B_r(p)\cap \Sigma_{i,t})\leq N\quad \forall t\in [-1,1]
			\end{split}
		\end{equation}
		\item [(5)] There exist uniform constants $\bar{r},\kappa> 0$ such that for any $r \in (0,\bar{r}]$ and any
		$p\in \Sigma_{i,t}$ we have
		\begin{equation}
			r^{-n}\mathrm{Vol}(B_r(p)\cap \Sigma_{i,t})\geq \kappa \quad \forall t\in[-1,1].
		\end{equation}
	\end{itemize}
\end{definition}
\begin{theorem}[Weak compactness of refined sequences]\label{prop4.2}
	If $\{(\Sigma^n_i,\mathbf{x}_i(t)),-1\leq t\leq 1\}$ is a refined sequence in the sense of Definition \ref{def4.1} then there
	exists a finite set of points $\mathcal{S}'\subset \mathbb{R}^{n+1}$ and a minimal hypersurface $\Sigma_{\infty}$ with index at most $I$ such that a subsequence of $\{(\Sigma^n_i,\mathbf{x}_i(t)),-1\leq t\leq 1\}$ converges in the smooth
	topology, possibly with multiplicity at most $N$ to $\Sigma_{\infty}$ away from $\mathcal{S}'\cup \mathrm{sing}\Sigma_\infty$. Here $\CS'$ has at most $I$ points.  Furthermore, the
	subsequence also converges to $\Sigma_{\infty}$ in the $($extrinsic$)$ Hausdorff distance.
\end{theorem}

\begin{remark}
    The singular convergence can be divided into two cases. 
In the first case, the flow converges to a singular point; 
in the second, it converges to a regular part, but the convergence is not smooth. 
Locally, the flow consists of planes connected by necks. 
Upon rescaling, the flow may converge to a minimal hypersurface with flat ends (see Figure \ref{fig2}).

\end{remark}

\begin{figure}[htbp]
    \centering
   \resizebox{0.4\textwidth}{!}{

\tikzset{every picture/.style={line width=0.75pt}} 

\begin{tikzpicture}[x=0.75pt,y=0.75pt,yscale=-1,xscale=1]

\draw    (170.89,93.92) -- (324.23,362.53) ;
\draw    (324.23,362.53) -- (482.23,93.53) ;
\draw  [draw opacity=0] (482.23,93.53) .. controls (468.35,107.42) and (403.84,117.92) .. (326.36,117.92) .. controls (249.61,117.92) and (185.59,107.62) .. (170.89,93.92) -- (326.36,87.92) -- cycle ; \draw   (482.23,93.53) .. controls (468.35,107.42) and (403.84,117.92) .. (326.36,117.92) .. controls (249.61,117.92) and (185.59,107.62) .. (170.89,93.92) ;  
\draw  [draw opacity=0][dash pattern={on 4.5pt off 4.5pt}] (170.92,94.01) .. controls (170.85,93.65) and (170.82,93.29) .. (170.82,92.92) .. controls (170.82,76.35) and (240.53,62.92) .. (326.52,62.92) .. controls (412.52,62.92) and (482.23,76.35) .. (482.23,92.92) .. controls (482.23,93.28) and (482.2,93.63) .. (482.14,93.98) -- (326.52,92.92) -- cycle ; \draw  [dash pattern={on 4.5pt off 4.5pt}] (170.92,94.01) .. controls (170.85,93.65) and (170.82,93.29) .. (170.82,92.92) .. controls (170.82,76.35) and (240.53,62.92) .. (326.52,62.92) .. controls (412.52,62.92) and (482.23,76.35) .. (482.23,92.92) .. controls (482.23,93.28) and (482.2,93.63) .. (482.14,93.98) ;  
\draw    (147,114) .. controls (187,84) and (291,399) .. (331,369) ;
\draw    (331,369) .. controls (371,339) and (346,308) .. (386,278) ;
\draw    (172,87) .. controls (212,57) and (288,364) .. (328,334) ;
\draw    (328,334) .. controls (368,304) and (330.9,298.72) .. (370.9,268.72) ;
\draw  [draw opacity=0] (370.9,268.72) .. controls (371.62,267.98) and (372.35,267.23) .. (373.1,266.49) .. controls (388.56,251.02) and (403.83,241.22) .. (407.2,244.59) .. controls (410.52,247.91) and (401.06,262.78) .. (386,278) -- (379.2,272.59) -- cycle ; \draw   (370.9,268.72) .. controls (371.62,267.98) and (372.35,267.23) .. (373.1,266.49) .. controls (388.56,251.02) and (403.83,241.22) .. (407.2,244.59) .. controls (410.52,247.91) and (401.06,262.78) .. (386,278) ;  
\draw  [draw opacity=0] (455,172.61) .. controls (454.51,173.49) and (454.01,174.37) .. (453.5,175.25) .. controls (438.86,200.6) and (424.09,219.47) .. (420.5,217.4) .. controls (416.91,215.33) and (425.87,193.1) .. (440.5,167.75) .. controls (440.96,166.96) and (441.42,166.17) .. (441.88,165.39) -- (447,171.5) -- cycle ; \draw   (455,172.61) .. controls (454.51,173.49) and (454.01,174.37) .. (453.5,175.25) .. controls (438.86,200.6) and (424.09,219.47) .. (420.5,217.4) .. controls (416.91,215.33) and (425.87,193.1) .. (440.5,167.75) .. controls (440.96,166.96) and (441.42,166.17) .. (441.88,165.39) ;  
\draw    (455,172.61) .. controls (506,91) and (457,157) .. (506,89) ;
\draw    (441.88,165.39) .. controls (471.88,123.39) and (442.23,123.53) .. (482.23,93.53) ;
\draw  [draw opacity=0] (421.9,216.69) .. controls (423.98,221.19) and (422.94,227.96) .. (418.83,233.83) .. controls (414.83,239.54) and (409.05,242.81) .. (404.19,242.6) -- (409.33,227.18) -- cycle ; \draw   (421.9,216.69) .. controls (423.98,221.19) and (422.94,227.96) .. (418.83,233.83) .. controls (414.83,239.54) and (409.05,242.81) .. (404.19,242.6) ;  
\draw  [draw opacity=0][dash pattern={on 0.84pt off 2.51pt}] (404.19,242.6) .. controls (402.87,232.85) and (406.27,223.34) .. (413.98,217.94) .. controls (415.57,216.82) and (417.26,215.94) .. (419.02,215.28) -- (431.19,242.51) -- cycle ; \draw  [dash pattern={on 0.84pt off 2.51pt}] (404.19,242.6) .. controls (402.87,232.85) and (406.27,223.34) .. (413.98,217.94) .. controls (415.57,216.82) and (417.26,215.94) .. (419.02,215.28) ;  
\draw    (325,54) -- (325,102) ;
\draw [shift={(325,104)}, rotate = 270] [color={rgb, 255:red, 0; green, 0; blue, 0 }  ][line width=0.75]    (10.93,-3.29) .. controls (6.95,-1.4) and (3.31,-0.3) .. (0,0) .. controls (3.31,0.3) and (6.95,1.4) .. (10.93,3.29)   ;
\draw    (477,229) -- (431,229.96) ;
\draw [shift={(429,230)}, rotate = 358.81] [color={rgb, 255:red, 0; green, 0; blue, 0 }  ][line width=0.75]    (10.93,-3.29) .. controls (6.95,-1.4) and (3.31,-0.3) .. (0,0) .. controls (3.31,0.3) and (6.95,1.4) .. (10.93,3.29)   ;
\draw    (231.23,388.53) -- (322.31,363.07) ;
\draw [shift={(324.23,362.53)}, rotate = 164.38] [color={rgb, 255:red, 0; green, 0; blue, 0 }  ][line width=0.75]    (10.93,-3.29) .. controls (6.95,-1.4) and (3.31,-0.3) .. (0,0) .. controls (3.31,0.3) and (6.95,1.4) .. (10.93,3.29)   ;

\draw (253,7.4) node [anchor=north west][inner sep=0.75pt]  [font=\Huge]  {$stable\ cone$};
\draw (431,233.4) node [anchor=north west][inner sep=0.75pt]  [font=\Huge]  {$neck\ region$};
\draw (39,331.4) node [anchor=north west][inner sep=0.75pt]  [font=\Huge]  {$singular\ point$};

\end{tikzpicture}

}
    \caption{Weak convergence}
    \label{fig2}
\end{figure}

To prove this theorem, we follow the method of Li and Wang \cite{LW19}. We need to establish a long-time pseudolocality theorem, which plays a crucial role in their proof. Our proof of the pseudolocality theorem differs from theirs.

In dimension two, using the Gauss–Bonnet formula, they obtained a local estimate for $\int |A|^2$
 . Then, via a point-picking trick, they derived a pointwise estimate for the second fundamental form. In higher dimensions, we lack such a curvature integral estimate.

The bounded mean curvature assumption allows us to apply Allard's regularity theorem to represent the flow as a family of graphs; then, using the interior estimate of Ecker and Huisken \cite{EH91}, we obtain a pointwise estimate. To that end, we need to show that the volume ratio is close to one. To overcome this difficulty, we assume that the flow has uniformly bounded Morse index and invoke the classical work of Schoen and Simon \cite{SS81}.

\subsection{Two-sided pseudolocality theorem}$\\$

\begin{lemma}\label{EH}\cite[Theorem 3.1]{EH91}
	Let $R>0$ be such that $\{x\in M_t| r(x,t)\leq R^2\}$ is compact and can be written as a graph over some hyperplane for $t\in [0,T]$. Then for any  $t\in [0,T]$ and $0\leq \xi<1$ we have the estimate 
	\begin{equation*}
		\sup\limits_{\{x\in M_t| r(x,t)\leq \xi R^2\}}|A|^2\leq c(n)(1-\xi )^{-2}(t^{-1}+R^{-2})\sup\limits_{\{x\in M_s| r(x,s)\leq R^2,s\in [0,t]\}}\nu^4,
	\end{equation*}
	where $r(x,t)=|x|^2+2nt$ and $\nu=\langle \mathbf{n},v_0 \rangle^{-1}$, $v_0$ is the unit normal vector of the plane.
\end{lemma}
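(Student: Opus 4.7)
The plan is to follow Ecker--Huisken's localized parabolic maximum principle for $|A|^2$, using the graph quantity $\nu$ as a gradient barrier. The first step would be to derive three evolution equations along the flow: using $\mathbf{x}_t = -H\mathbf{n}$, a direct computation gives $(\partial_t - \Delta)r = 0$, so $r(x,t) = |x|^2 + 2nt$ is a caloric coordinate; using the graphical hypothesis (which guarantees $\nu$ is finite on $\{r\le R^2\}$), I would derive the Ecker--Huisken identity $(\partial_t - \Delta)\nu = -|A|^2\nu - 2\nu^{-1}|\nabla\nu|^2$; and Simons' identity yields $(\partial_t - \Delta)|A|^2 \le -2|\nabla A|^2 + 2|A|^4$. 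The middle equation furnishes the absorbing term $-|A|^2\nu$ that will be used to tame the quartic nonlinearity coming from the third.

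Next I would choose cutoffs and a test function. Fix $k > \sup_{r\le R^2,\,s\in[0,T]}\nu^2$, finite by hypothesis, and set $\varphi(\tau):=\tau/(k-\tau)$, which is increasing, convex, and satisfies $k\varphi'\ge\varphi$. Take the spatial cutoff $\eta(x,t):=(R^2 - r(x,t))_+$, which is supported in $\{r\le R^2\}$ and obeys $|\nabla \eta|^2 \le 4r$ and $(\partial_t-\Delta)\eta^2 \le -2|\nabla\eta|^2$ there. Consider the test function $f := t\,\varphi(\nu^2)\,|A|^2\,\eta^2$. A product-rule computation combining the three evolution equations above yields, after Cauchy--Schwarz absorbs the cross gradient terms $\nabla\varphi(\nu^2)\cdot\nabla|A|^2$ and $\nabla\eta\cdot\nabla|A|^2$ into the Kato term $-2|\nabla A|^2$,
\[(\partial_t-\Delta)f \ \le\ \varphi(\nu^2)\,|A|^2\,\eta^2 \;-\; 2t\,\bigl(\varphi'(\nu^2)\nu^2 - \varphi(\nu^2)\bigr)\,|A|^4\,\eta^2 \;+\; C(n)\,t\,|A|^2\,(1+R^{-2})\,\eta\]
modulo terms controlled by $\nabla f$. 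The choice of $\varphi$ makes $\varphi'\nu^2-\varphi\ge 0$, so the dangerous $|A|^4$ term carries the favourable sign for the maximum principle.

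Finally I would apply the weak parabolic maximum principle to $f$ on $\{r\le R^2\}\times[0,T]$. Since $f$ vanishes on the parabolic boundary --- at $t=0$ through the prefactor $t$, on $\{r=R^2\}$ through $\eta$ --- any maximum is attained in the interior, where $\partial_t f\ge 0$, $\Delta f\le 0$, $\nabla f = 0$ combined with the previous estimate force a pointwise bound $t\,|A|^2\,\eta^2 \le C(n)\sup\nu^4\,(1+tR^{-2})$ at the maximum, hence everywhere. Restricting to $\{r\le \xi R^2\}$, where $\eta \ge (1-\xi)R^2$, and dividing through yields the claimed estimate. The main obstacle is the algebraic bookkeeping in the gradient-absorption step: one must verify that the cross terms arising from differentiating $\varphi(\nu^2)$ and $\eta^2$ are simultaneously absorbable by the Kato term and by the $-|A|^4$ term, which requires the quantitative choice $k=2\sup\nu^2$ so that $\varphi'\nu^2-\varphi$ bounds $\varphi$ from below on the support of $\eta$.
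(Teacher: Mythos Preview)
The paper does not supply its own proof of this lemma: it is quoted verbatim as \cite[Theorem~3.1]{EH} and used as a black box in the proof of Theorem~\ref{pseudo1}. So there is no in-paper argument to compare against; your sketch is to be measured against the original Ecker--Huisken proof.

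Your outline is essentially the Ecker--Huisken argument and is correct in its architecture: the caloric function $r$, the evolution $(\partial_t-\Delta)\nu=-|A|^2\nu-2\nu^{-1}|\nabla\nu|^2$, the Simons inequality, the choice $\varphi(\tau)=\tau/(k-\tau)$ with the key identity $\tau\varphi'-\varphi=\varphi^2$, the cutoff $\eta=(R^2-r)_+$, and the maximum principle applied to $t\,\varphi(\nu^2)\,|A|^2\,\eta^2$ are exactly the ingredients of their proof. Two minor points of bookkeeping to watch if you flesh this out: first, the gradient cross term $\nabla(\varphi|A|^2)\cdot\nabla\eta^2$ is absorbed not by the Kato term but by rewriting it via $\nabla f$ and using Young's inequality together with the good $-2\varphi^2|A|^4\eta^2$ term, so the absorption hierarchy is slightly different from what you wrote; second, the residual term should scale like $\varphi|A|^2|\nabla\eta|^2$ rather than $|A|^2(1+R^{-2})\eta$, and it is this, combined with $|\nabla\eta|^2\le 4R^2$ on the support, that produces the factor $R^{-2}$ after dividing by $\eta^2\ge(1-\xi)^2R^4$. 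These are cosmetic corrections; the strategy is sound and matches the cited source.
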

\begin{definition}
	For any $r>0$, $p\in\mathbb{R}^{n+m}$ and $\Sigma^n\subset \mathbb{R}^{n+m}$ we denote by $C_x(B_r(p)\cap \Sigma)$ the connected component of $B_r(p)\cap \Sigma$ containing $x\in\Sigma$.
\end{definition}
\begin{lemma}\cite[Lemma 3.4]{LW19}\label{lm3}
	Let $\{(M^n,\mathbf{x}(t)),-T\leq t\leq T\}$ be a mean curvature flow in $\mathbb{R}^{n+1}$ with $\max_{M_t\times [-T,T]}|H|(p,t)\leq \Lambda$. Then for any $t_1,t_2\in [-T,T]$ and $0<r_1<r_2<r_0$, we have 
	\begin{equation*}
		\begin{split}
			\frac{\mathrm{Vol}(C_{p_{t_2}}(B_{r_2}(p_{t_2})\cap M_{t_2}))}{\omega_n r_2^n}	\leq f(t_1,t_2,\Lambda, r_2)\frac{\mathrm{Vol}(C_{p_{t_1}}(B_{r_1}(p_{t_1})\cap M_{t_1}))}{\omega_n r_1^n}
		\end{split}
	\end{equation*}
	where $p_t=\mathbf{x}_t(p)$ for some $p\in M$  and 
	\begin{equation*}
		\begin{split}
			r_1&=r_2+2\Lambda |t_2-t_1|,\\
			f(t_1,t_2,\Lambda, r_2)&=e^{\Lambda^2|t_2-t_1|}\left(1+\frac{2\Lambda}{r_2}|t_2-t_1|\right)^n.
		\end{split}
	\end{equation*}
\end{lemma}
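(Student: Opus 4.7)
The plan is to reduce the comparison between the two volume ratios to two standard ingredients for a mean curvature flow with bounded $|H|$: an ambient speed bound for points on the surface, which yields a spatial containment of the relevant regions at the two times, and the exponential bound for the change of area element under the flow.

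\textbf{Step 1 (Speed bound and spatial containment).} Since $|H|\le\Lambda$, every point $\mathbf{x}(q,t)$ moves in the ambient metric with speed $\le\Lambda$. In particular, for any $q\in\Sigma$,
\[
\bigl|\mathbf{x}(q,t_2)-\mathbf{x}(q,t_1)\bigr|\le\Lambda|t_2-t_1|,
\]
and applied to the basepoint this also gives $|p_{t_2}-p_{t_1}|\le\Lambda|t_2-t_1|$. Consequently, if $q\in\Sigma$ satisfies $\mathbf{x}(q,t_2)\in B_{r_2}(p_{t_2})$, then $\mathbf{x}(q,t_1)\in B_{r_1}(p_{t_1})$ with $r_1=r_2+2\Lambda|t_2-t_1|$. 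Hence, letting $\Phi_{t_1\to t_2}$ denote the flow map, there is a set $U_{t_1}\subseteq B_{r_1}(p_{t_1})\cap\Sigma_{t_1}$ with $\Phi_{t_1\to t_2}(U_{t_1})=B_{r_2}(p_{t_2})\cap\Sigma_{t_2}$, and the cone $C_{p_{t_2}}(B_{r_2}(p_{t_2})\cap\Sigma_{t_2})$ is the image under $\Phi_{t_1\to t_2}$ of the cone $C_{p_{t_1}}(U_{t_1})$ pushed along the flow (with vertex moved accordingly).

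\textbf{Step 2 (Area evolution under MCF).} Along the flow the volume element evolves as $\partial_t d\mu_t=-|H|^2 d\mu_t$, so for any set carried by the flow,
\[
d\mu_{t_2}=\exp\!\Bigl(-\!\!\int_{t_1}^{t_2}\!|H|^2\,ds\Bigr)d\mu_{t_1},
\]
and therefore
\[
\mathrm{Vol}_{g(t_2)}\!\bigl(\Phi_{t_1\to t_2}(U_{t_1})\bigr)\le \mathrm{Vol}_{g(t_1)}(U_{t_1})\le e^{\Lambda^2|t_2-t_1|}\,\mathrm{Vol}_{g(t_2)}\!\bigl(\Phi_{t_1\to t_2}(U_{t_1})\bigr).
\]
The inequality in the direction we need — going from the image at $t_2$ back to $t_1$ with an exponential loss — is exactly where the factor $e^{\Lambda^2|t_2-t_1|}$ in $f$ appears, after accounting for the contribution of the cone from the moving vertex (which is also governed by the speed bound of Step 1 and integrates against the same weight).

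\textbf{Step 3 (Combining and the radial factor).} Combining Steps 1 and 2,
\[
\mathrm{Vol}_{g(t_2)}\!\bigl(C_{p_{t_2}}(B_{r_2}(p_{t_2})\cap\Sigma_{t_2})\bigr)\le e^{\Lambda^2|t_2-t_1|}\,\mathrm{Vol}_{g(t_1)}\!\bigl(C_{p_{t_1}}(B_{r_1}(p_{t_1})\cap\Sigma_{t_1})\bigr).
\]
Dividing both sides by $\omega_n r_2^n$ and using $r_1=r_2+2\Lambda|t_2-t_1|$ we write $\omega_n r_2^n=(r_2/r_1)^n\omega_n r_1^n$, so
\[
\frac{\mathrm{Vol}_{g(t_2)}(C_{p_{t_2}}(B_{r_2}(p_{t_2})\cap\Sigma_{t_2}))}{\omega_n r_2^n}\le e^{\Lambda^2|t_2-t_1|}\!\left(\frac{r_1}{r_2}\right)^{\!n}\frac{\mathrm{Vol}_{g(t_1)}(C_{p_{t_1}}(B_{r_1}(p_{t_1})\cap\Sigma_{t_1}))}{\omega_n r_1^n},
\]
and $(r_1/r_2)^n=(1+2\Lambda|t_2-t_1|/r_2)^n$ gives the stated $f(t_1,t_2,\Lambda,r_2)$.

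\textbf{Main obstacle.} The only real subtlety is matching the direction of the two inequalities so that both the $(r_1/r_2)^n$ radial factor and the $e^{\Lambda^2|t_2-t_1|}$ weight factor appear on the same side: the radial factor wants the larger ball to sit at the earlier time (through the speed estimate), while the area evolution $\partial_t d\mu_t=-|H|^2 d\mu_t$ naturally gives monotonicity in the opposite direction and must be used in its reverse form to produce the exponential growth factor. Keeping track of the cone contribution from the moving vertex $p_t$ (rather than just the surface area in a ball) is what forces the factor $2\Lambda$ (vertex motion plus surface motion) rather than $\Lambda$ alone in the expression for $r_1$.
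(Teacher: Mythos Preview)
The paper does not supply a proof of this lemma; it is quoted from \cite[Lemma 3.4]{LW1}. Your outline does capture the two standard ingredients (the ambient speed bound $|\partial_t\mathbf{x}|\le\Lambda$ and the area-element evolution $\partial_t d\mu_t=-H^2\,d\mu_t$), and Step~3 is the correct way to combine them into the stated factor $f$.

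There is, however, a genuine misreading that contaminates Steps~1--2 and your ``Main obstacle'' paragraph. In this paper $C_x(B_r(p)\cap\Sigma)$ denotes the \emph{connected component} of $B_r(p)\cap\Sigma$ containing $x$ (see the definition just before Theorem~\ref{pseudo1}), not a cone with vertex $x$. All the remarks about ``the cone contribution from the moving vertex,'' ``vertex moved accordingly,'' and the alleged tension between cone and surface contributions are therefore based on a wrong interpretation and should be removed. What you actually need in Step~1, beyond the pointwise containment $\mathbf{x}(q,t_1)\in B_{r_1}(p_{t_1})$, is that the flow-preimage of the component $C_{p_{t_2}}(B_{r_2}(p_{t_2})\cap\Sigma_{t_2})$ lies in the component $C_{p_{t_1}}(B_{r_1}(p_{t_1})\cap\Sigma_{t_1})$. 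This follows from the same speed bound: a continuous path in $\Sigma_{t_2}\cap B_{r_2}(p_{t_2})$ from $p_{t_2}$ to $\mathbf{x}(q,t_2)$ is carried by the flow to a continuous path in $\Sigma_{t_1}$ from $p_{t_1}$ to $\mathbf{x}(q,t_1)$, each of whose points lies within $r_2+2\Lambda|t_2-t_1|=r_1$ of $p_{t_1}$. Finally, there is no real ``direction'' obstacle in Step~2: since $d\mu_{t_2}=\exp\bigl(-\int_{t_1}^{t_2}H^2\,ds\bigr)d\mu_{t_1}$, the bound $\mathrm{Vol}_{g(t_2)}\le e^{\Lambda^2|t_2-t_1|}\mathrm{Vol}_{g(t_1)}$ holds for either sign of $t_2-t_1$ (trivially when $t_2>t_1$, and via $H^2\le\Lambda^2$ when $t_2<t_1$). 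With these corrections your argument is complete.
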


\begin{lemma}\cite[Lemma 3.5]{LW19}\label{lm4}
	Let $\Sigma^n\subset \mathbb{R}^{n+1}$ be a properly embedded hypersurface in $B_{r_0}(x_0)$ with $x_0\in \Sigma$ and $|H|\leq \Lambda$. Then for any $s\in (0,r_0)$ we have
	\begin{equation*}
		\frac{\mathrm{Vol}(B_s(x_0)\cap  \Sigma)}{\omega_n s^n}\leq e^{\Lambda r_0} \frac{\mathrm{Vol}(B_{r_0}(x_0)\cap  \Sigma)}{\omega_n r_0^n}
	\end{equation*}
	In particular, letting $s\to  0$ we have 
	\begin{equation*}
		\mathrm{Vol}(B_r(x_0)\cap  \Sigma)\geq e^{-\Lambda r}\omega_n r^n\quad \forall\; r\in (0,r_0].
	\end{equation*}
\end{lemma}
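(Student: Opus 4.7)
The plan is to extract the inequality from the first variation formula applied to a radial cutoff vector field on $\Sigma$, convert it into a differential inequality for the normalized volume, and then integrate.

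Setting $r(x) := |x - x_0|$ and fixing a smooth nonincreasing cutoff $\phi_0 \colon [0,\infty) \to [0,1]$ with $\phi_0 \equiv 1$ on $[0,1/2]$ and $\phi_0 \equiv 0$ on $[1,\infty)$, I would test the first variation identity $\int_\Sigma \mathrm{div}_\Sigma Y\, d\mathcal{H}^n = -\int_\Sigma Y \cdot \mathbf{H}\, d\mathcal{H}^n$ against $Y = \phi_0(r/s)(x-x_0)$, which is compactly supported in $B_s(x_0) \subset B_{r_0}(x_0)$ for each $s \in (0,r_0)$. Using $|\nabla^\Sigma r|^2 = 1 - |(x-x_0)^\perp|^2/r^2$, a direct computation gives
\[
\mathrm{div}_\Sigma Y = n\phi_0(r/s) + (r/s)\phi_0'(r/s) - s^{-1}\phi_0'(r/s)\, |(x-x_0)^\perp|^2/r,
\]
so the first variation yields an integrated identity expressing $\int_\Sigma (r/s)\phi_0'(r/s)\, d\mathcal{H}^n$ in terms of $\int_\Sigma \phi_0(r/s)\, d\mathcal{H}^n$, a nonnegative piece involving $|(x-x_0)^\perp|^2$, and a mean curvature error.

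Next I would define $J(s) := s^{-n}\int_\Sigma \phi_0(r/s)\, d\mathcal{H}^n$ and differentiate using $\partial_s \phi_0(r/s) = -(r/s^2)\phi_0'(r/s)$; substituting the identity from the first variation cancels the $-n s^{-n-1}\int_\Sigma \phi_0(r/s)\, d\mathcal{H}^n$ contribution against the $n\phi_0(r/s)$ term in $\mathrm{div}_\Sigma Y$, leaving
\[
J'(s) = -s^{-n-2}\!\int_\Sigma \phi_0'(r/s)\frac{|(x-x_0)^\perp|^2}{r}\, d\mathcal{H}^n + s^{-n-1}\!\int_\Sigma \phi_0(r/s)(x-x_0)^\perp \cdot \mathbf{H}\, d\mathcal{H}^n.
\]
The first term is nonnegative because $\phi_0' \leq 0$, and the second is bounded below by $-\Lambda J(s)$ since $|(x-x_0)^\perp \cdot \mathbf{H}| \leq \Lambda r$ and $r \leq s$ on the support of $\phi_0(r/s)$. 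Hence $\tfrac{d}{ds}[e^{\Lambda s} J(s)] \geq 0$, so $J(s) \leq e^{\Lambda(r_0 - s)} J(r_0) \leq e^{\Lambda r_0} J(r_0)$. Letting $\phi_0 \nearrow \chi_{[0,1]}$ by monotone convergence and dividing by $\omega_n$ gives the stated inequality for the volume ratio; the second assertion then follows by applying the first with $r_0$ replaced by $r$ and letting $s \to 0^+$, using that $\Sigma$ is smoothly embedded at $x_0$, so the density there equals $1$.

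The only mildly delicate point I expect is the limit $\phi_0 \nearrow \chi_{[0,1]}$, since $\Sigma$ can fail to meet $\partial B_s(x_0)$ transversally for certain $s$; this is routine because every term entering the inequality is monotone in $\phi_0$ and uniformly dominated by $\Lambda \cdot \mathrm{Vol}_\Sigma(B_{r_0}(x_0)\cap \Sigma) < \infty$, so the inequality passes to the characteristic function limit.
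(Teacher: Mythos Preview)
Your argument is correct; it is the standard monotonicity proof via the first variation formula with a radial cutoff, and every step checks out, including the differential inequality $J'(s)\geq -\Lambda J(s)$ and the density computation at the end. Note, however, that the paper does not actually supply its own proof of this lemma: it is quoted verbatim from \cite[Lemma~3.5]{LW1} without proof, so there is nothing in this paper to compare your approach against. Your write-up is precisely the argument one finds in \cite{LW1} (or in Simon's lectures \cite{Sim} for the stationary case with the obvious mean curvature modification).
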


\begin{theorem}\label{pseudo1}
	For any $r_0\in (0,1]$ and $T>1$, there exist $\delta=\delta(n,r_0,T),\varepsilon=\varepsilon(n)>0,\theta=\theta(n)>0$ with the following properties. Let $\{(M^n,\mathbf{x}(t)),-T\leq t\leq T\}$ be a closed smooth embedded mean curvature flow. Assume that
	\begin{itemize}
		\item [(1)] $\mathrm{Vol}(C_{p_0}( B_{r_0}(p_0)\cap M_0))\leq (1+\theta)\omega_n r_0^n$ where $p_0=\mathbf{x}_0(p)$ for some $p\in M$.
		\item [(2)] the mean curvature of $\{(M^n,\mathbf{x}(t)),-T\leq t\leq T\}$ is bounded by $\delta$.
	\end{itemize}
	Then for any $(x,t)\in C_{p_t}(M_t\cap B_{\varepsilon r_0}(p_0))\times [-T,T]$ 
	where $p_t=\mathbf{x}_t(p)$, we have the estimate
	\begin{equation*}
		|A|(x,t)\leq \frac{1}{\varepsilon r_0}.
	\end{equation*}
\end{theorem}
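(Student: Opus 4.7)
The plan is to combine three ingredients: Allard's regularity theorem (Theorem \ref{allard}) to convert the volume-ratio hypothesis into $C^{1,\alpha}$ graphicality; the volume-ratio comparison Lemma \ref{lm3} to propagate this graphicality both forward and backward in time; and the Ecker-Huisken interior estimate (Lemma \ref{EH}) to upgrade $C^{1,\alpha}$ control to a quantitative second fundamental form bound. The first step is to apply Theorem \ref{allard} at $t=0$: hypothesis (1) gives the volume ratio $\leq 1+\theta$ in $B_{r_0}(p_0)$ and hypothesis (2) gives $|H|\leq\delta$, so fixing $\theta=\theta(n)$ and a preliminary $\delta_0(n)$ small, one obtains that $C_{p_0}(\Sigma_0 \cap B_{\gamma r_0}(p_0))$ is a $C^{1,1-n/p}$ graph over the approximate tangent plane $P_0$ (with unit normal $v_0$) with $\sup |Du|$ controlled by a small quantity, so that the tilt $\nu = \langle \mathbf{n}, v_0\rangle^{-1}$ is close to $1$.

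Next I would use Lemma \ref{lm3} to transfer the volume-ratio bound from $t=0$ to every $t\in[-T+1,T]$: with $\Lambda=\delta$ the conversion factor $f(0,t,\delta,\cdot)=e^{\delta^2|t|}(1+2\delta|t|/\cdot)^n$ becomes arbitrarily close to $1$ by shrinking $\delta$ to some $\delta(n,T,r_0)$, while $|H|\leq\delta$ forces $|p_t-p_0|\leq\delta T$, which is negligible compared to $r_0$. A second application of Theorem \ref{allard} at each such $t$ then yields a $C^{1,\alpha}$ graphical representation of $\Sigma_t$ in $B_{\gamma r_0/2}(p_t)$ over a time-dependent hyperplane $P_t$ with small tilt. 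Finally, for each $t_0\in[-T+1,T]$ I would apply Lemma \ref{EH} on the parabolic slab $[s_0,t_0]$ with $s_0:=t_0-c(n)r_0^2\geq -T$: at time $s_0$, the graphicality over $P_{s_0}$ gives the required initial data, the bound $|H|\leq\delta$ combined with Ecker-Huisken's maximum principle for $\nu$ preserves graphicality over $P_{s_0}$ on the entire slab with $\nu\leq 2$, and then Lemma \ref{EH} yields $|A|^2(x,t_0)\leq c(n)((t_0-s_0)^{-1}+r_0^{-2})\sup\nu^4\leq C(n)r_0^{-2}$ on $C_{p_{t_0}}(\Sigma_{t_0}\cap B_{\varepsilon r_0}(p_0))$, from which the claimed bound $|A|\leq 1/(\varepsilon r_0)$ follows after choosing $\varepsilon=\varepsilon(n)$ with $C(n)\leq \varepsilon^{-2}$.

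The main obstacle is the backward-in-time direction: standard pseudolocality gives only forward-in-time curvature estimates, whereas Theorem \ref{pseudo1} asks for control throughout the symmetric interval $[-T+1,T]$. The key saving feature is that Lemma \ref{lm3} is time-symmetric, since its proof uses only the pointwise bound on $|H|$ and not the evolution direction, so almost-flatness at $t=0$ propagates to all nearby times in either direction and can be reset as Allard initial data for a short forward-in-time Ecker-Huisken application ending at the target time $t_0$. The most delicate quantitative point is choosing $\delta$ small enough, depending on $T$ and $r_0$, so that both the Lemma \ref{lm3} conversion factor and the Allard smallness threshold are simultaneously satisfied uniformly on $[-T+1,T]$; in particular $\theta$ and $\varepsilon$ must come out depending only on $n$, which matches the dimensional dependence in Theorem \ref{allard} and Lemma \ref{EH}.
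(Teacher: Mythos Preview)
Your overall strategy matches the paper's: Allard at $t=0$, Lemma~\ref{lm3} to propagate the volume ratio, Allard again at every time, and finally Ecker--Huisken. The point where you diverge from the paper, and where your argument is underspecified, is the sentence ``the bound $|H|\leq\delta$ combined with Ecker--Huisken's maximum principle for $\nu$ preserves graphicality over $P_{s_0}$ on the entire slab with $\nu\leq 2$.'' Lemma~\ref{EH} as stated \emph{assumes} that the relevant piece of $\Sigma_t$ is a graph over a fixed plane for every $t$ in the slab; it does not prove this. Allard gives you, at each $t$, a graph over a \emph{time-dependent} plane $P_t$, and nothing you have written controls how fast $P_t$ rotates away from $P_{s_0}$: the pointwise normal evolves by $\partial_t\mathbf{n}=\nabla H$, which is not bounded by $|H|\leq\delta$ alone. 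One can close this by a continuity/bootstrap argument (on the maximal interval where $\nu\leq 2$, Lemma~\ref{EH} gives $|A|\leq C/r_0$, parabolic estimates then bound $|\nabla H|$, hence the rotation of $\mathbf{n}$ over a slab of length $c(n)r_0^2$ is small), but this needs to be spelled out and requires nested domains and higher-order estimates.

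The paper handles this step differently and more directly: instead of many short slabs, it shows once and for all that the surface is graphical over the \emph{single} plane $T_{p_0}\Sigma_0$ for every $t\in[-T,T]$, and then applies Lemma~\ref{EH} on the full interval. The mechanism is to track the \emph{averaged} normal $\bar\nu(t)=\big(\int_{\Sigma_t}\phi\,\mathbf{n}\big)\big/\big(\int_{\Sigma_t}\phi\big)$ for a fixed cutoff $\phi$: using $\partial_t\mathbf{n}=\nabla H$ and integrating by parts, one computes $\big|\frac{d}{dt}\bar\nu(t)\big|\lesssim \delta$, so $\bar\nu(t)$ stays within $1/8$ of $\bar\nu(0)$ once $\delta=\delta(n,T,r_0)$ is small. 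Combined with the Allard tilt bound at each $t$, this pins the normal of every point of $\Sigma_t\cap B_{\gamma/4}(p_0)$ within $1/4$ of $\mathbf{n}(p_0,0)$, which is exactly the single-plane graphicality hypothesis Lemma~\ref{EH} needs. This averaged-normal computation is the one genuinely new ingredient beyond the three lemmas you cite, and it is what your sketch is missing.
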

\begin{proof}
$\\$
\textbf{Step 1. 	The volume ratio is close to one for any $t\in [-T,T]$.}

	Without loss of generality, we assume $r_0 = 1$. Let $p=n+1$ in Lemma \ref{allard}, we choose $2\theta<\lambda_0$ such that $C(2\theta)^{\frac{1}{2n+2}
	}<\frac{1}{128}$. Noting that 
	\begin{equation*}
		|p_t-p_0|=\bigg|\int_0^t H(p,s)\mathbf{n}(p,s)ds\bigg|\leq \delta T,
	\end{equation*}
	we can choose  $\delta$ small enough such that  $|p_0-p_t|\leq \gamma/16$ for $t\in [-T,T]$, where $\gamma$ is defined in Lemma \ref{allard}.
	
	Using Lemma \ref{lm3} and Lemma \ref{lm4}, we know that when $\delta $ is small enough we have 
	\begin{equation*}
		\begin{split}
			\frac{\mathrm{Vol}_{g(t)}(C_{p_{t}}(B_{1/2}(p_t)\cap M_{t}))}{\omega_n (1/2)^n}	&\leq e^{\delta r_t} \frac{\mathrm{Vol}_{g(t)}(C_{p_t}(B_{r_t}(p_t)\cap M_{t}))}{\omega_n r_t^n}\\
			&\leq e^{\delta r_t} f(0,t,\delta,r_t)\frac{\mathrm{Vol}_{g(0)}(C_{p_{0}}(B_{1}(p_{0})\cap M_{0}))}{\omega_n 1^n}\\
			&\leq 1+2\theta,
		\end{split}
	\end{equation*}
	where $1=r_t+2\delta|t|$.

    \textbf{Step 2. Each time-slice can be locally written as a graph.}
    
    By direct calculation, we have
	\begin{equation*}
		\begin{split}
			\bigg(2^{n-p}\int_{B_{\frac12}(p_t)}|H|^pd\mu_{M_t}\bigg)^{1/p}\leq \delta \bigg(2^{n-p}\int_{B_{\frac12}(p_t)}1d\mu_{M_t}\bigg)^{1/p}\leq   2\theta.
		\end{split}
	\end{equation*}
	Then by Allard's regularity theorem, for any $t\in [-T,T]$,  there exist an orthogonal transformation $Q_t$ of $\mathbb{R}^{n+1}$ and a $\Tilde{u}_t\in C^{1,\frac{1}{n+1}}$ with $D\Tilde{u}_t(0)=0$, $C_{p_t}(M_t\cap B_{\gamma/2}(p_t))=Q_t$(graph $\Tilde{u}_t$$\cap B_{\gamma/2}(0))$, and 
	\begin{equation}\label{eq2.11}
		\begin{split}
			\sup\limits_{x\in B_{\gamma/2}(p_t)} |D\Tilde{u}(x)|+  \sup\limits_{x,y\in B_{\gamma/2}(p_t),x\neq y}{|D\Tilde{u}_t(x)-D\Tilde{u}_t(y)|}\leq C(2\theta)^{\frac{1}{2n+2}}\leq \frac{1}{128}.
		\end{split}
	\end{equation}

	We claim that if $\delta$ is small enough then there is $u(x,t)$ such that
	$C_{p_t}(M_t\cap B_{\gamma/4}(p_0))=Q$(graph $u(\cdot,t)$$\cap B_{\gamma/4}(0))$ for any $t\in [-T,T]$. In fact, for any  $\phi\in C^1_{c}(B_{\gamma/4}(p_0))$, noting that $\partial_t \mathbf{n}=\nabla^{M_t} H$(cf.\cite{Hui84}), we have 
	\begin{equation}\label{eq2.12}
		\begin{split}
			\frac{d}{dt}\int_{M_{t}} \phi \mathbf{n}(x,t)&=\int_{M_t}\phi\nabla^{M_t}  H-\int_{M_t}H\nabla^\perp \phi -\int_{M_t} \phi|H|^2\mathbf{n}\\
			&=\int_{M_t}\nabla^{M_t}(\phi H)-\int_{M_t}H\nabla^{M_t}\phi -\int_{M_t}H\nabla^\perp \phi -\int_{M_t}\phi|H|^2\mathbf{n}\\
			&=\int_{M_t}\nabla^{M_t}(\phi H)-\int_{M_t}H\nabla\phi -\int_{M_t}\phi|H|^2\mathbf{n}.
		\end{split}
	\end{equation}
	Choose a standard orthonormal basis $e_1,\cdots,e_{n+1}$ of $\IR^{n+1}$. By direct calculation, we have
	\begin{equation}\label{v2-2.12}
		\begin{split}
			\int_{M_t}\nabla^{M_t}(\phi H)&=\sum_{i=1}^{n+1}\int_{M_t}\langle \nabla^{M_t}(\phi H),e_i\rangle dV_{M_t}e_i\\
			&=\sum_{i=1}^{n+1}\int_{M_t}\text{div}(\phi H e_i)dV_{M_t}e_i-\int_{M_t}\phi H \text{div}(e_i)dV_{M_t}e_i\\
			&=\sum_{i=1}^{n+1}\int_{M_t}\langle \phi H e_i, H\mathbf{n}\rangle dV_{M_t}e_i\\
			&=\int_{M_t}\phi H^2 \mathbf{n}
		\end{split}
	\end{equation}
	and
	\begin{equation}\label{v2-2.13}
		\begin{split}
			\frac{d}{dt}\int_{M_{t}} \phi=-\int_{M_{t}}H^2 \phi-\int_{M_{t}}H\nabla \phi\cdot\mathbf{n}.
		\end{split}
	\end{equation}
	Choose $\phi\geq 0$ so that $\phi\equiv1$ for  $d(x,p_0)\leq \gamma/8$ and $\phi\equiv 0$ for $d(x,p_0)\geq \gamma/4$ and $|\nabla \phi|\leq 16\gamma^{-1}$. Using \eqref{eq2.12}, \eqref{v2-2.12} and \eqref{v2-2.13},  we have
	\begin{equation}\label{eq2.13}
		\begin{split}
			\bigg|\frac{d}{dt}\frac{\int_{M_{t}} \phi \mathbf{n}(x,t)}{\int_{M_{t}} \phi }\bigg|&\leq  \bigg|\frac{\frac{d}{dt}\int_{M_{t}} \phi \mathbf{n}(x,t)}{\int_{M_{t}} \phi }\bigg|+\bigg|\frac{\int_{M_{t}} \phi \mathbf{n}(x,t)\frac{d}{dt}\int_{M_{t}} \phi}{(\int_{M_{t}} \phi )^2}\bigg|\\
			&\leq \frac{|\int_{M_t}|\nabla\phi| |H|}{\int_{M_{t}} \phi }+\frac{\int_{M_{t}} \phi(\int_{M_{t}}H^2 \phi+|\int_{M_{t}}|H| |\nabla\phi|)}{(\int_{M_{t}} \phi )^2}\\
			&\leq \frac{2\delta\int_{M_t}|\nabla\phi| +\delta^2\int_{M_t}\phi}{\int_{M_{t}} \phi }\\
			&\leq \frac{1}{32T} \\
		\end{split}
	\end{equation}
	for $t\in [-T,T]$, if  $\delta $ is small enough.
	
	Combining \eqref{eq2.11} and \eqref{eq2.13}, we obtain
	\begin{equation}\label{eq2.15}
		\begin{split}
			&\bigg|\mathbf{n}(p,0)-\mathbf{n}(p,t)\bigg|\\
			=&\bigg|\frac{\int_{M_{0}} \phi \mathbf{n}(p,0)}{\int_{M_0} \phi }-\frac{\int_{M_t} \phi \mathbf{n}(p,t)}{\int_{M_t} \phi }\bigg|\\
			\leq   &\bigg|\frac{\int_{M_0} \phi \mathbf{n}(p,0)}{\int_{M_0} \phi }-\frac{\int_{M_{0}} \phi \mathbf{n}(x,0)}{\int_{M_{0}} \phi }\bigg|+\bigg|\frac{\int_{M_{t}} \phi \mathbf{n}(p,t)}{\int_{M_{t}} \phi }-\frac{\int_{M_{t}} \phi \mathbf{n}(x,t)}{\int_{M_{t}} \phi }\bigg|\\
			&\quad +\bigg|\int_{0}^{t}\frac{d}{ds}\frac{\int_{M_{s}} \phi \mathbf{n}(x,s)}{\int_{M_{s}} \phi }\bigg| \leq \frac{1}{8}.
		\end{split}
	\end{equation}
	Combining \eqref{eq2.11} and \eqref{eq2.15}, we have $|\mathbf{n}(p,0)-\mathbf{n}(x,t)|\leq \frac{1}{4}$ for $x\in B_{\gamma/4}(p_0)\cap  M_t,t\in [-T,T]$. The claim follows form choosing $Q$ to be the tangent plane of $M_0$ at $p_0$.

    \textbf{ Step 3. Point-wise estimate for the second fundamental form.}
    
	By Lemma \ref{EH}, there exit $\varepsilon=\varepsilon(n)>0$ such that $|A|(x,t)\leq \varepsilon$ for any $x\in B_{\gamma/8}(p_0)\cap  M_t,t\in [-T+1,T]$. Choose $\gamma_0=\gamma/8$.
    
    \textbf{ Step 4. Extend the time interval.}

    By Lemma \cite[Theorem 3.8]{LW19}, there exit $\varepsilon=\varepsilon(n)>0$ such that $|A|(x,t)\leq \varepsilon$ for any $x\in B_{\gamma/8}(p_0)\cap  M_t,t\in [-T+1,T]$. Choose $\gamma_0=\gamma/8$.
    
    We complete the proof.
\end{proof}
\begin{remark}
    To prove the main theorem, it suffices to establish the curvature estimate for $t \in [-T+1, T]$.
\end{remark}

Similarly, we also have the following  two-sided pseudolocality theorem when $H$ is uniformly bounded rather than small enough. We omit the proof here.
\begin{theorem}\label{pseudo2}
For any $r_0\in (0,1],T>1$ and $\Lambda>0$, there exist $\delta=\delta(n,T,r_0),\varepsilon=\varepsilon(\Lambda,n)>0,\theta=\theta(\Lambda,n)>0,\eta=\eta(\Lambda,n)>0$ satisfying
\begin{equation*}
	\begin{split}
		\lim\limits_{\Lambda\to 0}\eta(n,\Lambda)=\eta_0(n)>0,\; \lim\limits_{\Lambda\to 0}\varepsilon(n,\Lambda)=\varepsilon_0(n)>0,\; \lim\limits_{\Lambda\to 0}\theta(n,\Lambda)=\theta_0(n)>0
	\end{split}
\end{equation*}
 and the following properties. Let $\{(M^n,\mathbf{x}(t)),-T\leq t\leq T\}$ be a closed smooth embedded mean curvature flow. Assume that
\begin{itemize}
	\item [(1)] $C_{p_0}(M_0\cap B_{r_0}(p_0))\leq (1+\theta)\omega_n r_0^n$ where $p_0=\mathbf{x}_0(p)$ for some $p\in M$.
	\item [(2)] the mean curvature of $\{(M^n,\mathbf{x}(t)),-T\leq t\leq T\}$ is bounded by $\Lambda$.
\end{itemize}
Then for any $(x,t)$ satisfying

$$x\in C_{p_t}(M_t\cap B_{\varepsilon r_0}(p_0)),t\in\bigg[-\frac{\eta r_0^2}{4(\Lambda +\Lambda^2)},\frac{\eta r_0^2}{4(\Lambda +\Lambda^2)}\bigg] \cap [-T,T]$$

where $p_t=\mathbf{x}_t(p)$, we have the estimate
\begin{equation}
	|A|(x,t)\leq \frac{1}{\varepsilon r_0}.
\end{equation}
\end{theorem}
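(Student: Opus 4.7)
The proof parallels that of Theorem \ref{pseudo1}, the essential modification being that Allard's regularity theorem must now be applied at a spatial scale comparable to $\min(1,1/\Lambda)$ rather than $1/2$, and the time window on which the normal vector is shown to rotate little is of size $O(1/(\Lambda+\Lambda^2))$. After normalizing $r_0=1$, I fix $\theta<\lambda_0/2$ so that $C(2\theta)^{1/(2n+2)}\leq 1/128$ in Lemma \ref{allard}, and set $\rho:=\min(1/2,\,2\theta/(c_0(n)\Lambda))$ for a constant $c_0(n)$ to be chosen below. Applying Lemma \ref{lm3} and Lemma \ref{lm4} exactly as in the proof of Theorem \ref{pseudo1} but at scale $\rho$, hypothesis (1) propagates to $\omega_n^{-1}\rho^{-n}\,\mathrm{Vol}(C_{p_t}(B_\rho(p_t)\cap\Sigma_t))\leq 1+2\theta$ for all $|t|\leq \eta/(\Lambda+\Lambda^2)$ provided $\eta>0$ is chosen small depending only on $n$. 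Since $|H|\leq \Lambda$ gives $(\rho^{p-n}\int_{B_\rho(p_t)}|H|^p\,d\mu)^{1/p}\leq c_0(n)\Lambda\rho\leq 2\theta$, Allard's theorem produces, at each time $t$, an orthogonal transformation $Q_t$ and a $C^{1,1/(n+1)}$ graph $\tilde u_t$ with $|D\tilde u_t|\leq 1/128$ such that $C_{p_t}(M_t\cap B_{\gamma\rho}(p_t))=Q_t(\mathrm{graph}\,\tilde u_t\cap B_{\gamma\rho}(0))$.

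Next, I repeat the integral identities \eqref{eq2.12}--\eqref{v2-2.13} verbatim, but with the cut-off $\phi$ supported in $B_{\gamma\rho/4}(p_0)$ satisfying $|\nabla\phi|\leq 16/(\gamma\rho)$ and with $\delta$ replaced by $\Lambda$. The resulting estimate reads
\[
\left|\frac{d}{dt}\frac{\int_{M_t}\phi\mathbf n}{\int_{M_t}\phi}\right|\leq C_1(n)\frac{\Lambda}{\rho}+C_2(n)\Lambda^2.
\]
Because $\rho^{-1}\leq \max(2,\,c_0(n)\Lambda/(2\theta))$, the right-hand side is bounded by a constant (depending on $n,\theta$) times $\Lambda+\Lambda^2$, so imposing $|t|\leq \eta/(\Lambda+\Lambda^2)$ with $\eta$ small forces the change of $\int\phi\mathbf n/\int\phi$ to be at most $1/32$. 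Combined with the Allard slope bound via the analogue of \eqref{eq2.15}, this yields $|\mathbf n(x,t)-\mathbf n(p,0)|\leq 1/4$ for every $x\in B_{\gamma\rho/4}(p_0)\cap M_t$, so throughout the time window the hypersurface $M_t$ is a graph over the fixed plane $T_{p_0}M_0$ with slope at most $1/4$ on this ball. Applying Lemma \ref{EH} to this graph on $\{r(x,t)\leq (\gamma\rho/8)^2\}$ and choosing $\varepsilon=\varepsilon(n,\Lambda)\leq \gamma\rho/16$ gives $|A|\leq 1/(\varepsilon r_0)$ on $C_{p_t}(\Sigma_t\cap B_{\varepsilon r_0}(p_0))$ for $t\in[-T+1,T]\cap[-\eta r_0^2/(4(\Lambda+\Lambda^2)),\eta r_0^2/(4(\Lambda+\Lambda^2))]$. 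As $\Lambda\to 0$ one has $\rho\to 1/2$ and all of the above constants reduce to those appearing in Theorem \ref{pseudo1}, so $\eta,\varepsilon,\theta$ admit the positive limits $\eta_0(n),\varepsilon_0(n),\theta_0(n)$ claimed in the statement.

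The hardest part is choosing $\rho$ coupled to $\theta$ and $\Lambda$ in precisely the right way so that the Allard threshold $c_0(n)\Lambda\rho\leq 2\theta$ and the normal-rotation bound $|t|(\Lambda/\rho+\Lambda^2)\leq 1/32$ can be satisfied simultaneously and continuously across the two regimes $\Lambda\to 0$ (where $\rho=1/2$) and $\Lambda\to\infty$ (where $\rho\sim 1/\Lambda$). One must check that in both asymptotics the resulting time window length behaves like $1/(\Lambda+\Lambda^2)$ and, in particular, that $\eta$ does not collapse as $\Lambda\to 0$; this bookkeeping is what the omitted proof in the statement really hides.
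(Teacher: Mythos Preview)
The paper omits the proof of Theorem \ref{pseudo2} entirely, saying only that it is similar to Theorem \ref{pseudo1}. Your proposal correctly identifies the two places where the argument of Theorem \ref{pseudo1} must be modified---working with Allard's theorem at a scale $\rho\sim\min(1/2,\theta/\Lambda)$ so that the mean-curvature integral hypothesis is met, and restricting the time window to length $O(1/(\Lambda+\Lambda^2))$ so that the averaged normal rotates little---and your treatment of the normal-rotation identity and of the limits as $\Lambda\to 0$ is sound.

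There is, however, a gap in your first step. You assert that ``applying Lemma \ref{lm3} and Lemma \ref{lm4} exactly as in the proof of Theorem \ref{pseudo1} but at scale $\rho$, hypothesis (1) propagates to $\omega_n^{-1}\rho^{-n}\mathrm{Vol}(C_{p_t}(B_\rho(p_t)\cap\Sigma_t))\leq 1+2\theta$.'' In Theorem \ref{pseudo1} that chain reads (with $r_t=1-2\delta|t|$)
\[
\text{ratio at }\tfrac12,\ \text{time }t \;\leq\; e^{\delta r_t}\cdot f(0,t,\delta,r_t)\cdot(1+\theta),
\]
and both prefactors are $1+o(1)$ because $\delta$ is small. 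When $\delta$ is replaced by $\Lambda$ the factor coming from Lemma \ref{lm4} is $e^{\Lambda r_t}\approx e^{\Lambda}$, which is \emph{not} close to $1$ unless $\Lambda$ itself is small. Since Allard's threshold $\lambda_0\leq 1/16$ is fixed, the inequality $e^{\Lambda}(1+\theta)\leq 1+\lambda_0$ is impossible once $\Lambda>\log(1+\lambda_0)$, regardless of how small you make $\theta$. Thus the volume-ratio bound at scale $\rho$ needed to invoke Allard cannot be obtained by repeating the chain of Theorem \ref{pseudo1}; as written your argument only covers $\Lambda\ll 1$, which is essentially Theorem \ref{pseudo1} again. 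A proof valid for all $\Lambda$ must bypass the $e^{\Lambda r_0}$ loss in Lemma \ref{lm4}---for instance via a contradiction/compactness argument that exploits the freedom $\theta=\theta(\Lambda,n)$---rather than transplanting the density-propagation chain verbatim.
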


\subsection{Weak compactness of hypersurface}$\\$

In \cite{SS81}, Schoen-Simon proved that
	\begin{theorem}\label{thm3.2}
		Suppose $\{M_k\}$ is a sequence of orientable $C^2$ minimal hypersurfaces with
		\begin{equation}
			\begin{split}
				0\in \bar{M}_k,\mathcal{H}^{n-2}(\mathrm{sing}M_k\cap B_{\rho_0}(0))=0,\quad k=1,2,\cdots,
			\end{split}
		\end{equation}
		suppose each $M_k$ is stable in $B_{\rho_0}(0)$ and suppose $\liminf\limits_{k\to  \infty}\CH^n(M_k\cap B_{\rho_0}(0))<\infty$. Then there exist a subsequence $\{k'\}\subset\{k\}$ and a varifold $V$ such that $M_{k'}\rightharpoonup V$ in $B_{\frac{\rho_0}{2}}(0)$ as varifold and such that
		\begin{equation}
			\begin{split}
				\mathrm{supp}V\cap B_{\frac{\rho_0}{2}}(0)=\bar{M}\cap B_{\frac{\rho_0}{2}}(0)
			\end{split}
		\end{equation}
	where $M$	is an  orientable minimal hypersurface with $\mathcal{H}^n(M\cap B_{\frac{\rho_0}{2}}(0))<\infty$ and $\mathcal{H}^{\alpha}( \mathrm{sing} M\cap B_{\frac{\rho_0}{2}}(0))=0$ for each  $\alpha>n-7$.
\end{theorem}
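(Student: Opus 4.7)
My plan is to follow Schoen--Simon's strategy: first extract a varifold limit by compactness, then use the stability-driven curvature estimate to upgrade to smooth convergence away from a closed singular set, and finally apply Federer-type dimension reduction to bound the dimension of that singular set.

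For the compactness step, I pass to a subsequence along which $\CH^n(M_k\cap B^{n+1}_{\rho_0}(0))$ is bounded by some $\Lambda<\infty$. Since each $M_k$ is stationary, Allard's compactness theorem produces a further subsequence and a stationary integral $n$-varifold $V$ on $B^{n+1}_{\rho_0/2}(0)$ with $M_{k'}\rightharpoonup V$. I then set $M:=\mathrm{reg}\,V$ and $\mathrm{sing}M:=\mathrm{spt}V\setminus M$ in $B^{n+1}_{\rho_0/2}(0)$, so that the desired identity $\mathrm{spt}V\cap B^{n+1}_{\rho_0/2}(0)=\bar{M}\cap B^{n+1}_{\rho_0/2}(0)$ holds by construction and it remains to establish regularity of $M$ and the dimension estimate on $\mathrm{sing}M$.

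The next step is a curvature bound and smooth convergence on the regular part. The core Schoen--Simon estimate asserts that there exists $\varepsilon_0=\varepsilon_0(n)>0$ such that any stable $C^2$ minimal hypersurface $\Sigma$ in $B_\rho(x)$ with $\CH^{n-2}(\mathrm{sing}\Sigma)=0$ and excess at most $\varepsilon_0$ satisfies $\sup_{\Sigma\cap B_{\rho/2}(x)}|A|\leq C(n)/\rho$. I apply this estimate uniformly to the $M_k$ at any $x\in\mathrm{spt}V$ of density close to one; a uniform curvature bound follows in a neighborhood, and standard elliptic regularity for the minimal surface equation then yields smooth subsequential convergence of the $M_{k'}$ to a classical minimal hypersurface there. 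The points of $\mathrm{spt}V$ at which this regular behavior fails form the closed set $\mathrm{sing}M$, and on its complement $M$ is a smooth orientable minimal hypersurface (orientability being inherited from the $M_k$ through the smooth convergence).

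The main obstacle is the Hausdorff dimension bound on $\mathrm{sing}M$, for which the tool is Federer's dimension reduction. At any $x_0\in\mathrm{sing}M$, monotonicity produces a tangent cone $\mathbf{C}$ of $V$ at $x_0$, and I need to show that $\mathbf{C}$ is again a stable stationary integral cone with $\CH^{n-2}(\mathrm{sing}\mathbf{C})=0$; the subtlety is passing the stability inequality to the varifold limit, which uses the smooth convergence on the regular part together with an approximation of test functions compactly supported away from $\mathrm{sing}\mathbf{C}$, and this is precisely where the codimension-two hypothesis $\CH^{n-2}(\mathrm{sing}M_k)=0$ is used. If $\mathbf{C}$ has an interior singular point $y\neq 0$, one blows up a second time at $y$ to split off a Euclidean factor and obtain a lower-dimensional stable cone. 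Iterating this and invoking the classical nonexistence of nontrivial stable minimal cones in $\IR^{n+1}$ for $n\leq 6$ forces the inductive procedure to terminate, yielding $\CH^\alpha(\mathrm{sing}M)=0$ for every $\alpha>n-7$ by the standard measure-theoretic argument of Federer.
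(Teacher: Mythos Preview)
The paper does not prove this theorem at all: it is simply quoted from Schoen--Simon \cite{SS81} (the statement is introduced by ``In \cite{SS81}, Schoen--Simon proved that\ldots''). Your outline is a faithful sketch of the Schoen--Simon argument---Allard compactness, the stability-based flatness/curvature estimate, and Federer dimension reduction---so there is nothing to compare.
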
	

	In remark 1 of \cite{SS81}, they studied $C^2$ minimal hypersurface in a $C^3$ Riemannian manifold. So, they arguments hold for stable hypersurfaces with bounded mean curvature(see equations (1.16) and (1.17) in \cite{SS81}). Consider a hypersurface $M$ such that 
	\begin{itemize}
		\item The mean curvature is bounded 
		\begin{equation}\label{eq3.3}
			|H|\leq \Lambda ,
		\end{equation}
		\item $M$ is stable, i.e.,
		\begin{equation}\label{eq3.4}
			\int_{M}|A|^2\xi^2 \leq \int_M |\nabla \xi|^2
		\end{equation}
		for any $C^1$ function $\xi$ with compact support in $M\cap B^{n+1}_{\rho_0}(0)$.
		\item An upper bound on the volume ratio
		\begin{equation}\label{eq3.5}
		\sup\limits_{x\in M_k}\sup\limits_{r>0}\frac{\mathcal{H}^n(M_k\cap B_r(x))}{\omega_n r^n}\leq  D
		\end{equation}
	for any $B_r(x)\subset B^{n+1}_{\rho_0}(0)$.
	\end{itemize}
Theorem 1 in \cite{SS81} can be written as 
	\begin{lemma}\label{lm3.2}
	Suppose $M$ is a hypersurface in $B_1(0)$ satisfying \eqref{eq3.3}-\eqref{eq3.5}. We also assume that $M$ is a $C^2$ embedded hypersurface with $\CH^{n-2}(\mathrm{sing}M)=0$. 
	There exists $\delta_0\in (0,\frac12)$ and $\beta\in (0,\frac14)$ depending only on $n,\Lambda, D$ such that if $X\in \bar{M}\cap B_{\frac14\rho}(0),\rho \in (0,\frac14)$, $M'$ is the connected  component of $\bar{M}\cap C(X,\rho)$ containing $X$, and 
	\begin{equation}
		\sup_{Y\in M'}|y_{n+1}-x_{n+1}|\leq \delta_0\rho,\quad \Lambda\rho\leq \delta_0,
	\end{equation}
	$($with $X=(x,x_{n+1}),Y=(y,y_{n+1}))$, then $M'\cap C(X,\frac12\rho)$ consists of a disjoint union of graphs of functions $u_1<u_2<\cdots<u_k$ defined on $B_{\frac12\rho}(x)\subset \IR^n$, satisfying the following estimate:
	\begin{equation}\label{eq3.7}
		\begin{split}
			\sup\limits_{B(x,\frac12\rho)}|Du_i| +\rho^{\beta}\sup\limits_{x,y\in B_{\frac12\rho}(0),x\neq y}\frac{|Du_i(x)-Du_i(y)|}{|x-y|^{\beta}}\leq C\delta_0.
		\end{split}
	\end{equation}
	for $i=1,2,\cdots,k,\beta \in (0,\beta_0)$ with $C$ depending only  on $n,\Lambda,D,\beta$.
	\end{lemma}
\begin{proof}
	The proof of Lemma~\ref{lm3.2} is subtle and of independent interest, and is therefore omitted here. For the reader’s convenience, a complete and rigorous proof is provided in \cite{Han-note}.
\end{proof}

\begin{proposition}\label{thm3.1}
	If $\{M^n_k\}\subset \mathbb{R}^{n+1}$ is a sequence of complete, connected and embedded hypersurfaces with  $\CH^{n-2}(M_k)=0$ and satisfying
	\begin{itemize}
		\item [(a)] $\sup\limits_{k\to \infty}\|\textbf{H}\|_{L^\infty(M_k)}\leq \Lambda_1$,
		\item [(b)] $\sup\limits_{x\in M_k}\sup\limits_{r>0}\frac{\mathcal{H}^n(M_k\cap B_r(x))}{\omega_n r^n}\leq  \Lambda_2$,
		\item [(c)] $\mathrm{index}(M_k)\leq I$.
	\end{itemize}
	for some fixed constant $\Lambda_1,\Lambda_2>0, I\in \mathbb{N}$. Then up to a subsequence, there exists a varifold $V$ such that $M_k$ converges to $V$ as varifold and such that
	\begin{equation}
		\begin{split}
			\mathrm{supp}V=\bar{M}
		\end{split}
	\end{equation} 
	where $M$ is a connected and oriented $C^{1,\beta}$ hypersurface in $\mathbb{R}^{n+1}$ and
	\begin{itemize}
		\item [$\mathrm(1)$] $\|\textbf{H}\|_{L^\infty(M)}\leq \Lambda_1$,
		\item [$\mathrm(2)$] $\sup\limits_{x\in M}\sup\limits_{r>0}\frac{\mathcal{H}^n(M\cap B_r(x))}{\omega_n r^n}\leq  \Lambda_2$,
		\item [$\mathrm(3)$] $\CH^{n-2}(\mathrm{sing}M)=0$.
	\end{itemize}
	We have that the convergence is in the $C^{1,\beta}$ topology for all $x\in \mathrm{reg}M\setminus\CY$ and $\beta\in (0,\beta_0)$, where $\CY\subset \bar{M}$ is a discrete set  such that $|\CY|\leq I$.
	
	Moreover, if  the convergence is $C^2$ in $\mathrm{reg}M\setminus\CY$, we have
	\begin{equation}
		\begin{split}
			\mathrm{index}(M)\leq I \text{ and }	\CH^{\alpha}(\mathrm{sing}M)=0	 \text{ for any }\alpha>n-7.
		\end{split}
	\end{equation}
\end{proposition}
\begin{proof}
The proof is deferred to Appendix \ref{secA}.
\end{proof}

	\subsection{Proof of Theorem \ref{prop4.2}.}

\begin{proof}[Proof of Theorem \ref{prop4.2}]
 By Proposition \ref{thm3.1} and \eqref{eq4.1}, there exists a stationary varifold  $\Sigma_\infty$ such that $\Sigma_{i,0}\rightharpoonup \Sigma_\infty$  as a varifold and the convergence is $C^{1,\alpha}$ at regular points of $\Sigma_\infty$ except a finite set $\CS$. For any $p\in \mathrm{reg}\Sigma_\infty\setminus \CS$, there exists $r>0$ such that 
	\begin{equation}
		\begin{split}
			r^{-n}\text{Vol}(B_r(p)\cap \Sigma_\infty)\leq (1+\theta/2),
		\end{split}
	\end{equation}
	where $\theta$ is  the constant in Theorem \ref{pseudo1}. So, for $i$ large enough, each connected component of $\Sigma_{i,0}\cap B_{r}(p)$ satisfies the condition in Theorem \ref{pseudo1}.	Then for any 
	$x\in \Sigma_{i,t}\cap B_{\varepsilon r}(p),t\in [-1,1]$, we have the estimate
	\begin{equation}
		|A|(x,t)\leq \frac{1}{\varepsilon r},
	\end{equation}
	where $\varepsilon$ is the constant in  Theorem \ref{pseudo1}.
	By Theorem \ref{mcf-cov}, $\Sigma_{i,t}$ smoothly converges to $\Sigma_\infty$(possible with multiplicity) in $B_{\varepsilon r}(p)$. So, again by  Proposition \ref{thm3.1}, we have 
	\begin{equation}
		\begin{split}
		\CH^{\alpha}(\mathrm{sing}\Sigma_\infty)=0
		\end{split}
	\end{equation}
	for $\alpha>n-7$. Moreover, $\mathrm{index}(\Sigma)\leq I$.
\end{proof}
\begin{remark}
	 	By the definition of convergence, the multiplicity is locally constant near any regular point. Since $\mathcal{H}^{n-2}(\operatorname{sing}\Sigma)=0$, the regular part of the limit hypersurface is connected; hence the multiplicity is constant on the regular part (cf.~\cite[Lemma 3.14]{LW19}).
\end{remark}

\section{Multiplicity-one convergence of the rescaled mean curvature flow}\label{sec4}
In this section, we show that a rescaled mean curvature flow with mean curvature exponential decay will converge smoothly to a stable cone with multiplicity
one.
\begin{theorem}\label{mul-one}
	Let $\{(\Sigma^n,\mathbf{x}(t)),0\leq t<+\infty\}$ be a closed rescaled mean curvature flow
	\begin{equation}\label{eq5.1}
		\bigg(\frac{\partial \mathbf{x}}{\partial t}\bigg)^\perp=-\bigg(H-\frac{1}{2}\langle \mathbf{x},\mathbf{n}\rangle \bigg)\mathbf{n}
	\end{equation}
	satisfying 
	\begin{equation}
		d(\Sigma_t,0)\leq D,\quad \max\limits_{\Sigma_t}|H(p,t)|\leq \Lambda_0e^{-\frac{t}{2}}\quad and \quad \max\limits_{t}\mathrm{ index}(\Sigma_t)\leq I_0
	\end{equation}
	for some constants $D,\Lambda_0,I_0$. Then for any sequence $t_i\to +\infty$ there exists a subsequence of $\{\Sigma_{t_i+t},-1<t<1\}$ such that it converges in the smooth topology to a stable minimal cone $\Sigma_\infty$ away from $\mathrm{sing}\Sigma_\infty$ with multiplicity one as $i\to +\infty$.
\end{theorem}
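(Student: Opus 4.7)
The plan is to apply the weak compactness Proposition \ref{prop4.2} to the translated sequence of rescaled flows $\Sigma_{i,s}:=\Sigma_{t_i+s}$ for $s\in[-1,1]$, identify the limit as a stationary cone, and rule out higher multiplicity via the $F$-instability Lemma \ref{lm-b}.

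First I would verify the refined-sequence hypotheses for $\{\Sigma_{i,s}\}$, adapted to the rescaled MCF setting. The distance bound $d(\Sigma_{i,s},0)\le D$ is given, the mean curvature satisfies $|H_{\Sigma_{i,s}}|\le \Lambda_0 e^{-(t_i-1)/2}\to 0$, and $\mathrm{index}(\Sigma_{i,s})\le I_0$. For the upper area-ratio bound I pass back to the underlying MCF $M_\tau=e^{-t/2}\Sigma_t$ with $\tau=-e^{-t}$, apply Lemma \ref{vol}, and rescale to get $r^{-n}\mathcal{H}^n(B_r(p)\cap \Sigma_{i,s})\le N_0$; the lower area-ratio bound follows from Lemma \ref{lm4} since the mean curvature tends to zero. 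Proposition \ref{prop4.2} itself is stated for MCF, but the argument extends to the rescaled flow because the extra drift $\langle x,\mathbf{n}\rangle/2$ is bounded by $D/2$, so Theorem \ref{mcf-cov} and Theorem \ref{thm3.1} still apply uniformly on $[-1,1]$.

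Next I identify the limit. Applying Theorem \ref{thm3.1} to $\Sigma_{i,0}$, a subsequential limit $\Sigma_\infty$ exists and is stationary because $|H|\to 0$. To upgrade $\Sigma_\infty$ to a cone, I use Huisken's monotonicity in rescaled form,
\begin{equation*}
\frac{d}{dt}F(\Sigma_t)=-\int_{\Sigma_t}\Bigl|H-\tfrac{1}{2}\langle \mathbf{x},\mathbf{n}\rangle\Bigr|^2 e^{-|x|^2/4}\,d\mu,
\end{equation*}
together with the uniform upper bound on $F(\Sigma_t)$ coming from $d(\Sigma_t,0)\le D$ and the area-ratio bound, to deduce that the right-hand side is integrable in $t$. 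Along $t_i\to\infty$, possibly after averaging over $s\in[-1,1]$, the integrand tends to zero; combined with $H\to 0$, this forces $\langle \mathbf{x},\mathbf{n}\rangle\equiv 0$ on $\Sigma_\infty$, so $\Sigma_\infty$ is a cone. The flow equation \eqref{eq5.1} degenerates to $\partial_s \mathbf{x}=0$ on the limit, so the convergence is static and $\Sigma_\infty$ is the common smooth limit for every $s\in[-1,1]$. Lemma \ref{index-bound} then gives $\mathrm{index}(\Sigma_\infty)\le I_0$, and Lemma \ref{stable-cone} yields that $\Sigma_\infty$ is stable with $\mathcal{H}^{n-7+\alpha}(\mathrm{sing}\Sigma_\infty)=0$ for every $\alpha>0$.

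The final and hardest step is multiplicity one. Suppose for contradiction that $\Sigma_{i,0}\rightharpoonup m\Sigma_\infty$ with $m\ge 2$. Around any regular point $p\in\mathrm{reg}\Sigma_\infty$, write $\Sigma_{i,0}$ locally as $m$ graphs $u_i^1<\cdots<u_i^m$ over $T_p\Sigma_\infty$ converging smoothly to $0$; set $w_i=u_i^m-u_i^1>0$ and normalize $\tilde w_i=w_i/\|w_i\|_{L^2_{\mathrm{loc}}}$. The linearization of the rescaled MCF equation \eqref{eq5.1} at the cone $\Sigma_\infty$ is exactly the $F$-Jacobi operator $L=\Delta-\tfrac{1}{2}\langle x,\nabla(\cdot)\rangle+|A|^2+\tfrac{1}{2}$ of Definition \ref{L-stab}, and a standard difference-of-sheets argument with elliptic Harnack extracts a nontrivial nonnegative limit $v$ on $\mathrm{reg}\Sigma_\infty$ with $Lv=0$; the strong maximum principle upgrades $v>0$. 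By a Fischer--Colbrie type argument, which is legitimate because $\mathcal{H}^{n-2}(\mathrm{sing}\Sigma_\infty)=0$ permits testing $L$-stability against Lipschitz functions vanishing near the singular set, the existence of a positive $L$-Jacobi field forces $L$-stability of $\Sigma_\infty$ on every ball $B_R(0)$. Choosing $R\ge R_n$ contradicts Lemma \ref{lm-b}. The main obstacle is carrying the Jacobi-field construction through the singularities of the cone and justifying the linearization uniformly up to $\mathrm{sing}\Sigma_\infty$; once this is handled, the contradiction yields multiplicity one.
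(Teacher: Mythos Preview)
Your first two steps—verifying refined-sequence hypotheses and identifying the limit as a stable stationary cone via Huisken monotonicity—are essentially what the paper does in Lemma \ref{lm5.2}, and your adaptations (handling the drift term, rescaling back to MCF for the area ratios) are fine.

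The gap is in the multiplicity-one step. You assert that the normalized height difference $\tilde w_i$, extracted from the sheets of $\Sigma_{i,0}$ at a \emph{single} time, converges to a positive solution of $Lv=0$. But $\Sigma_{i,0}$ satisfies no elliptic equation: it is merely a time slice of a flow. The only PDE available comes from linearizing the \emph{parabolic} rescaled MCF, which yields $\partial_s v = Lv$ for the limiting height difference $v(x,s)$ on the finite interval $s\in(-1,1)$, not $Lv=0$. (If instead you linearize the almost-minimal condition $H\to 0$, you get at best $(\Delta+|A|^2)v=0$, the ordinary Jacobi equation; the resulting Fischer--Colbrie argument only recovers minimal-surface stability of the cone, which you already know from Lemma \ref{stable-cone}, and says nothing about $L$-stability.) A positive solution of $\partial_s v=Lv$ on a bounded time interval does \emph{not} force $L$-stability: the log-trick inequality
\[
-\int_{\Sigma_\infty}\varphi L\varphi\, e^{-|x|^2/4}\ \ge\ -\frac{d}{ds}\int_{\Sigma_\infty}\varphi^2\log v\, e^{-|x|^2/4}
\]
must be integrated over $[0,T]$ and divided by $T\to\infty$, which requires $v$ to exist on $[0,\infty)$ with a \emph{uniform-in-$T$} upper bound on compact sets and a lower bound at $s=0$.

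This is precisely the difficulty the paper's machinery (Definition \ref{dec}, Lemmas \ref{lm5.9}--\ref{lm5.12}, Proposition \ref{prop5.13}) is built to overcome. The thick/thin decomposition produces a quantity $|\mathbf{TN}(\Sigma_t,\varepsilon,\zeta,R)|$ comparable to $\int u_i$, and the key time-selection Lemma \ref{lm5.12} finds a sequence $\{t_i\}$ along which this quantity does not more than double over arbitrarily long windows $[t_i,t_i+l]$. That control, combined with parabolic Harnack, is what guarantees the normalized height difference stays bounded above on $\Omega_{\varepsilon,R}\times[0,\infty)$ (Proposition \ref{prop5.13}); only then does the integration-in-time argument of Lemma \ref{lm5.16} yield $L$-stability and the contradiction with Lemma \ref{lm-b}. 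The obstacle you flagged—pushing the Jacobi field across $\mathrm{sing}\Sigma_\infty$—is real but secondary; the essential missing ingredient is producing a positive parabolic solution on an \emph{infinite} time ray with uniform bounds.
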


\subsection{ Convergence away from singularities}
\begin{lemma}\label{lm5.2}
Under the assumption of Theorem \ref{mul-one}, for any sequence $t_i\to +\infty$, there is a stable stationary cone $\Sigma_\infty\in \CC(N_0,n)$ and a finite set $\mathcal{S}_0\subset \mathrm{reg}(\Sigma_\infty)$ of points satisfying the following properties. For any $T>1/2$, there is a subsequence, still denoted by $\{t_i\}$, such that $\{\Sigma_{t_i+t},-T<t<T\}$ converges
in the smooth topology, possibly with multiplicities at most $N_0$, to $\Sigma_\infty$ away from $\{(x,t)|t\in (-T,T),x\in e^{\frac{t}{2}}\mathcal{S}_0\}\cup \{(x,t)|t\in (-T,T),x\in \mathrm{sing}\Sigma_\infty\}$.
\end{lemma}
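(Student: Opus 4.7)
The plan is to convert the rescaled flow into a refined sequence of ordinary mean curvature flows in the sense of Definition~\ref{def4.1}, apply Proposition~\ref{prop4.2}, and then upgrade the resulting weak limit to a cone via Huisken's monotonicity for the rescaled MCF. Writing $M_\tau$ for the underlying MCF so that $\Sigma_t = e^{t/2}M_{-e^{-t}}$, and setting $\lambda_i = e^{t_i/2}$, I define
\[
\tilde M^i_\sigma := \lambda_i\, M_{(\sigma-1)/\lambda_i^2},
\]
which is itself an MCF since parabolic rescaling preserves MCF. A direct computation gives the comparison $\Sigma_{t_i+s} = e^{s/2}\tilde M^i_{1-e^{-s}}$ for $s\in(-T,T)$, so smooth convergence of $\tilde M^i_\sigma$ near $\sigma=0$ translates into smooth convergence of $\Sigma_{t_i+s}$ on $(-T,T)$.

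Next I verify that $\{\tilde M^i_\sigma\}$ is a refined sequence. The distance bound $d(\tilde M^i_\sigma,0)\le e^{T/2}D$ follows from $\Sigma_t\subset \overline{B_D(0)}$; from $|H_\Sigma|\le\Lambda_0 e^{-t/2}$ one extracts $|H_{M_\tau}|\le\Lambda_0$, whence $|H_{\tilde M^i_\sigma}| = |H_{M_\tau}|/\lambda_i \le \Lambda_0 e^{-t_i/2}\to 0$; the index bound $\mathrm{index}(\tilde M^i_0) = \mathrm{index}(\Sigma_{t_i})\le I_0$ is immediate; the upper volume ratio bound follows from Lemma~\ref{vol} together with parabolic scale-invariance of volume ratios; and the non-collapsing lower bound follows from Lemma~\ref{lm4} combined with the smallness of $|H_{\tilde M^i}|$. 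Proposition~\ref{prop4.2} then furnishes, after passing to a subsequence, a finite set $\mathcal S$ with $|\mathcal S|\le I_0$ and a minimal hypersurface $\tilde\Sigma_\infty$ of index at most $I_0$ with $\mathcal H^{n-2}(\mathrm{sing}\tilde\Sigma_\infty)=0$, such that $\tilde M^i_\sigma\to \tilde\Sigma_\infty$ smoothly with multiplicity at most $N_0$ away from $\mathcal S\cup\mathrm{sing}\tilde\Sigma_\infty$; the comparison above then gives that $\Sigma_{t_i+s}$ converges smoothly to $e^{s/2}\tilde\Sigma_\infty$ away from $e^{s/2}(\mathcal S\cup\mathrm{sing}\tilde\Sigma_\infty)$.

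The conceptual heart of the argument, and the step I expect to be the main obstacle, is showing that $\tilde\Sigma_\infty$ is a cone, so that $e^{s/2}\tilde\Sigma_\infty=\tilde\Sigma_\infty$ and the limit is genuinely independent of $s$. For this I invoke Huisken's monotonicity in rescaled form,
\[
\frac{d}{dt}F(\Sigma_t) \;=\; -(4\pi)^{-n/2}\int_{\Sigma_t}\bigl(H-\tfrac12\langle x,n\rangle\bigr)^2 e^{-|x|^2/4}\,dV \;\le\; 0.
\]
Since $F(\Sigma_t)\ge 0$, the right-hand side is integrable on $[0,\infty)$, so along a further subsequence $\int_{\Sigma_{t_i}}\bigl(H-\tfrac12\langle x,n\rangle\bigr)^2 e^{-|x|^2/4}\,dV\to 0$. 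Combined with $|H|\le\Lambda_0 e^{-t_i/2}\to 0$, this forces $\langle x,n\rangle = 0$ almost everywhere on $\tilde\Sigma_\infty$, so $\tilde\Sigma_\infty$ is a stationary cone; setting $\Sigma_\infty:=\tilde\Sigma_\infty$ and $\mathcal S_0:=\mathcal S\subset\mathrm{reg}\Sigma_\infty$, dilation invariance gives $e^{s/2}\tilde\Sigma_\infty=\Sigma_\infty$ and $e^{s/2}\mathrm{sing}\Sigma_\infty=\mathrm{sing}\Sigma_\infty$, yielding exactly the exceptional set $\{(x,t): x\in e^{t/2}\mathcal S_0\}\cup\{(x,t): x\in\mathrm{sing}\Sigma_\infty\}$ appearing in the statement. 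Stability of $\Sigma_\infty$ then follows from its finite index via Lemma~\ref{stable-cone}, and the density bound $\Theta(\Sigma_\infty,0)\le N_0$ is inherited from the uniform volume ratio estimate, placing $\Sigma_\infty\in\mathcal C(N_0,n)$. The technical subtlety I anticipate in the cone step is rigorously passing the shrinker residual through the varifold convergence when the limit carries multiplicity; this should be controlled by the smoothness of convergence on $\mathrm{reg}\tilde\Sigma_\infty\setminus\mathcal S$ together with the $\mathcal H^{n-2}$-negligibility of $\mathrm{sing}\tilde\Sigma_\infty$.
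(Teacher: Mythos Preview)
Your overall strategy coincides with the paper's: convert to an ordinary MCF via parabolic rescaling, verify the refined-sequence axioms, invoke Proposition~\ref{prop4.2}, and then use Huisken monotonicity to upgrade the minimal limit to a cone, after which Lemma~\ref{stable-cone} supplies stability. The rescaling you write down is equivalent to the paper's $\tilde\Sigma_{i,s}=\sqrt{1-s}\,\Sigma_{t_i-\log(1-s)}$.

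There is one real gap, in the cone step. You assert that since
\[
g(t):=\int_{\Sigma_t}\bigl(H-\tfrac12\langle x,\mathbf n\rangle\bigr)^2 e^{-|x|^2/4}\,d\mu_t
\]
is integrable on $[0,\infty)$, one may pass to a \emph{further subsequence of the given} $\{t_i\}$ along which $g(t_i)\to 0$. This does not follow: integrability of a nonnegative function does not force pointwise decay along any prescribed sequence (take $g=\sum_n\chi_{[n,\,n+2^{-n}]}$ with $t_i=i$, or a continuous variant built from tent functions). The paper sidesteps this by working with time-integrals rather than pointwise values: for any fixed $a,b>0$,
\[
\int_{t_i-a}^{t_i+b}g(t)\,dt\;\le\;\int_{t_i-a}^\infty g(t)\,dt\;\longrightarrow\;0
\]
for \emph{every} sequence $t_i\to\infty$. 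Since Proposition~\ref{prop4.2} already furnishes smooth convergence on an entire $\sigma$-interval (not merely at $\sigma=0$), this integrated decay passes to the limit and yields $H_\infty-\tfrac{1}{2(1-s)}\langle x,\mathbf n\rangle=0$ on $\tilde\Sigma_\infty$ for each $s$ in a nondegenerate interval; combined with $H_\infty=0$ this forces $\langle x,\mathbf n\rangle\equiv 0$ and hence the cone property. Your argument is easily repaired along these lines, but as written the ``further subsequence'' step does not establish the conclusion for an arbitrary initial sequence $\{t_i\}$.
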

\begin{proof}
 For any $t_i\to +\infty$, we can obtain a refined sequence converging to a limit stationary varifold $\tilde{\Sigma}_\infty$. In fact, for any sequence $t_i\to +\infty$, we
can rescale the flow $\Sigma_t$ by
\begin{equation}
	s=1-e^{-(t-t_i)},\quad \tilde{\Sigma}_{i,s}=\sqrt{1-s}\Sigma_{t_i-\log(1-s)}
\end{equation}
such that for each $i$ the flow $\{\tilde{\Sigma}_{i,s},1-e^{t_i}\leq s<1\}$ is a mean curvature flow such that :
\begin{itemize}
	\item [(a)] For any small $\lambda>0$, the mean curvature of $\tilde{\Sigma}_{i,s}$ satisfies
	\begin{equation}
	\lim\limits_{i\to +\infty}\max\limits_{\tilde{\Sigma}_{i,s}\times[1-e^{t_i},1-\lambda]}|\tilde{H}_i(p,s)|=0;
	\end{equation}
	\item [(b)]   Uniform upper bound on  the index of $\tilde{\Sigma}_{i,s}$;
	\item [(c)] An upper bound holds for the volume ratio;
	\item [(d)]  A lower bound for the volume ratio;
	\item [(e)] There exists a constant $D^\prime>0$ such that $ d(\tilde{\Sigma}_{i,s}, 0) \leq D^\prime$ for any $i$.
\end{itemize}
For any $T_0>2$, small $\lambda\in (0,1)$ and any $s_0\in [-T_0+1,-\lambda]$ the sequence $\{\tilde{\Sigma}_{i,s_0+\tau},-1<\tau<1\}$ is a refined sequence. By Proposition \ref{prop4.2}, a subsequence of $\{\tilde{\Sigma}_{i,s_0+\tau},-1<\tau<1\}$ converges in smooth topology, possibly with multiplicity at most $N_0$, to a stationary varifold $\tilde{\Sigma}_\infty$ away from $\mathrm{sing}\Sigma_\infty$ and a finite set of points $\tilde{\mathcal{S}}=\{q_1,\dots,q_l\}$ . Moreover, $\tilde{\Sigma}_\infty$ and $\tilde{\mathcal{S}}$ are independent
of $T_0$ and $\lambda$.

 Each limit $\tilde{\Sigma}_\infty$ must be a stable cone at origin. In fact, by
Huisken’s monotonicity formula, along the rescaled mean curvature flow \eqref{eq5.1} we have 
\begin{equation}
\int_0^\infty\int_{\Sigma_t}e^{-\frac{|x|^2}{4}}\bigg|H-\frac{1}{2}\langle \mathbf{x}, \mathbf{n}\rangle\bigg|^2d\mu_tdt< \int_{\Sigma_0}e^{-\frac{|x|^2}{4}}<+\infty.
\end{equation}
Noting that  for each  $i$ the flow $\{\tilde{\Sigma}_{i,s},1-e^{t_i}\leq s<1\}$ is a mean curvature flow and we denote the solution by  $\tilde{\mathbf{x}}_{i,s}$. Therefore, for fixed $T_0 > 0$, small $\lambda>0$ and large
$i$ we have
\begin{equation}
\begin{split}
&\lim\limits_{i\to +\infty}\int_{-T_0}^{1-\lambda}ds\int_{\Sigma_t}e^{-\frac{|\tilde{\mathbf{x}}_{i,s}|^2}{4(1-s)}}\bigg|\tilde{H}_{i,s}-\frac{1}{2(1-s)}\langle \tilde{\mathbf{x}}_{i,s}, \mathbf{n}\rangle\bigg|^2d\tilde{\mu}_{i,s}\\
&=\lim\limits_{t_i\to +\infty}\int_{t_i-\log(1+T_0)}^{t_i-\log\lambda}\int_{\Sigma_t}e^{-\frac{|\mathbf{x}|^2}{4}}\bigg|H-\frac{1}{2}\langle \mathbf{x}, \mathbf{n}\rangle\bigg|^2d\mu_t=0.
\end{split}
\end{equation}
Since $\{\tilde{\Sigma}_{i,s},-T_0<s<1-\lambda\}$ converges locally smoothly, possibly with
multiplicity at most $N_0$, to $\tilde{\Sigma}_\infty$ away from $\mathrm{sing}(\tilde{\Sigma}_\infty)\cup\CS_0$, we have
\begin{equation}
	\begin{split}
		\lim\limits_{i\to +\infty}\int_{-T_0}^{1-\lambda}ds\int_{\Sigma_t}e^{-\frac{|\tilde{\mathbf{x}}_\infty|^2}{4(1-s)}}\bigg|\tilde{H}-\frac{1}{2(1-s)}\langle \tilde{\mathbf{x}}_\infty, \mathbf{n}\rangle\bigg|^2d\tilde{\mu}_{\infty,s}=0.
	\end{split}
\end{equation}
Therefore, $\{(\tilde{\Sigma}_\infty,\tilde{\mathbf{x}}_\infty(p,s)),-T_0<s<1-\lambda\}$ satisfies the equation
\begin{equation}
	\tilde{H}-\frac{1}{2(1-s)}\langle \tilde{\mathbf{x}}_\infty,\mathbf{n} \rangle=0
\end{equation}
away from singular set $\mathrm{sing}(\tilde{\Sigma}_\infty)\cup \CS_0$. Since $\tilde{\Sigma}_\infty$ is a  stationary varifold with  index $\leq I_0$, we know that $\tilde{H}=\langle \tilde{\mathbf{x}}_\infty,\mathbf{n}\rangle=0$ and $\tilde{\Sigma}_\infty$ is a stable stationary cone(Lemma \ref{stable-cone}). Let $\mathbf{x}_i(p,t)=\mathbf{x}_i(p,t_i+t)$ and $\Sigma_{i,t}=\Sigma_{t_i+t}$. Since $\{\tilde{\Sigma}_{i,s}-T_0<s<1-\lambda\}$
 converges locally smoothly to $\tilde{\Sigma}_\infty$ away from $\mathrm{sing}\tilde{\Sigma}_\infty\cup\CS_0$, the flow $\{\Sigma_{i,t},-\log(1+T_0)<t<-\log\lambda\}$ also  converges locally smoothly to $\tilde{\Sigma}_\infty$ away from  $\mathrm{sing}\tilde{\Sigma}_\infty\cup\CS$ where ${\mathcal{S}}=\{(x,t)|t\in (-\log(1+T_0)<t<-\log\lambda),x\in e^{\frac{t}{2}}\CS_0\}$. Moreover, $\tilde{\Sigma}_\infty$ and $\CS_0$  are independent of $T_0$ and $\lambda$. The lemma is proved.
\end{proof}
\subsection{Decomposition of spaces }$\\$

In this subsection, we follow the argument in \cite{LW19} to decompose the space and define an almost “monotone decreasing” quantity, which will be used to select time slices such that the limit self-shrinker is L-stable. First, we decompose the space as follows.

\begin{definition}\label{def5.4}
Given a hypersurface $M$ and $x\in \IR^{n+1}$, we define the regularity scale $r_M$ as the supremum of $0\leq r\leq 1$ such that 
	\begin{equation}
		\begin{split}
			\sup\limits_{y\in M\cap  B_r(y)}r|A|(y)\leq 1,
		\end{split}
	\end{equation}
	where $A$ is the second fundamental form. If $y\in \mathrm{sing}M$, we  assume that $|A|(y)=+\infty$.
\end{definition}
\begin{definition}\label{dec}
Fix large $R>0$ and small $\varepsilon>0$.
\begin{itemize}
	\item [(1)] We define the set $\mathbf{S}=\mathbf{S}(\Sigma_t,\varepsilon,R)=\{y\in \Sigma_t||y|<R,|A|_{\Sigma_t}(y)>\varepsilon^{-1}\}$.
	\item [(2)] The ball $B_R(0)$ can be decomposed into three parts as follows:
\end{itemize}
\end{definition}
\begin{itemize}
	\item  the high curvature part $\mathbf{H}$, which is defined by
	\begin{equation*}
		\begin{split}
			\mathbf{H}=\mathbf{H}(\Sigma_t,\varepsilon,R)=\biggl\{x\in \IR^{n+1}\bigg||x|<R,d(x,\mathbf{S})<\frac{\varepsilon}{2}\biggr\}
		\end{split}
	\end{equation*}
	\item the thick part $\mathbf{TK}$, which is defined by 
	
	\begin{equation*}
		\begin{split}
		\quad	\mathbf{TK}&=\mathbf{TK}(\Sigma_t,\varepsilon,\zeta,R)\\
			&=\biggl\{x\in \IR^{n+1}\bigg| |x|<R, \text{ there is a continuous curve }\gamma\subset B_R(0)\setminus(\mathbf{H}\cup \Sigma_t)\\
			&\qquad \text{ connecting  } x \text{ and some } y \text{ with } B(y,\zeta)\subset B_R(0)\setminus(\mathbf{H}\cup \Sigma_t)\biggr\},
		\end{split}
	\end{equation*}
	\item  the thin part $\mathbf{TN}$, which is defined by $\mathbf{TN}=\mathbf{TN}(\Sigma_t,\varepsilon,\zeta,R)=B_R(0)\setminus(\mathbf{H}\cup \mathbf{TK}) $.
\end{itemize}
 The high-curvature zone $\mathbf{H}$ is a neighborhood of points exhibiting a large second fundamental form. 
The thin region $\mathbf{TN}$ occupies the space between the top and bottom sheets. 
The thick region $\mathbf{TK}$, on the other hand, is the union of path-connected components of the region lying outside the sheets(cf. figure \ref{fig3}).

\begin{figure}[htbp]
    \centering
    \includegraphics[width=0.5\textwidth]{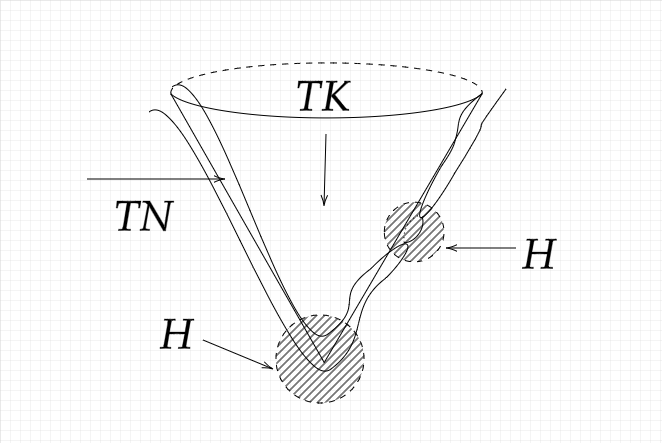}
    \caption{Decomposition of spaces.}
    \label{fig3}
\end{figure}

In the rest of this subsection, we concentrate on the quantity $|\mathbf{TN}|$. 
We first establish that $|\mathbf{TN}| \to 0$ as $t \to \infty$; 
subsequently, we choose a sequence $t_i \to \infty$ such that the value $|\mathbf{TN}|(t_i)$ serves as a uniform upper bound for all subsequent values.

Since every sequence in $\{\Sigma_t\}$ admits a subsequence converging to a stable cone, we first prove that, for an appropriate parameter $\zeta$, the volume of the corresponding $\mathbf{TN}$ vanishes; equivalently, $\mathbf{TN}$ is empty.
\begin{definition}
	For any $N>0$, we denote by $\CC(N,n)$ the space of all stable minimal cones $\mathbf{C}\subset\IR^{n+1}$ satisfying $\CH^{n-2}(\mathbf{C})=0$, $\Theta(\mathbf{C},x)=1$ for a.e. $x\in \mathbf{C}$ and
	\begin{equation}
		\begin{split}
			\Theta(\mathbf{C},0)=\frac{\mathrm{Vol}(B_r(0)\cap \mathbf{C})}{\omega_n r^n}\leq N.
		\end{split}
	\end{equation}
\end{definition}
\begin{proposition}
$\CC(N,n)$ 	is compact in the varifold sense.
\end{proposition}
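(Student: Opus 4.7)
The plan is to reduce to the classical Schoen-Simon compactness for stable minimal hypersurfaces, then use the non-existence of stable minimal hypersurfaces in $\IS^n$ to force multiplicity one in the limit.

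Given a sequence $\{\mathbf{C}_i\} \subset \CC(N,n)$, the cone structure together with $\Theta(\mathbf{C}_i, 0) \leq N$ yields the uniform area bound $\CH^n(\mathbf{C}_i \cap B_R(0)) \leq N \omega_n R^n$ for every $R > 0$. Combined with stability and $\CH^{n-2}(\mathrm{sing}\mathbf{C}_i) = 0$, Theorem \ref{thm3.2} applied on a sequence of enlarging balls and a diagonal extraction produces a subsequence with $\mathbf{C}_i \rightharpoonup V$, where $\mathrm{spt}\,V = \overline{M}$ and $M$ is a stable minimal hypersurface with $\CH^\alpha(\mathrm{sing}\,M) = 0$ for all $\alpha > n-7$. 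Since each $\mathbf{C}_i$ is invariant under every dilation $\lambda\cdot$ and varifold convergence commutes with pushforward, the limit $V$ is again a cone, which I will call $\mathbf{C}_\infty$.

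Next I would upgrade to multiplicity-one convergence by passing to cross-sections. Let $\Gamma_i := \mathbf{C}_i \cap \IS^n$ and $\Gamma_\infty := \mathbf{C}_\infty \cap \IS^n$; these are minimal hypersurfaces in $\IS^n$ with $\CH^{n-3}(\mathrm{sing}\,\Gamma_i) = \CH^{n-3}(\mathrm{sing}\,\Gamma_\infty) = 0$, inherited from the corresponding coning relations. Schoen-Simon regularity makes the convergence $\mathbf{C}_i \to \mathbf{C}_\infty$ smooth away from $\mathrm{sing}\,\mathbf{C}_\infty$, and in particular produces smooth convergence (a priori with integer multiplicity) $\Gamma_i \to \Gamma_\infty$ away from $\mathrm{sing}\,\Gamma_\infty$. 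Lemma \ref{lm5.7} then applies verbatim and forces the multiplicity to equal one on the sphere; coning back yields multiplicity one almost everywhere for $\mathbf{C}_i \to \mathbf{C}_\infty$ in $\IR^{n+1}$. It then remains to verify membership in $\CC(N,n)$: stability passes to $\mathbf{C}_\infty$ via Lemma \ref{index-bound} applied to the smooth, multiplicity-one convergence of index-zero hypersurfaces, upper semi-continuity of density under varifold convergence gives $\Theta(\mathbf{C}_\infty, 0) \leq N$, the relation $\CH^{n-2}(\mathrm{sing}\,\mathbf{C}_\infty) = 0$ holds since $n-2 > n-7$, and unit density a.e. was just established.

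The hard part is the multiplicity-one upgrade, since without it the varifold limit could double up and leave $\CC(N,n)$. This step essentially uses both the cone structure, which places the cross-sections inside the compact ambient $\IS^n$, and Lemma \ref{lm1.5}, packaged through Lemma \ref{lm5.7}: any higher-multiplicity limit would produce a nowhere-vanishing Jacobi field on $\mathrm{reg}\,\Gamma_\infty$, turning $\Gamma_\infty$ into a stable stationary varifold in $\IS^n$ and contradicting Lemma \ref{lm1.5}.
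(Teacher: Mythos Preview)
Your proposal is correct and follows essentially the same route as the paper: extract a varifold limit using Schoen--Simon compactness, observe that the limit is again a cone, pass to the links $\Gamma_i=\mathbf{C}_i\cap\IS^n$, and invoke Lemma~\ref{lm5.7} (ultimately Lemma~\ref{lm1.5}) to rule out higher multiplicity. The paper's write-up is terser---it simply asserts the limit lies in $\CC(N,n)$ and that $\Gamma_i\to\Gamma$ smoothly at regular points via Theorem~\ref{thm3.2}---but your added verification that the limit is a cone, that stability persists (via Lemma~\ref{index-bound}), and that the density bound and singular-set estimate carry over is exactly what fills in those steps.
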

\begin{proof}
	Suppose that $\{\mathbf{C}_i\}\in \CC(N,n)$ is a  sequence of stable stationary cones. Then, there exists  a $\mathbf{C}\in \CC(N,n)$ such that $\mathbf{C}_i$ converges to $\mathbf{C}$ with multiplicity $m$. Assume that $\mathrm{link}(\mathbf{C}_i):=\mathbf{C}_i\cap \partial B^{n+1}_1(0)=\Gamma_i$ and $\mathrm{link}(\mathbf{C})=\Gamma$.  Hence, $\Gamma_i$ converges to $\Gamma$ with multiplicity $m$. Noting that $\mathbf{C}_i$ and $\mathbf{C}$ are stationary in $\IR^{n+1}$, $\Gamma_i$ and $\Gamma$ are stationary in $\IS^{n}(1)$.  Since $\CH^{n-7+\alpha}(\mathrm{sing}\mathbf{C}_i)=\CH^{n-7+\alpha}(\mathrm{sing}\mathbf{C})=0$ for any $\alpha>0$ and $x\in \mathrm{reg}(\Gamma_i)$ if  $x\in \mathrm{reg}(\mathbf{C}_i)$ , $\CH^{n-7+\alpha}(\mathrm{sing}\Gamma_i)=\CH^{n-7+\alpha}(\mathrm{sing}\Gamma)=0$ for any $\alpha>0$. If $x$ is a regular point of $\Gamma$, then $x$ is a regular point of $\mathbf{C}$. So, by Theorem \ref{thm3.2}, $\Gamma_i$ smoothly converges to $\Gamma$ at $x$ in $\IS^{n}$. Lemma \ref{lm5.7} implies $m=1$, which completes the proof.
\end{proof}
We next introduce a function $s$ to rule out the occurrence of a multi-sheeted 
phenomenon on the limiting cone. Consequently, any point outside $\mathbf{H}$ 
can be connected to a point away from $\Sigma$, and hence we conclude that $\mathbf{TN}=\emptyset$(cf. figure \ref{fig5}).
\begin{lemma}\label{lm5.8}
	Let  $R,N,\varepsilon>0$. For any $\mathbf{C}\in \CC(N,n)$ and $x\in \mathrm{reg}  \mathbf{C}$, we define the supremum of the radius $s$ such that 
	\begin{equation}
		\begin{split}
			B_s(x+s\mathbf{n}(x))\cap \mathbf{C}=\emptyset,\quad B_s(x-s\mathbf{n}(x))\cap \mathbf{C}=\emptyset,
		\end{split}
	\end{equation}
	where $\mathbf{n}(x)$ denotes the normal vector of $\mathbf{C}$ at $x$. Then there exists $\zeta_0(R,N,\rho)>0$ such that for any $\mathbf{C}\in\CC(N,\rho)$ and $x\in (\mathbf{C} \cap B_R(0))\setminus \mathbf{H}(\mathbf{C},\varepsilon,R) $ we have
	\begin{equation}
		\begin{split}
			s(x;\CC)\geq \zeta_0.
		\end{split}
	\end{equation}
\end{lemma}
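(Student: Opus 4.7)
My plan is to argue by contradiction, combining the compactness of $\mathcal{C}(N,n)$ established in the preceding proposition with Allard's regularity theorem (Theorem \ref{allard}). Suppose the claim fails; then there are sequences $\mathbf{C}_i \in \mathcal{C}(N,n)$ and $x_i \in (\mathbf{C}_i \cap B_R(0)) \setminus \mathbf{H}(\mathbf{C}_i,\varepsilon,R)$ with $s_{\mathbf{C}_i}(x_i)\to 0$. Passing to a subsequence, the preceding proposition gives a multiplicity-one varifold limit $\mathbf{C}_i \rightharpoonup \mathbf{C}_\infty \in \mathcal{C}(N,n)$, and we may assume $x_i\to x_\infty \in \mathbf{C}_\infty\cap \overline{B_R(0)}$.

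Next I would use the hypothesis $x_i\notin \mathbf{H}(\mathbf{C}_i,\varepsilon,R)$ to upgrade this convergence to smooth convergence near $x_\infty$. By the definition of $\mathbf{S}$ and $\mathbf{H}$, avoiding $\mathbf{H}$ gives a uniform lower bound, depending only on $\varepsilon$, on the regularity scale of $\mathbf{C}_i$ at $x_i$, so $|A_{\mathbf{C}_i}|$ is uniformly bounded on a ball of some fixed radius $\rho=\rho(\varepsilon)$ about $x_i$. Together with the multiplicity-one varifold convergence, Allard's theorem gives $C^{1,\alpha}$ convergence in $B_\rho(x_\infty)$, and elliptic bootstrapping for the minimal surface equation (both $\mathbf{C}_i$ and $\mathbf{C}_\infty$ are stationary cones) upgrades this to smooth convergence on $B_{\rho/2}(x_\infty)$. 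In particular, $x_\infty$ is a regular point of $\mathbf{C}_\infty$ and the unit normals $\mathbf{n}_{\mathbf{C}_i}(x_i)$ converge to $\mathbf{n}_{\mathbf{C}_\infty}(x_\infty)$.

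To conclude, since $\mathbf{C}_\infty$ is a properly embedded smooth hypersurface near $x_\infty$, choose $s_\infty>0$ small enough that the two open balls $B_{s_\infty}(x_\infty \pm s_\infty\, \mathbf{n}_{\mathbf{C}_\infty}(x_\infty))$ are disjoint from $\mathbf{C}_\infty$. Multiplicity-one smooth convergence in $B_{\rho/2}(x_\infty)$ then implies that for all large $i$ the balls $B_{s_\infty/2}(x_i \pm (s_\infty/2)\,\mathbf{n}_{\mathbf{C}_i}(x_i))$ are disjoint from $\mathbf{C}_i$, forcing $s_{\mathbf{C}_i}(x_i)\ge s_\infty/2$, which contradicts $s_{\mathbf{C}_i}(x_i)\to 0$. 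The constant $\zeta_0$ one reads off then depends only on $R$, $N$ and $\varepsilon$ (the paper writes $\rho$ in place of $\varepsilon$).

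The main difficulty is ensuring that no extraneous sheets of $\mathbf{C}_i$, arising from distant parts of the cone, drift into the putative disjoint balls near $x_i$; the regularity-scale bound from $x_i\notin \mathbf{H}$ controls only the local sheet through $x_i$. This is precisely where the multiplicity-one conclusion from the previous proposition is essential: it rules out additional sheets of $\mathbf{C}_i$ converging to the tangent plane at $x_\infty$, since such a stack of sheets would produce a nontrivial nonnegative Jacobi field on $\mathrm{reg}(\mathbf{C}_\infty)$ and hence a stable minimal hypersurface in $\mathbb{S}^n$, contradicting Lemma \ref{lm1.5} (via Lemma \ref{lm5.7}).
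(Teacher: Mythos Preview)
Your proposal is correct and follows essentially the same approach as the paper: a contradiction argument using the compactness of $\CC(N,n)$, the uniform regularity-scale bound coming from $x_i\notin\mathbf{H}$, and the resulting smooth multiplicity-one convergence near $x_\infty$ to force $s_{\mathbf{C}_\infty}(x_\infty)=0$. Your write-up is in fact more careful than the paper's, which compresses the smooth-convergence step and the exclusion of extra sheets into a single sentence; your explicit invocation of the preceding proposition and Lemma~\ref{lm5.7} to rule out higher multiplicity is a helpful clarification.
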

\begin{proof}
	Suppose not. For any $j$ there exist $\mathbf{C}_j$ and $x_j\in (\mathbf{C}_j \cap B_R(0))\setminus \mathbf{S}(\mathbf{C}_j,\varepsilon,R)$, such that
	\begin{equation}\label{sep}
		\begin{split}
			s(x_j;\mathbf{C}_j)\leq j^{-1}.
		\end{split}
	\end{equation} 
	 Assume that $\mathbf{C}_j\rightharpoonup \mathbf{C}_\infty$ in varifold sense  and $x_j\to x_\infty$ as $j\to \infty$. Since $x_j\in (\mathbf{C}_j \cap B_R(0))\setminus \mathbf{H}(\mathbf{C}_j,\varepsilon,R)$, we know that $\mathbf{C}_j$ smoothly converges to $\mathbf{C}$ near $x_\infty$. \eqref{sep} implies that $s_{\mathbf{C}_\infty}(x_\infty)\leq\liminf_{j\to \infty} s(x_j;\mathbf{C}_j)=0 $, which contradicts that $s(x_\infty;\mathbf{C}_\infty)>0$.
\end{proof}

\begin{figure}[htbp]
    \centering
    \includegraphics[width=0.4\textwidth]{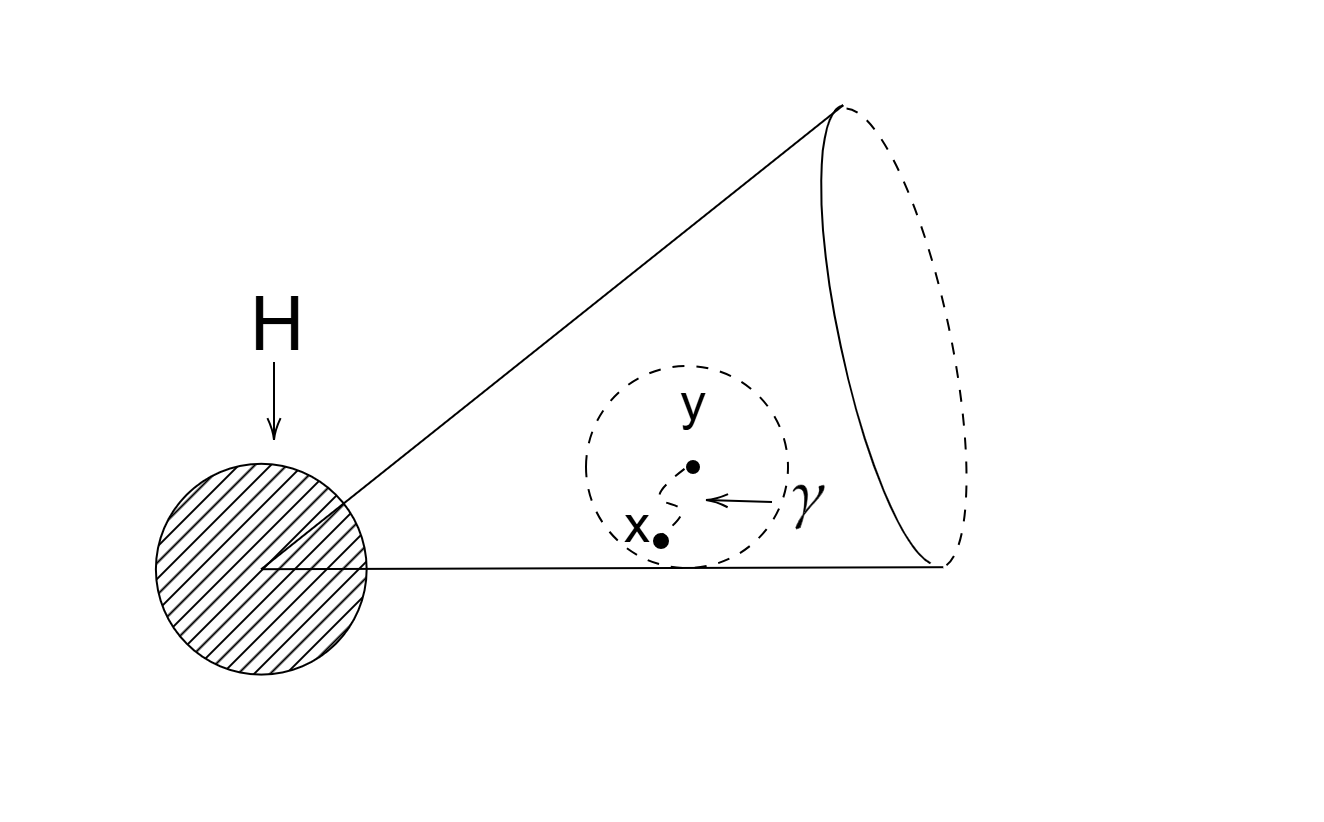}
    \caption{Separation}
    \label{fig5}
\end{figure}
A direct corollary of Lemma \ref{lm5.8} is the following result

\begin{lemma}\label{lm5.9}
For any $R,N,\varepsilon>0$, there exists a constant $\zeta_0(R,N,\rho,\varepsilon)>0$ such that for any $\zeta\in (0,\zeta_0)$, we have
\begin{equation}\label{eq5.25}
	\begin{split}
		|\mathbf{TN}(\mathbf{C},\varepsilon,\zeta,R)|=0\quad \text{for all }\mathbf{C}\in \CC(N,n).
	\end{split}
\end{equation}
Here the notation $|\Omega|$ denotes the volume of $\Omega$ with respect to the standard metric on $\IR^{n+1}$.
\end{lemma}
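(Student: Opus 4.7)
Since $\mathbf{C}\in \CC(N,n)$ is a countably $n$-rectifiable set in $\IR^{n+1}$, its $(n{+}1)$-dimensional Lebesgue measure vanishes. From the inclusion
$$\mathbf{TN}\subset \mathbf{C}\cup \bigl(W\setminus \mathbf{TK}\bigr),\qquad W:=B_R(0)\setminus(\mathbf{H}(\mathbf{C},\varepsilon,R)\cup\mathbf{C}),$$
it suffices to show that for $\zeta$ small every connected component $V$ of $W$ contains an open ball of radius $\zeta$. Thus the task reduces to the \emph{uniform inradius bound}: there exists $\zeta_0=\zeta_0(R,N,n,\varepsilon)>0$ such that, for every $\mathbf{C}\in \CC(N,n)$, every connected component of $W$ contains an open ball of radius $\zeta_0$.

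I would prove the inradius bound by contradiction and compactness. If it fails, there exist $\zeta_j\downarrow 0$, $\mathbf{C}_j\in\CC(N,n)$, and components $V_j\subset W_j$ of inradius $<\zeta_j$; let $y_j\in V_j$ be the center of the maximal inscribed ball in $V_j$. By the preceding compactness of $\CC(N,n)$ together with Lemma \ref{lm5.7} applied to the links, along a subsequence $\mathbf{C}_j\rightharpoonup \mathbf{C}_\infty\in\CC(N,n)$ as varifolds, with smooth multiplicity-one convergence on $\mathrm{reg}(\mathbf{C}_\infty)$. Consequently $\overline{\mathbf{H}_j}$ Hausdorff-converges to $\overline{\mathbf{H}_\infty}$ inside $\overline{B_R(0)}$, and $y_j\to y_\infty\in \overline{B_R(0)}$. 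If $y_\infty\in W_\infty$, then smooth convergence forces $B_\tau(y_\infty)\subset V_j$ with $\tau=\mathrm{dist}(y_\infty,\partial W_\infty)/2>0$ for $j$ large, contradicting $\mathrm{inrad}(V_j)\to 0$.

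Otherwise $y_\infty\in\partial W_\infty\subset \mathbf{C}_\infty\cup\partial\mathbf{H}_\infty\cup\partial B_R(0)$. Using the cone structure of $\mathbf{C}_\infty$ together with $\dim\mathrm{sing}(\mathbf{C}_\infty)\le n-7$ (Lemma \ref{stable-cone}), one slides $y_\infty$ slightly to reach a regular low-curvature point $p_\infty\in \mathbf{C}_\infty\setminus\mathbf{H}_\infty$ with $\mathrm{dist}(p_\infty,\overline{\mathbf{H}_\infty}\cup\partial B_R(0))\ge 2\zeta_0^*$, where $\zeta_0^*=\zeta_0^*(R,N,n,\varepsilon)>0$ is provided by Lemma \ref{lm5.8}. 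Lemma \ref{lm5.8} then yields a tangent ball of radius $\zeta_0^*$ at $p_\infty$ on the side from which $V_j$ accumulates, lying entirely in $W_\infty$; smooth convergence of $\mathbf{C}_j$ near $p_\infty$ transports this ball into $W_j$ at a nearby regular point $p_j\in\mathbf{C}_j$. A short path inside $W_j$ (on the appropriate side of the sheet) from this ball to $y_j$ forces the ball to lie in the same component $V_j$, giving $\mathrm{inrad}(V_j)\ge\zeta_0^*/2$, a contradiction. The main obstacle is this last sliding-and-connecting step: producing $p_\infty$ far from $\mathrm{sing}(\mathbf{C}_\infty)\cup\partial\mathbf{H}_\infty\cup\partial B_R(0)$ on the correct side of the limiting sheet, and connecting the transported tangent ball to $y_j$ by a path contained in the single component $V_j$; this is a delicate geometric argument that crucially uses (a) the cone structure, to move along sheets without crossing them, (b) the smooth multiplicity-one convergence on $\mathrm{reg}(\mathbf{C}_\infty)$ from Lemma \ref{lm5.7}, and (c) the codimension-7 estimate on $\mathrm{sing}(\mathbf{C}_\infty)$ from Lemma \ref{stable-cone}.
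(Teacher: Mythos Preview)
Your reduction to a uniform inradius bound for the components of $W=B_R(0)\setminus(\mathbf{H}\cup\mathbf{C})$ is correct, but the subsequent compactness argument is unnecessary: Lemma~\ref{lm5.8} already provides a $\zeta_0$ that is \emph{uniform over all} $\mathbf{C}\in\CC(N,n)$, so no further limit in $\mathbf{C}$ needs to be taken. The paper's proof is essentially one line. With $\zeta_0$ as in Lemma~\ref{lm5.8}, every point $x\in(\mathbf{C}\cap B_R(0))\setminus\mathbf{H}$ carries tangent balls of radius $\zeta_0$ on both sides that are disjoint from $\mathbf{C}$; these balls (after a routine shrinking to stay inside $B_R(0)\setminus\mathbf{H}$, still uniform in $\mathbf{C}$) witness that both local components of $W$ near $x$ belong to $\mathbf{TK}$. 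Since every component of $W$ has such a regular boundary point, one gets $W\subset\mathbf{TK}$ and hence $\mathbf{TN}\subset\mathbf{C}$, which has $(n{+}1)$-measure zero.

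Your compactness argument effectively re-proves Lemma~\ref{lm5.8} in a more elaborate setting, and the ``sliding and connecting'' step you flag as the main obstacle is exactly what is bypassed by working with a fixed $\mathbf{C}$ and invoking Lemma~\ref{lm5.8} directly. Your route would ultimately succeed, but it is considerably longer and introduces genuine technical complications --- tracking how components of $W_j$ behave under varifold convergence, handling the several boundary cases for $y_\infty$, and producing a path inside a \emph{specific} component $V_j$ --- none of which are needed once the uniformity in Lemma~\ref{lm5.8} is recognized.
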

\begin{proof}
For any  $R,N,\varepsilon>0$, we choose $\zeta$ the same constant in Lemma  \ref{lm5.8}. Thus, equation \eqref{eq5.25} follows from Lemma  \ref{lm5.8} and the definition of $\mathbf{TN}$.
\end{proof}
Using Lemma \ref{lm5.9} we show that the quantity $\mathbf{TN}$ along the flow will tend to zero.
\begin{lemma}\label{lm5.10}
	Fix $R,N,\varepsilon>0$. Under the assumption of Theorem \ref{mul-one}, there exists a constant $\zeta_0(R,N,n,\varepsilon)>0$ such that for any  $\zeta\in (0,\zeta_0)$, we have 
	\begin{equation}
		\begin{split}
			\lim\limits_{t\to \infty}|\mathrm{TN}(\Sigma_t,\varepsilon,\zeta,R)|=0.
		\end{split}
	\end{equation}
\end{lemma}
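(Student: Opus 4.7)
The plan is to argue by contradiction, combining Lemma \ref{lm5.9} (vanishing of the thin part for every cone in $\CC(N_0,n)$) with Lemma \ref{lm5.2} (subsequential smooth convergence of $\Sigma_{t_i}$ to such a cone). Let $\zeta_0=\zeta_0(R,N_0,n,\varepsilon)$ be the constant furnished by Lemma \ref{lm5.9} applied to the class $\CC(N_0,n)$, where $N_0$ is the area ratio bound from Lemma \ref{vol}. Suppose the conclusion fails: then there exist $\zeta\in (0,\zeta_0)$, $\delta>0$, and $t_i\to +\infty$ with $|\mathbf{TN}(\Sigma_{t_i},\varepsilon,\zeta,R)|\geq\delta$ for every $i$.

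By Lemma \ref{lm5.2}, after passing to a subsequence we may assume $\Sigma_{t_i}$ converges in smooth topology, with multiplicity at most $N_0$, to a stable stationary cone $\Sigma_\infty\in \CC(N_0,n)$, the convergence being smooth away from $\CS_0\cup\mathrm{sing}(\Sigma_\infty)$. By our choice of $\zeta_0$ and Lemma \ref{lm5.9}, $|\mathbf{TN}(\Sigma_\infty,\varepsilon,\zeta,R)|=0$. Fix an arbitrary $\eta>0$ and cover the exceptional set $(\CS_0\cup\mathrm{sing}(\Sigma_\infty))\cap \bar B_R(0)$ by an open neighborhood $U$ with $|U|<\eta$; this is possible because $\CS_0$ is finite and $\CH^{n-7+\alpha}(\mathrm{sing}(\Sigma_\infty))=0$ for every $\alpha>0$, so the exceptional set has vanishing $(n+1)$-dimensional Lebesgue measure and outer regularity applies.

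Set $K=\bar B_R(0)\setminus U$. On $K$ the convergence is smooth, so for large $i$ the hypersurface $\Sigma_{t_i}\cap K$ (together with $\mathbf{H}(\Sigma_{t_i})\cap K$) lies in an arbitrarily small Hausdorff neighborhood of $\Sigma_\infty\cap K$ (resp.~$\mathbf{H}(\Sigma_\infty)\cap K$). The heart of the argument is to show $|\mathbf{TN}(\Sigma_{t_i},\varepsilon,\zeta,R)\cap K|\to 0$. Pick $\zeta'=(1+\sigma)\zeta<\zeta_0$; since $|\mathbf{TN}(\Sigma_\infty,\varepsilon,\zeta',R)|=0$, almost every $x\in K\setminus(\mathbf{H}(\Sigma_\infty)\cup\Sigma_\infty)$ lies in $\mathbf{TK}(\Sigma_\infty,\varepsilon,\zeta',R)$. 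Choose a witnessing path $\gamma\subset B_R\setminus(\mathbf{H}(\Sigma_\infty)\cup\Sigma_\infty)$ from $x$ to a center $y$ with $B_{\zeta'}(y)$ in the same complement. Smooth $C^2$ convergence of sheets, together with the extrinsic Hausdorff convergence from Proposition \ref{prop4.2}, implies that for sufficiently large $i$ both $\gamma$ and the slightly shrunk ball $B_\zeta(y)$ lie in $B_R\setminus(\mathbf{H}(\Sigma_{t_i})\cup\Sigma_{t_i})$, whence $x\in \mathbf{TK}(\Sigma_{t_i},\varepsilon,\zeta,R)$. Combining the two estimates gives $|\mathbf{TN}(\Sigma_{t_i},\varepsilon,\zeta,R)|\leq|U|+|\mathbf{TN}(\Sigma_{t_i},\varepsilon,\zeta,R)\cap K|<2\eta$ for large $i$, contradicting the lower bound $\delta$ once $\eta<\delta/2$.

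The main obstacle is the persistence of witnessing paths when the multiplicity of the convergence is $m>1$: in that case several sheets of $\Sigma_{t_i}$ collapse onto a single sheet of $\Sigma_\infty$, and the thin interlayer slab between consecutive sheets automatically lies in $\mathbf{TN}(\Sigma_{t_i},\varepsilon,\zeta,R)$. What saves the argument is that $C^2$-convergence of the graphs forces the interlayer width to tend to $0$ uniformly on $K$, so the total volume of such slabs is bounded by (uniformly bounded sheet area)$\times o(1)=o(1)$. One must also choose the path $\gamma$ so that the perturbed version avoids both $U$ and these shrinking slabs, which can be arranged because $\mathbf{TK}(\Sigma_\infty,\varepsilon,\zeta',R)$ is open and the obstructions are either fixed (the set $U$, whose measure is already controlled) or have width $o(1)$.
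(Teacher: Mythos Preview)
Your argument is correct and follows essentially the same strategy as the paper: combine Lemma \ref{lm5.2} (subsequential convergence to a cone $\Sigma_\infty\in\CC(N_0,n)$) with Lemma \ref{lm5.9} ($|\mathbf{TN}(\Sigma_\infty,\varepsilon,\zeta,R)|=0$), then argue that $\mathbf{TN}(\Sigma_{t_i})$ converges to $\mathbf{TN}(\Sigma_\infty)$ away from the exceptional set. The paper compresses this last step into the single assertion $\mathbf{TN}(\Sigma_{t_i},\varepsilon,\zeta,R)\to \mathbf{TN}(\Sigma_\infty,\varepsilon,\zeta,R)\setminus B_{\varepsilon/2}(\CS_0)$, whereas you unpack it via the contradiction setup, the open neighborhood $U$ of the exceptional set, and the explicit path-witnessing argument for $\mathbf{TK}$; your version also explicitly treats the interlayer slabs in the higher-multiplicity case, which the paper leaves implicit.
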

\begin{proof}
	For any $\zeta>0$, by definition \ref{dec} we have
	\begin{equation}
		\begin{split}
			\mathbf{TN}(\Sigma_{t_i},\varepsilon,\zeta,R)\to \mathbf{TN}(\Sigma_\infty,\varepsilon,\zeta,R)\setminus B_{\frac{\varepsilon}{2}}(\CS_0),
		\end{split}
	\end{equation}
in the local Hausdorff sense(cf. figure \ref{fig3}).	Here $B_{\varepsilon}(\CS_0)=\cup_{p\in \CS_0}B_\varepsilon(p)$. Therefore, by Lemma \ref{lm5.9} we have
	\begin{equation}
		\begin{split}
			\lim\limits_{t_i\to +\infty}|\mathbf{TN}(\Sigma_{t_i},\varepsilon,\zeta,R)|\leq |\mathbf{TN}(\Sigma_{\infty},\varepsilon,\zeta,R)|=0,
		\end{split}
	\end{equation}
	where $\zeta\in (0,\zeta_0)$ and $\zeta_0$ is the constant in Lemma \ref{lm5.9}. The lemma is proved.
\end{proof}

Since the volume tends to zero, we can choose a sequence time $\{t_i\}$ such that (cf. \cite[Lemma 4.7]{LW19}):
\begin{lemma}\label{lm5-9}
	Fix $R>0$ and $\tau\in (0,1)$. Let $\{t_i\}$ be any sequence as in Lemma \ref{lm5.2}. If the
	multiplicity of the convergence in Lemma \ref{lm5.2} is more than one, then for any $\varepsilon>0,\zeta\in (0,\zeta_0)$, there exists $i_0>0$ such that for any $i\geq i_0$ we have 
	\begin{equation}
		\begin{split}
			\inf\limits_{t\in [t_i-\tau,t_i]}|\mathbf{TN}(\Sigma_{t},\varepsilon,\zeta,R)|>0.
		\end{split}
	\end{equation}
\end{lemma}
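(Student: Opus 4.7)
The strategy is to exploit the multi-sheet nature of the convergence. If the multiplicity in Lemma \ref{lm5.2} is $m\geq 2$, then by the Remark following that lemma this multiplicity is a constant integer $\geq 2$ on the regular part of $\Sigma_\infty$. Near any regular point $p_0$ of $\Sigma_\infty$, the flow $\Sigma_{t_i+t}$ accumulates as $m$ nearly parallel $C^2$-graphs converging smoothly to the single sheet of $\Sigma_\infty$. The open slab between two consecutive such graphs is a thin region of positive volume, and my plan is to show that this slab lies in $\mathbf{TN}(\Sigma_{t_i+t},\varepsilon,\zeta,R)$ for every $t\in[t_i-\tau,t_i]$ once $i$ is large, yielding the desired lower bound on $|\mathbf{TN}|$.

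Concretely, choose $T>\tau+1$ in Lemma \ref{lm5.2} so that, after passing to a subsequence, $\Sigma_{t_i+t}\to\Sigma_\infty$ smoothly on $t\in(-T,T)$ away from $\mathrm{sing}(\Sigma_\infty)\cup\bigcup_{s\in[-\tau,0]}e^{s/2}\mathcal{S}_0$, the latter being a finite union of short curves since $\mathcal{S}_0$ is finite. Pick $p_0\in\mathrm{reg}(\Sigma_\infty)$ together with a small ball $B_{2r_0}(p_0)\subset B_R(0)$ that avoids this singular locus and on which $\Sigma_\infty$ is a single smooth graph over $T_{p_0}\Sigma_\infty$. For $i$ sufficiently large and every $t\in[-\tau,0]$, $\Sigma_{t_i+t}\cap B_{r_0}(p_0)$ is the disjoint union of $m$ smooth graphs $\{y=h_k^{i,t}(x)\}_{k=1}^{m}$ over the disk $D:=T_{p_0}\Sigma_\infty\cap B_{r_0}(p_0)$ with $h_1^{i,t}<\cdots<h_m^{i,t}$ and $\|h_k^{i,t}\|_{C^2}\to 0$ uniformly. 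Set $W^{i,t}:=\{(x,y):x\in D,\ h_1^{i,t}(x)<y<h_2^{i,t}(x)\}$; the strict inequality $h_1^{i,t}<h_2^{i,t}$ and joint smoothness in $(x,t)$ yield, by compactness of $D\times[-\tau,0]$, that $\inf_{t\in[-\tau,0]}|W^{i,t}|>0$ for each large $i$.

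It remains to show $W^{i,t}\subset\mathbf{TN}$. Because the smooth convergence bounds $|A|$ uniformly on $B_{r_0}(p_0)$, the regularity scale of $\Sigma_{t_i+t}$ there becomes large for $i$ large, so $W^{i,t}\cap\mathbf{H}=\emptyset$. For the crucial condition $W^{i,t}\cap\mathbf{TK}=\emptyset$, shrink $r_0$ so that the vertical gap $h_2^{i,t}-h_1^{i,t}<\zeta/2$, and let $V^{i,t}$ be the connected component of $B_R(0)\setminus(\mathbf{H}\cup\Sigma_{t_i+t})$ containing $W^{i,t}$. Away from the singular set, smooth $m$-sheet convergence forces $V^{i,t}$ to stay between two consecutive sheets of the local multi-sheet cover, since any path in $V^{i,t}$ is forbidden from crossing $\Sigma_{t_i+t}$; hence $V^{i,t}$ is trapped in a tubular neighborhood of $\Sigma_\infty$ of thickness $o(1)$ as $i\to\infty$. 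Near the singular set, blow-up of $|A|$ places those points in $\mathbf{H}$, which is also excluded from the arena defining $\mathbf{TK}$ and thereby seals off any potential ``shortcut'' from $V^{i,t}$ to the ambient thick region. Hence $V^{i,t}$ admits no ball of radius $\zeta$, so $W^{i,t}\subset\mathbf{TN}$ and the lemma follows. The principal obstacle is the rigorous topological control of $V^{i,t}$: one must verify that excising $\mathbf{H}$ does not create a new channel through which the between-sheets slab is path-connected, inside the complement of $\mathbf{H}\cup\Sigma_{t_i+t}$, to an open region supporting a $\zeta$-ball. This requires quantitative control of the thickness of $\mathbf{H}$ around the discrete singular set and confirming that the $m$-sheet topology persists up to $\partial\mathbf{H}$.
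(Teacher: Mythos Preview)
Your approach coincides with the paper's: both locate a slab between adjacent sheets near a regular point of $\Sigma_\infty$ and argue that it sits in $\mathbf{TN}$. The paper's own proof is in fact terser than yours on precisely the topological point you flag, essentially asserting that the graphical sheets outside $\mathbf{H}$ lie in the $\zeta/2$-tube of $\Sigma_\infty$ and concluding directly, with the details deferred to \cite{LW1}.

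Regarding your stated obstacle: the worry that excising $\mathbf{H}$ might open a new channel can be put to rest structurally. The curves in the definition of $\mathbf{TK}$ are required to lie in $B_R(0)\setminus(\mathbf{H}\cup\Sigma_t)$, so passing from $B_R(0)\setminus\Sigma_t$ to $B_R(0)\setminus(\mathbf{H}\cup\Sigma_t)$ only \emph{removes} points from the arena; every connected component of the latter is contained in a component of the former, and no merging occurs. Thus $V^{i,t}$ sits inside a single component of $B_R(0)\setminus\Sigma_t$. The remaining point is that, on $B_R(0)\setminus\mathbf{H}$, all $m$ sheets are graphs within $\zeta/2$ of $\mathrm{reg}\,\Sigma_\infty$ (whose complement in $\Sigma_\infty$ has codimension $\geq 2$, hence does not disconnect), so consecutive sheets continue to separate locally and the between-sheets component stays in the $\zeta/2$-tube of $\Sigma_\infty$, which admits no $\zeta$-ball. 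This closes the gap you identified; the quantitative control of $\mathbf{H}$ you mention is not needed beyond the fact that curves cannot enter it.
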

\begin{proof}
Since $M_t$ is embedded and $\{\Sigma_{t_i+t},-\tau\leq t\leq\tau\}$ converges locally smoothly to the limit stable cone $\Sigma_\infty$, all components of $(\Sigma_t\cap B_R(0))\setminus \mathbf{H}(\Sigma_t,\varepsilon,\zeta,R)$ with $t\in [t_i-\tau,t_i]$ lie in the $B_{\zeta/2}$ neighborhood of $\Sigma_\infty$. By the definition of $\mathbf{TN}$, for any $t\in [t_i-\tau,t_i]$ the
quantity $\mathbf{TN}(\Sigma_{t},\varepsilon,\zeta,R)$ is nonempty and we have $|\mathbf{TN}(\Sigma_{t},\varepsilon,\zeta,R)|>0$.
\end{proof}

Since the volume of the thin part tends to zero, we can choose a sequence time $\{t_i\}$ such that (cf. \cite[Lemma 4.8]{LW19}).
\begin{lemma}\label{lm5.12}
	Let $R,\varepsilon,\tau>0$, $\zeta\in (0,\zeta_0)$ and $f(t,\zeta)=\inf\limits_{s\in [t-\tau,t]}|\mathrm{TN}(\Sigma_s,\varepsilon,\zeta,R)|$. For any $t_0>0$ and $l>0$,we can find a sequence $\{t_i\}$ with $t_{i+1}>t_i+l$ such that for any $i\in \IN$,
	\begin{equation}
		\begin{split}
			\sup\limits_{t\in [t_i,t_i+l]}f(t,\zeta)\leq 2f(t_i,\zeta).
		\end{split}
	\end{equation}
\end{lemma}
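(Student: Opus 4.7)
The plan is to construct the sequence $\{t_i\}$ one index at a time by a selection procedure in the spirit of \cite[Lemma 4.8]{LW1}. The underlying observation is that $f(\cdot,\zeta)$ is uniformly bounded above: since $\mathbf{TN}(\Sigma_s,\varepsilon,\zeta,R)\subset B_R(0)$ one has $f(t,\zeta)\le \omega_{n+1} R^{n+1}$ for every $t$. The guiding idea is that each failure of the doubling inequality forces a forward jump of length at most $l$ to a time at which $f$ has at least doubled, and this geometric growth cannot persist.

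To produce $t_1$, I would start from $s_0=t_0$ and define $s_{k+1}$ inductively as follows. If $\sup_{t\in[s_k,s_k+l]} f(t,\zeta)\le 2 f(s_k,\zeta)$, stop and set $t_1:=s_k$; otherwise choose $s_{k+1}\in[s_k,s_k+l]$ with $f(s_{k+1},\zeta)>2 f(s_k,\zeta)$. Were the procedure non-terminating, one could pick the smallest $k_0$ with $f(s_{k_0},\zeta)>0$ (if no such $k_0$ exists then every sup in the procedure equals zero and it stops at $k=0$), and then by induction $f(s_{k_0+j},\zeta)>2^{j}f(s_{k_0},\zeta)$ for every $j\ge 0$, contradicting the uniform upper bound on $f$. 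Hence the procedure terminates after finitely many steps and produces $t_1$.

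Given $t_i$, I would rerun the identical procedure with initial time $s_0$ chosen to be any point strictly greater than $t_i+l$, yielding $t_{i+1}>t_i+l$ which satisfies the doubling property. Iterating this gives the whole sequence $\{t_i\}_{i\in\IN}$. The only real obstacle is the termination of the inner iteration, and this is handled by the uniform boundedness of $f$ together with the built-in geometric growth of the selection rule; everything else is bookkeeping that uses nothing beyond the definition of $f$ and the trivial volume bound $|\mathbf{TN}|\le |B_R(0)|$.
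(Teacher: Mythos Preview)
Your argument is correct and follows essentially the same doubling-selection scheme as \cite[Lemma 4.8]{LW1}, which is what the paper invokes. The one minor point of divergence is that the paper's reference cites Lemma~\ref{lm5.10} (so that $f(t,\zeta)\to 0$ as $t\to\infty$) as the obstruction to indefinite geometric doubling, whereas you use only the trivial volume bound $f\le |B_R(0)|$; either suffices to terminate the inner iteration, and your version is if anything slightly cleaner since it avoids appealing to the convergence of the flow.
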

\subsection{Construction of auxiliary functions}$\\$

We are ready to construct a positive immortal solution to $\partial_t u-Lu=0$. Then we use it to show the $L$-stability of
the limit cone. We fix $R,T>1$. For any sequence $t_i\to +\infty$, by Lemma \ref{lm5.2} a subsequence of $\{\Sigma_{i,t},-T<t<T\}$ converges in smooth topology to a cone $\Sigma_\infty$ away from $\mathrm{sing}\Sigma_\infty\cup\CS$. We assume that the multiplicity of the convergence is a constant $N_0\geq 2$. As in \cite{LW19}, we
construct some functions as follows:

Let  $\varepsilon>0$ and large $R>0$. We define
\begin{equation}
	\begin{split}
		\Omega_{\varepsilon,R}=\{x\in \mathrm{spt}\Sigma_\infty:|r_{\Sigma_\infty}|(x)\geq \varepsilon^{-1}\}\cap (B_R(0)\setminus B_\varepsilon(0)).
	\end{split}
\end{equation}
\begin{lemma}
There exists $t^*>0$ depending  only on $\varepsilon$ and $\CS_0$ such that 
\begin{equation}
	\begin{split}
			\Omega_{\varepsilon,R}\cap \CS_t=\emptyset,\quad \forall\; t>t^*.
	\end{split}
\end{equation}
\end{lemma}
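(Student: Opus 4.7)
The plan is to read off the time-$t$ slice of the singular set from Lemma~\ref{lm5.2} and use the rescaling factor $e^{t/2}$ to push every nonzero element of $\CS_0$ out of the ball $B_R(0)$, while the removal of $B_\varepsilon(0)$ from $\Omega_{\varepsilon,R}$ handles the single fixed point $0$.

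First, by Lemma~\ref{lm5.2}, the space-time singular set of the smooth convergence is $\{(x,t):t\in(-T,T),\ x\in e^{t/2}\CS_0\}$, with $\CS_0\subset\mathrm{reg}(\Sigma_\infty)$ a finite set. Hence its slice at time $t$ is exactly $\CS_t=e^{t/2}\CS_0$.

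Next I split $\CS_0$ according to whether $q=0$ or $q\neq 0$. Put
\[
d_0:=\min\{|q|:q\in\CS_0,\ q\neq 0\}>0,
\]
with the convention $d_0:=+\infty$ if $\CS_0\subset\{0\}$, and choose $t_R:=2\log(R/d_0)$. For any $q\in\CS_0\setminus\{0\}$ and $t>t_R$ one has $|e^{t/2}q|=e^{t/2}|q|>R$, so $e^{t/2}q\notin B_R(0)\supset\Omega_{\varepsilon,R}$. If instead $0\in\CS_0$, its rescaled image is still $0$, and $\Omega_{\varepsilon,R}\subset B_R(0)\setminus B_\varepsilon(0)$ excludes a neighborhood of $0$, so $0\notin\Omega_{\varepsilon,R}$. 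Combining the two cases yields $\CS_t\cap\Omega_{\varepsilon,R}=\emptyset$ for all $t>t_R$.

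The threshold $t_R$ depends only on $R$ and on the finite set $\CS_0$; the parameter $\varepsilon$ enters only through the removal of $B_\varepsilon(0)$, which is precisely what is needed to discard the potential fixed contribution of $0\in\CS_0$. This lemma is essentially a bookkeeping consequence of Lemma~\ref{lm5.2}, and I do not anticipate any substantive obstacle.
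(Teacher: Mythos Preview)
Your proof is correct and follows essentially the same approach as the paper: both use $\CS_t=e^{t/2}\CS_0$, set $t_R=2\log(R/\min\{|q|:q\in\CS_0\setminus\{0\}\})$ to push the nonzero points of $\CS_0$ outside $B_R(0)$, and rely on $\Omega_{\varepsilon,R}\subset B_R(0)\setminus B_\varepsilon(0)$ to exclude the origin. You are in fact slightly more explicit than the paper in treating the edge cases $0\in\CS_0$ and $\CS_0\subset\{0\}$.
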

\begin{proof}
Since $\CS_t=e^{t/2}\CS_0$,	all points in $\CS_t\setminus \{0\}$ will move further away from the point $\{0\}$ when $t$ is increasing. Set
	\begin{equation}
		\begin{split}
			\min\limits\CS_0:=\mathrm{min}\{|y||y\in \CS_0\setminus \{0\}\}>0, \quad t^*:=2\log \frac{R}{ \mathrm{min}\CS_0}.
		\end{split}
	\end{equation}
	Then for any $t\geq t^*$ and $y_0\in \CS_0\setminus \{0\}$ we have $e^{\frac{t}{2}}|y|\geq R$. This implies  that
	\begin{equation}
		\begin{split}
			(\CS_t\setminus \{0\})\cap B_{R}(0)=\emptyset,\quad \forall \;t>t^*.
		\end{split}
	\end{equation}
\end{proof}
	Let $u^+_i(x,t) $ and $u^-_i(x,t)$ be the graph functions representing the top and bottom sheet over $\Omega_{\varepsilon,R}$ (see figure \ref{fig4}).
    \begin{figure}[htbp]
    \centering
    \includegraphics[width=0.6\textwidth]{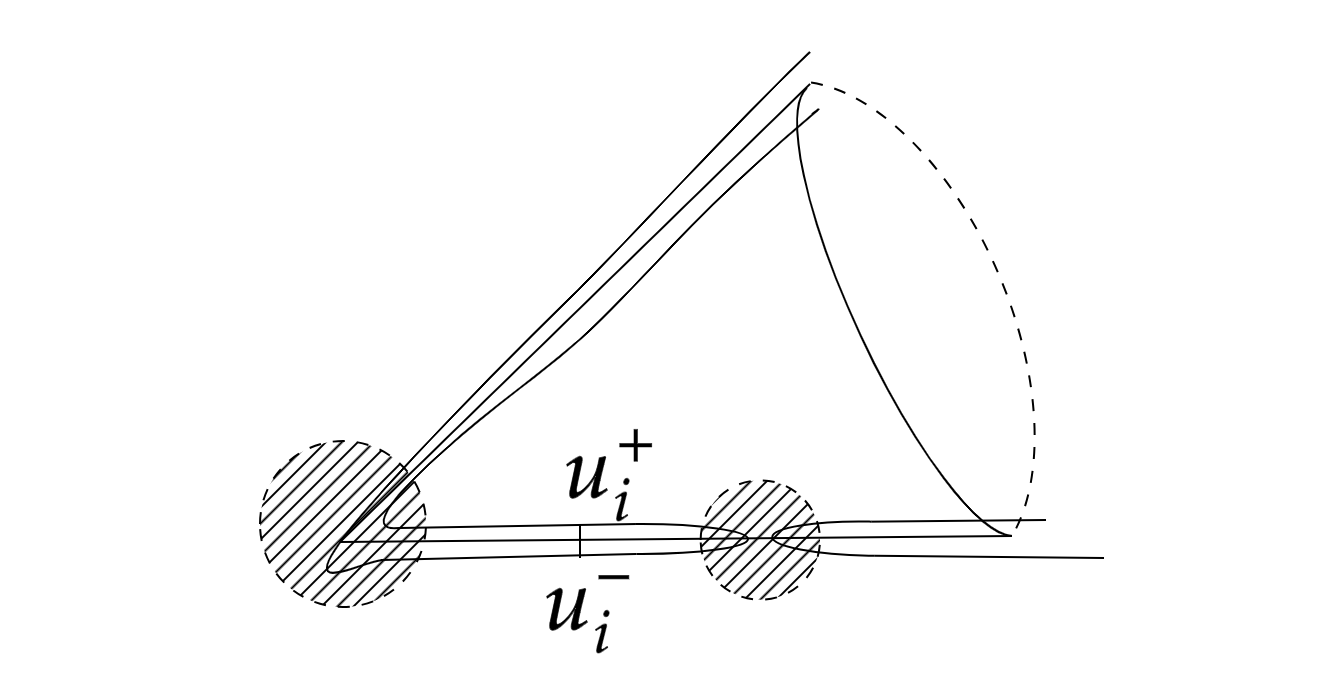}
    \caption{Height function.}
    \label{fig4}
\end{figure}
By the convergence property of the flow $\{(\Sigma_{i,t},\mathbf{x}_i),-T<t<T\}$, for any $\varepsilon>0$ and large $R$ there exists $i_0>0$ such that for any $i\geq i_0$ and $t\in (t_R,T)$ the function $u^+_i(x,t)$ and $u^-_i(x,t)$ are well-defined on $\Omega_{\varepsilon,R}$.  Similar to the Appendix of \cite{LW22}, we know that the height difference function $u_i(x,t)=u^+_i(x,t)-u^-_i(x,t)$ satisfies the following parabolic equations on $\Omega_{\varepsilon,R}\times I$
		\begin{equation}
	\frac{\partial u_i}{\partial t}=\Delta u_i-\frac{1}{2}\langle x,\nabla u_i\rangle+|A|^2u_i+\frac{1}{2}u_i+a_i^{pq}u_{i,pq}+b_i^pu_{i,p}+c_iu_i
	\end{equation}
	where $\Delta$ denotes the Laplacian operator on $\mathrm{reg}(\Sigma_\infty)$. The	coefficients $a_i^{pq},b_i^p,c_i$ are small and tend to zero as $u_i^+$ and $u_i^-$ tend to zero as $t_i\to +\infty$.

	Clearly, $u_i$ are positive solutions satisfying $u_i\to 0$. We fix a point $x_0\neq \CS_t\cup \{0\}$ and define the normalized height difference function
	\begin{equation}
		w_i(x,t):=\frac{u_i(x,t)}{u_i(x_0,1)}.
	\end{equation}
	Then $w_i(x, t)$ is a positive function on $\Omega_{\varepsilon,R}$ with $w_i(x_0,1)=1$ and
	\begin{equation}
	\frac{\partial w_i}{\partial t}=\Delta w_i-\frac{1}{2}\langle x,\nabla w_i\rangle+|A|^2w_i+\frac{1}{2}w_i+a_i^{pq}w_{i,pq}+b_i^pw_{i,p}+c_iw_i.
	\end{equation}
    
Away from the high curvature neighborhood $\mathbf{H}$, the flow smoothly converges 
to the limit cone with multiplicity. One can show that, for large $t$, the integral 
of the height function is comparable to the volume $|\mathbf{TN}|$ (cf.~\ref{fig4}). 
Since $\Sigma_{i,t}=\Sigma_{t_i+t}$, the same estimate holds for $\Sigma_{i,t}$ 
when $t_i$ is sufficiently large.

	\begin{lemma}\cite[Lemma 3.14]{LW22}\label{lm5.11}
	Fix $\varepsilon,R$ and $T$ as above. For any sequence $\{t_i\}$ chosen in Lemma \ref{lm5.2}, there exists $t_T>0$ such that  for any $t\in (-T,T)$ and $t_i>t_T$ we have
	\begin{equation}\label{eq5.22}
		\frac{1}{2}\int_{\Omega_{\varepsilon,R}}u_i(x,t)d\mu_\infty\leq |\mathbf{TN}(\Sigma_{i,t})|\leq 2\int_{\Omega_{\frac{1}{5}\varepsilon,R}}u_i(x,t)d\mu_\infty,
	\end{equation}
	where $d\mu_\infty$ denotes the volume form of $\Sigma_\infty$.
	\end{lemma}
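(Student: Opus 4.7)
The plan is to identify the thin part $\mathbf{TN}(\Sigma_{i,t})$, up to an arbitrarily small relative error, with the open region between the bottom sheet $u_i^-$ and the top sheet $u_i^+$ of $\Sigma_{i,t}$ in a normal tubular neighborhood of $\Omega_{\varepsilon,R}$. A Fubini computation in normal coordinates then relates $|\mathbf{TN}(\Sigma_{i,t})|$ to $\int u_i\, d\mu_\infty$, the factors $\tfrac12$ and $2$ absorbing (i) the Jacobian of the normal tubular map, which is $1+O(\varepsilon)$ because the second fundamental form of $\Sigma_\infty$ on $\Omega_{\varepsilon,R}$ is controlled, and (ii) the mismatch coming from the $C^1$ tilting of the sheets, which tends to $0$ as $t_i\to\infty$.

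First, using the smooth multiplicity-$N_0$ convergence from Lemma \ref{lm5.2} and the fact that $\CS_t=e^{t/2}\CS_0$ eventually leaves $B_R(0)$, I fix $t_T$ so large that for every $t_i\geq t_T$ and $t\in(-T,T)$, on a neighborhood of $\Omega_{\varepsilon/5,R}$ the surface $\Sigma_{i,t}$ consists of $N_0$ disjoint $C^2$ graphs $u_i^1<u_i^2<\dots<u_i^{N_0}$ along the normal of $\Sigma_\infty$ with $\|u_i^k\|_{C^1}\to 0$ and consecutive gaps much smaller than $\zeta/4$; in particular $\mathbf{H}(\Sigma_{i,t},\varepsilon,R)$ stays outside a definite tubular neighborhood of $\Omega_{\varepsilon,R}$. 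For the lower bound I take any $x\in\Omega_{\varepsilon,R}$ and any point $z$ on the normal segment from $x+u_i^-(x,t)\mathbf{n}(x)$ to $x+u_i^+(x,t)\mathbf{n}(x)$ that avoids the $N_0$ sheets, and verify $z\in\mathbf{TN}(\Sigma_{i,t})$: the choice of $\Omega_{\varepsilon,R}$ keeps $z$ away from $\mathbf{H}$, and the smallness of the gaps forces every $\zeta$-ball about $z$ to meet some sheet, so $z\notin\mathbf{TK}$. Integrating the segment lengths over $\Omega_{\varepsilon,R}$ and applying the Jacobian bound yields $|\mathbf{TN}(\Sigma_{i,t})|\geq \tfrac12\int_{\Omega_{\varepsilon,R}}u_i\, d\mu_\infty$ once $t_i$ is large enough.

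For the upper bound, conversely, let $z\in\mathbf{TN}(\Sigma_{i,t})$. Since $z\notin\mathbf{TK}$, every $\zeta$-ball about $z$ meets $\Sigma_{i,t}$, and since $z\notin\mathbf{H}$, the nearest point of $\Sigma_{i,t}$ to $z$ has a definite lower bound on its regularity scale. Smooth convergence then guarantees a well-defined normal projection $\pi(z)\in\mathrm{reg}(\Sigma_\infty)$, and the $C^2$ comparison of regularity scales between $\Sigma_{i,t}$ near its nearest sheet and $\Sigma_\infty$ near $\pi(z)$ places $\pi(z)\in\Omega_{\varepsilon/5,R}$; the normal coordinate of $z$ over $\pi(z)$ must also lie between $u_i^-(\pi(z),t)$ and $u_i^+(\pi(z),t)$, since otherwise $z$ could be connected to a $\zeta$-ball disjoint from $\Sigma_{i,t}$ and would then lie in $\mathbf{TK}$. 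Applying the same Jacobian bound in the opposite direction gives $|\mathbf{TN}(\Sigma_{i,t})|\leq 2\int_{\Omega_{\varepsilon/5,R}}u_i\, d\mu_\infty$.

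The main obstacle is precisely this upper-bound step: one needs a quantitative relation between the regularity scale of $\Sigma_{i,t}$ at a nearest point to $z$ and that of $\Sigma_\infty$ at the projection $\pi(z)$, since the definition of $\mathbf{H}$ involves the former while membership in $\Omega_{\varepsilon/5,R}$ involves the latter. The buffer factor $1/5$ is exactly what makes this comparison robust against the $C^1$ tilting of sheets and finite-$i$ convergence errors, and one must be careful near points of $\mathbf{TN}$ sitting close to the edge of the regular set of $\Sigma_\infty$, where the projection $\pi$ could otherwise fail to land in any uniformly graphical region.
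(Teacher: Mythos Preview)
The paper does not supply its own proof of this lemma; it is stated with a citation to \cite[Lemma 3.14]{LW2} and no argument is given in the text. Your outline follows what is the natural (and, as far as one can tell, the cited) strategy: identify $\mathbf{TN}(\Sigma_{i,t})$ with the normal slab between the extreme graphical sheets over the good region of $\Sigma_\infty$, then apply Fubini in tubular coordinates with a Jacobian that is $1+o(1)$ because $|A_{\Sigma_\infty}|\le \varepsilon^{-1}$ on $\Omega_{\varepsilon,R}$ and the sheets are $C^1$-close to $\Sigma_\infty$. The lower bound and the role of the buffer $\varepsilon\mapsto \varepsilon/5$ in the regularity-scale comparison are both correctly identified.

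One small imprecision worth fixing: from $z\notin\mathbf{TK}$ you deduce that every $\zeta$-ball about $z$ meets $\Sigma_{i,t}$, but the definition of $\mathbf{TK}$ only says that no point reachable from $z$ within $B_R(0)\setminus(\mathbf{H}\cup\Sigma_{i,t})$ has a $\zeta$-ball contained in $B_R(0)\setminus(\mathbf{H}\cup\Sigma_{i,t})$; the obstruction could in principle be $\mathbf{H}$ or $\partial B_R(0)$ rather than $\Sigma_{i,t}$. This is harmless in the end, since $z\notin\mathbf{H}$ already keeps $z$ at distance $\ge \varepsilon/2$ from the low-regularity set $\mathbf{S}$, and smooth convergence on that region provides the nearest-point projection to $\Sigma_\infty$ you need; but the sentence should be adjusted accordingly.
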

Combining Lemma \ref{lm5.12} and Lemma \ref{lm5.11}, we obtain an almost monotonicity estimate for
$\|w_i\|_{L^1}$.
Since $w_i$ satisfies a parabolic equation, the parabolic Harnack inequality
provides a $C^0$ upper bound for $w_i$ in terms of its $L^1$ norm.
The latter is uniformly bounded by the initial data.

Li and Wang \cite{LW22} established a Harnack inequality for smooth shrinkers. In our case, although the shrinker may have singularities, we only apply the Harnack inequality on the regular part, where the curvature remains bounded. Consequently, the argument follows exactly the same lines as their proof, with no essential differences. Therefore, we obtain the following estimates.
\begin{lemma}~\cite[Lemma 3.17]{LW22}\label{lm-4.14}
	For any compact set  $K\subset\subset \Omega_{\varepsilon,R}$, $T>t^*+2$, there exist two constants 
	\begin{equation}
		\begin{split}
			C_1=C_1(\varepsilon,K,\CS_0,x_0)>0,\quad C_2=C_2(\varepsilon,\CS,T,x_0)>0
		\end{split}
	\end{equation}
	such that for large  $t_i$, we have
	\begin{equation}
		\begin{split}
			C_2(\varepsilon,\CS,T,x_0)<w_i(x,t)<C_1(\varepsilon,K,\CS_0,x_0),\forall (x,t)\in K\times [t^*+1,T-1].
		\end{split}
	\end{equation}
	Moreover, at time $t=t^*+1$ there exists $C_3=C_3(\varepsilon,\CS_0,x_0)>0$ independent of $T$ such that 
	\begin{equation}
		\begin{split}
			w_i(x,t_\varepsilon+1)\geq C_3(\varepsilon,\CS_0,x_0)>0\quad \forall x\in K.
		\end{split}
	\end{equation}
\end{lemma}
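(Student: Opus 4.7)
The function $w_i$ is a positive classical solution on $\Omega_{\varepsilon,R}\times(-T,T)$ of the displayed linear parabolic equation, normalized by $w_i(x_0,1)=1$. By the very definition of $\Omega_{\varepsilon,R}$ the regularity scale is bounded below by $\varepsilon^{-1}$, so $|A|^2$ is uniformly bounded and the induced graph metric is uniformly comparable to the Euclidean one; hence the principal part is uniformly elliptic with uniformly bounded coefficients, while the perturbations $a_i^{pq},b_i^p,c_i$ tend to zero as $i\to\infty$. Moreover, because $\CS_t=e^{t/2}\CS_0$ moves away from the origin, for $t\geq t_\varepsilon$ the set $\Omega_{\varepsilon,R}\setminus(\CS_t\cup\mathrm{sing}\Sigma_\infty)$ is open and path-connected, so the parabolic Harnack inequality (Moser or Krylov--Safonov) applies on parabolic cylinders of uniform size lying inside this region.

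The lower bound is obtained by chaining the parabolic Harnack inequality forward in time along a finite path of cylinders of uniform size joining $(x_0,1)$ to any $(x,t)\in K\times[t_\varepsilon+1,T-1]$. Each Harnack link contributes a multiplicative constant, and the product yields $w_i(x,t)\geq C_2\cdot w_i(x_0,1)=C_2>0$. Since the number of links needed to span the whole time window grows with $T-1-(t_\varepsilon+1)$ and the chain must be routed around the (time-dependent) set $\CS$, the constant $C_2$ degrades in $T$ and depends on $\CS$, giving $C_2=C_2(\varepsilon,\CS,T,x_0)$. The auxiliary inequality $w_i(x,t_\varepsilon+1)\geq C_3(\varepsilon,\CS_0,x_0)$ is the specialization of this argument to the single slice $t=t_\varepsilon+1$, where the chain length is determined only by $K$, $\CS_0$, $\varepsilon$, $x_0$ and not by $T$.

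For the upper bound I would apply parabolic Harnack in the form
\begin{equation*}
\sup_{B_r(y)\times[s-3r^2,s-2r^2]}w_i\ \leq\ C\inf_{B_r(y)\times[s-r^2,s]}w_i,
\end{equation*}
whose constant $C$ is determined only by the fixed size $r$ and the uniform ellipticity, not by where the cylinder sits in time. Using a sliding window of such cylinders at each time slice, together with the integral bound
\begin{equation*}
\int_{\Omega_{\varepsilon,R}}u_i(\cdot,t)\,d\mu_\infty\ \leq\ 2|\mathbf{TN}(\Sigma_{i,t})|
\end{equation*}
from Lemma \ref{lm5.11} and the smallness of $|\mathbf{TN}|$ along the distinguished sequence $\{t_i\}$ produced by Lemma \ref{lm5.12}, one can bound $\sup_{K}w_i(\cdot,t)$ in terms of $\inf_K w_i(\cdot,t)$ (local Harnack on a fixed cylinder) and control the latter via the normalization $w_i(x_0,1)=1$. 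This yields $w_i(x,t)\leq C_1(\varepsilon,K,\CS_0,x_0)$ with a constant not degrading in $T$.

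\textbf{Main obstacle.} The delicate point is the $T$-independence of $C_1$: any naive forward propagation through the PDE produces a bound that grows exponentially with the length of the time interval. Achieving the stated $T$-independent upper bound requires exploiting the fact that parabolic Harnack on a fixed-size cylinder provides a sup-to-inf ratio whose constant does not grow with $T$, and then combining this local ratio control with the integral control of $u_i$ coming from Lemma \ref{lm5.11} and the careful selection of times from Lemma \ref{lm5.12}. Once this is set up, both bounds follow by a rather standard Harnack chaining argument.
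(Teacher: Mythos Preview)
Your approach is essentially the one the paper indicates (and that \cite{LW2} carries out): parabolic Harnack for the lower bound $C_2$ and the fixed-slice bound $C_3$, and Harnack combined with Lemma~\ref{lm5.11} and Lemma~\ref{lm5.12} for the $T$-independent upper bound $C_1$. The ingredients you list are exactly right.

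One point in your write-up is misleading and worth correcting. You say you will ``bound $\sup_K w_i(\cdot,t)$ in terms of $\inf_K w_i(\cdot,t)$ and control the latter via the normalization $w_i(x_0,1)=1$.'' The normalization at time $1$ cannot give a $T$-independent upper bound on $\inf_K w_i(\cdot,t)$ for $t$ far from $1$; Harnack only propagates \emph{lower} bounds forward in time. The actual mechanism is that \emph{both} numerator and denominator of $w_i=u_i/u_i(x_0,1)$ are separately compared to $|\mathbf{TN}|$: a local sup bound together with the first inequality in Lemma~\ref{lm5.11} gives $u_i(x,t)\lesssim |\mathbf{TN}(\Sigma_{i,t'})|$ for some $t'$ near $t$, while Harnack on a fixed compact exhausting $\Omega_{\varepsilon/5,R}$ together with the second inequality in Lemma~\ref{lm5.11} gives $u_i(x_0,1)\gtrsim |\mathbf{TN}(\Sigma_{i,s'})|$ for some $s'$ near $1$. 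What Lemma~\ref{lm5.12} provides is not ``smallness'' of $|\mathbf{TN}|$ but rather the comparability $\sup_{[t_i,t_i+l]}f\leq 2f(t_i)$, which bounds the ratio of these two $|\mathbf{TN}|$ values uniformly in $T$ along the distinguished sequence $\{t_i\}$. With this clarification your outline is correct.
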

Passing to the limit as $i\to\infty$, we note that the coefficients $a_i^{pq}$, $b_i^p$, and $c_i$ converge to zero as $u_i^+$ and $u_i^-$ tend to zero. Therefore, the sequence $w_i$ converges to a positive immortal solution $w$ satisfying $\partial_t w-Lw=0.$ Moreover, the estimates in Lemma~\ref{lm-4.14} pass to the limit (cf.~\cite[Proposition 4.9]{LW19}).
	\begin{proposition}\label{prop5.13}
	Under the assumption of Theorem \ref{mul-one}, if the multiplicity of
	a convergent sequence is at least two, we can
	find a smooth function $w$ defined on $\Omega_{\varepsilon,R}\times [0,\infty)$ such that
	\begin{equation}\label{eq-5.13}
		\frac{\partial w}{\partial t}=Lw:=\Delta w-\frac{1}{2}\langle x,\nabla w\rangle+|A|^2w+\frac{1}{2}w,
	\end{equation}
	where $\Delta$ is the Laplacian operator on $\mathrm{reg}\Sigma$. Furthermore,
for any compact set  $K\subset\subset \Omega_{\varepsilon,R}$	there is a constant $C=C(K,\varepsilon,R)>0$ independent of $t$ such that
	\begin{equation}\label{eq5-3-27}
		\begin{split}
			0<&w(x,t)<C,\quad \forall (x,t)\in K\times [0,\infty),\\
			&w(x,0)\geq \frac{1}{C},\quad x\in K.
		\end{split}
	\end{equation}
   \end{proposition}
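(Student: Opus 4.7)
The plan is to construct $w$ as a subsequential limit of the normalized height-difference functions $w_i(x,t) = u_i(x,t)/u_i(x_0,1)$ introduced above. These solve
\begin{equation*}
\frac{\partial w_i}{\partial t} = \Delta w_i - \tfrac{1}{2}\langle x,\nabla w_i\rangle + |A|^2 w_i + \tfrac{1}{2}w_i + a_i^{pq}w_{i,pq} + b_i^p w_{i,p} + c_i w_i,
\end{equation*}
with coefficients $a_i^{pq}, b_i^p, c_i \to 0$ as $i\to\infty$, so that any smooth limit automatically satisfies the unperturbed equation $\partial_t w = Lw$. The substance of the proof is therefore to extract such a limit and to verify the bounds in \eqref{eq5-3-27} on all of $[0,\infty)$.

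First I would pin down the time sequence. Fix $\varepsilon, R > 0$ and $\zeta \in (0,\zeta_0)$, and for each integer $l$ apply Lemma \ref{lm5.12} to obtain $t_i^{(l)} \to \infty$ with $\sup_{t \in [t_i^{(l)},\, t_i^{(l)}+l]} f(t,\zeta) \leq 2 f(t_i^{(l)},\zeta)$. Combined with the comparison $f(t) \sim \int_{\Omega_{\varepsilon,R}} u_i(\cdot,t)\, d\mu_\infty$ coming from Lemma \ref{lm5.11}, this controls the growth of $\|u_i(\cdot,t)\|_{L^1(\Omega_{\varepsilon,R})}$ on an interval of length $l$ once we recenter so that $t=0$ corresponds to $t_i^{(l)}$. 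Combined with the Harnack-type estimates from the previous lemma, this gives uniform two-sided bounds $C_2^{(l)} < w_i(x,t) < C_1^{(l)}$ on every compact $K \times [t_\varepsilon+1, l-1]$, together with the $l$-independent initial estimate $w_i(x, t_\varepsilon+1) \geq C_3(\varepsilon,\mathcal{S}_0,x_0)$ for $x \in K$.

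Next I would pass to the limit. Since the $w_i$ are positive solutions of a uniformly parabolic equation with coefficients that converge smoothly to those of $L$ on $\mathrm{reg}(\Sigma_\infty)$, standard interior parabolic Schauder theory yields $C^{k,\alpha}_{loc}$ bounds on every compact subset of $\Omega_{\varepsilon,R} \times (t_\varepsilon+1, l-1)$. A diagonal extraction across an exhaustion of $\Omega_{\varepsilon,R}$ by compacts and across $l \to \infty$ then produces a smooth positive solution $\tilde w$ of $\partial_t \tilde w = L \tilde w$ on $\Omega_{\varepsilon,R} \times [t_\varepsilon+1, \infty)$ satisfying, on each compact $K$, the bound $0 < \tilde w(x,t) < C(K,\varepsilon,R)$ uniformly in $t$, and the initial bound $\tilde w(\cdot, t_\varepsilon+1) \geq C_3$. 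A time shift by $t_\varepsilon + 1$ then gives the desired $w$ on $\Omega_{\varepsilon,R} \times [0,\infty)$ satisfying \eqref{eq5-3-27}.

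The main obstacle is making the upper bound on $w$ independent of $t$, equivalently, surviving the $l \to \infty$ limit with a single constant. This is precisely where the multiplicity-$\geq 2$ assumption, together with the careful selection from Lemma \ref{lm5.12}, is essential: without such a choice of $t_i^{(l)}$ the normalization $u_i(x_0,1)$ could be arbitrarily small compared to later values of $u_i$, causing $w_i$ to blow up. The estimate $\sup_{[t_i^{(l)}, t_i^{(l)}+l]} f \leq 2 f(t_i^{(l)})$ is exactly what forces an $l$-independent $L^1$ bound on $u_i$, hence, via Harnack, an $l$-independent $L^\infty$ bound on compact sets, which is what survives the diagonal limit. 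The positivity of $w$ at $t=0$ similarly depends on the $l$-independence of $C_3$ guaranteed by the previous lemma.
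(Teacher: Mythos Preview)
Your proposal is correct and follows essentially the same route as the paper, which in turn follows \cite{LW1,LW2}: the paper does not give a self-contained proof of this proposition but assembles the ingredients in the preceding lemmas (Lemma~\ref{lm5.11}, Lemma~\ref{lm5.12}, and the Harnack-type bounds from \cite[Lemma 3.17]{LW2}) and then cites \cite[Proposition 4.9]{LW1} for the conclusion. Your outline---select times via Lemma~\ref{lm5.12}, compare $|\mathbf{TN}|$ with $\|u_i\|_{L^1}$ via Lemma~\ref{lm5.11}, apply parabolic Harnack and Schauder estimates, and run a diagonal argument in $l$---is exactly how that proof goes, and your identification of the $T$-independence of the upper bound (already recorded as the $T$-independence of $C_1$ in the preceding lemma) as the crux is accurate.
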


\subsection{Proof of the multiplicity-one convergence}$\\$

We next use the renormalized function $w$ to show that if the multiplicity is greater than one, then the limit shrinker is $L$-stable, which yields a contradiction.

\begin{lemma}\label{lm5.16}
	For any function $\varphi\in W^{1,2}_c(\mathrm{reg}(\Sigma_\infty)\setminus\{0\})$ we have 
	\begin{equation}\label{eq-5.37}
		-\int_{\Sigma_\infty}(\varphi L\varphi)e^{-\frac{|x|^2}{4}}\geq 0.
	\end{equation}
	Here, $W^{1,2}_c(\mathrm{reg}\Sigma_\infty\setminus\{0\})$ denotes the set of all functions with compact
	support in $\mathrm{reg}\Sigma_\infty\setminus\{0\}$ and $L$ is the operator defined by \eqref{eq5.12} on $\mathrm{reg}\Sigma_\infty$.
\end{lemma}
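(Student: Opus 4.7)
The plan is to derive the stability inequality from the positive ancient solution $w$ furnished by Proposition~\ref{prop5.13}, via the classical logarithmic substitution (Jacobi-field trick). Given $\varphi\in W^{1,2}_c(\mathrm{reg}(\Sigma_\infty)\setminus\{0\})$, first choose $\varepsilon>0$ small and $R>0$ large so that $\mathrm{supp}(\varphi)\subset K$ for some compact $K\subset\subset\Omega_{\varepsilon,R}$, and (by density) reduce to the case $\varphi\in C^1_c$. Let $w$ be the function on $\Omega_{\varepsilon,R}\times[0,\infty)$ from Proposition~\ref{prop5.13}: it satisfies $\partial_t w=Lw$ together with the two-sided bounds $0<w<C$ on $K\times[0,\infty)$ and $w(\cdot,0)\geq 1/C$ on $K$, for some $C=C(K,\varepsilon,R)>0$.

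Set $v:=\log w$ on $K\times[0,\infty)$. Using $\Delta w/w=\Delta v+|\nabla v|^2$ and $\langle x,\nabla w\rangle/w=\langle x,\nabla v\rangle$, the linear equation for $w$ transforms into
\begin{equation*}
\partial_t v=\Delta v-\tfrac{1}{2}\langle x,\nabla v\rangle+|\nabla v|^2+|A|^2+\tfrac{1}{2}.
\end{equation*}
Multiply by $\varphi^2 e^{-|x|^2/4}$, integrate over $\Sigma_\infty$, and integrate the $\Delta-\tfrac{1}{2}\langle x,\nabla\cdot\rangle$ part by parts against the Gaussian weight; one obtains
\begin{equation*}
\int \varphi^2\partial_t v\,e^{-|x|^2/4}=\int\varphi^2\!\left(|A|^2+\tfrac{1}{2}\right)\!e^{-|x|^2/4}-2\int\varphi\langle\nabla\varphi,\nabla v\rangle\,e^{-|x|^2/4}+\int\varphi^2|\nabla v|^2\,e^{-|x|^2/4}.
\end{equation*}
The Cauchy--Schwarz bound $2|\varphi\langle\nabla\varphi,\nabla v\rangle|\leq|\nabla\varphi|^2+\varphi^2|\nabla v|^2$ absorbs the $|\nabla v|^2$ term and yields
\begin{equation*}
\int \varphi^2\partial_t v\,e^{-|x|^2/4}\geq \int\varphi^2\!\left(|A|^2+\tfrac{1}{2}\right)\!e^{-|x|^2/4}-\int|\nabla\varphi|^2\,e^{-|x|^2/4}=\int\varphi L\varphi\,e^{-|x|^2/4}.
\end{equation*}

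Integrating this inequality in $t$ over $[0,T]$ and using that $\varphi$ is time-independent, we get
\begin{equation*}
T\int\varphi L\varphi\,e^{-|x|^2/4}\leq \int\varphi^2\bigl[v(\cdot,T)-v(\cdot,0)\bigr]e^{-|x|^2/4}\leq 2\log C\int\varphi^2\,e^{-|x|^2/4},
\end{equation*}
where the last inequality uses $v(\cdot,T)\leq\log C$ and $v(\cdot,0)\geq -\log C$ on $K$. Dividing by $T$ and letting $T\to\infty$ gives $\int\varphi L\varphi\,e^{-|x|^2/4}\leq 0$, which is exactly \eqref{eq-5.37}.

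The substantive work has already been carried out upstream in constructing $w$ and establishing its uniform upper and lower bounds (through the parabolic Harnack argument and the non-triviality of $\mathbf{TN}$ coming from Lemmas~\ref{lm5.10}--\ref{lm5-9}). Granted these, the computation above is essentially formal; the only point that must be handled is to verify that $\mathrm{supp}(\varphi)$ genuinely sits inside some $\Omega_{\varepsilon,R}$, which is possible because $\varphi$ is compactly supported in $\mathrm{reg}(\Sigma_\infty)\setminus\{0\}$ where the regularity scale $r_{\Sigma_\infty}$ is strictly positive.
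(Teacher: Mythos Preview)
Your proposal is correct and follows essentially the same route as the paper: take $v=\log w$ for the positive solution $w$ of $\partial_t w=Lw$ from Proposition~\ref{prop5.13}, multiply the resulting equation by $\varphi^2 e^{-|x|^2/4}$, integrate by parts with respect to the Gaussian weight, apply Cauchy--Schwarz to absorb $|\nabla v|^2$, then average in $t$ over $[0,T]$ and use the two-sided bounds on $w$ to send $T\to\infty$. The paper organizes the integration by parts by starting from the divergence identity $\int\mathrm{div}(\varphi^2 e^{-|x|^2/4}\nabla v)=0$, but this is equivalent to your computation.
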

\begin{proof}
Given a function $\varphi \in W^{1,2}_c(\text{reg}(\Sigma_\infty)\setminus\{0\})$, there exist
$\varepsilon,R>0$ such that
\begin{equation}
\operatorname{supp}\varphi:=K \subset\subset \Omega_{\varepsilon,R}.
\end{equation}

For such $\varepsilon$ we choose $\{t_i\}$ as in Lemma \ref{lm5.12} and we denote by $\Sigma_\infty$ the limit cone of the sequence $\{\Sigma_{t_i}\}$. In light of Proposition \ref{prop5.13},  we obtain a positive function $w$ defined on $\Omega_{\varepsilon,R}\times [0,\infty)$. Let $v:=\log w$. It follows from \eqref{eq-5.13} that $v$ satisfies the equation
\begin{equation}
	\frac{\partial v}{\partial t}=\Delta v-\frac{1}{2}\langle x,\nabla v\rangle+|A|^2+\frac{1}{2}+|\nabla v|^2\quad \forall (x,t)\in \Omega_{\varepsilon,R}\times [0,\infty).
\end{equation}

Then integration by parts implies that
\begin{equation}
	\begin{split}
	0&=\int_{\Sigma_{\infty}}\text{div}\left(\varphi^2 e^{-\frac{|x|^2}{4}}\nabla v\right)\\
	&=\int_{\Sigma_{\infty}}\left(2\varphi\langle \nabla\varphi,\nabla v\rangle+\varphi^2(\Delta v-\frac{1}{2}\langle x,\nabla v\rangle)\right)e^{-\frac{|x|^2}{4}}\\
	&=\int_{\Sigma_{\infty}}\left(2\varphi\langle \nabla\varphi,\nabla v\rangle+\varphi^2(\frac{\partial v}{\partial t}-|A|^2-\frac{1}{2}-|\nabla v|^2)\right)e^{-\frac{|x|^2}{4}}\\
	&\leq \int_{\Sigma_{\infty}}\left(|\nabla\varphi|^2+\varphi^2\frac{\partial v}{\partial t}-(|A|^2+\frac{1}{2})\varphi^2\right)e^{-\frac{|x|^2}{4}}.
	\end{split}
\end{equation}
The above inequality can be rewritten as
\begin{equation}\label{eq-5.41}
\begin{split}
-\int_{\Sigma_{\infty}}\left(\varphi L\varphi\right)e^{-\frac{|x|^2}{4}}&= \int_{\Sigma_{\infty}}\left(|\nabla\varphi|^2-(|A|^2+\frac{1}{2})\varphi^2\right)e^{-\frac{|x|^2}{4}}\\
&\geq \frac{d }{d t}\int_{\Sigma_{\infty}}v\varphi^2e^{-\frac{|x|^2}{4}}.
\end{split}
\end{equation}
Fix $T>0$. Integrating both sides of \eqref{eq-5.41}, by \eqref{eq5-3-27}, we have
\begin{equation}
\begin{split}
-\int_{0}^Tdt\int_{\Sigma_{\infty}}\left(\varphi L\varphi\right)e^{-\frac{|x|^2}{4}}&\geq \int_{\Sigma_{\infty}}v\varphi^2e^{-\frac{|x|^2}{4}}\bigg|_{t=0}-\int_{\Sigma_{\infty}}v\varphi^2e^{-\frac{|x|^2}{4}}\bigg|_{t=T}\\
&= \int_{K}v\varphi^2e^{-\frac{|x|^2}{4}}\bigg|_{t=0}-\int_{K}v\varphi^2e^{-\frac{|x|^2}{4}}\bigg|_{t=T}\\
&\geq -C.
\end{split}
\end{equation}
where $C$ is independent of $T$. Thus, we have 
\begin{equation}\label{eq-5.43}
	-\int_{\Sigma_{\infty}}\left(\varphi L\varphi\right)e^{-\frac{|x|^2}{4}}\geq-\frac{C}{T}.
\end{equation}
Letting $T\to +\infty$ in \eqref{eq-5.43} we obtain \eqref{eq-5.37}. The proof of Lemma \ref{lm5.16} is complete.
\end{proof}
Since $\CH^{n-2}(\mathrm{sing}\Sigma_\infty)=0$, we have
	\begin{corollary}\label{coro5.4.15}
			For any function $\varphi\in W^{1,2}_c(\Sigma_\infty)$ we have 
		\begin{equation}
			-\int_{\Sigma_\infty}(\varphi L\varphi)e^{-\frac{|x|^2}{4}}\geq 0.
		\end{equation}
	Here $ W^{1,2}_c(\Sigma_\infty)$	denotes the set of all functions $\varphi\in W^{1,2}(\Sigma_\infty)$ with compact
		support in $\Sigma_\infty$, and $L$ is the operator defined by \eqref{eq5.12} on the cone $\Sigma_\infty$.
	\end{corollary}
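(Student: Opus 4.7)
The plan is to extend Lemma \ref{lm5.16} from test functions supported in $\mathrm{reg}(\Sigma_\infty) \setminus \{0\}$ to all $\varphi \in W^{1,2}_c(\Sigma_\infty)$ via a capacity cutoff argument. The key hypothesis is $\mathcal{H}^{n-2}(\mathrm{sing}\Sigma_\infty) = 0$; combined with $\mathcal{H}^{n-2}(\{0\}) = 0$ (which holds since $n \geq 3$), this gives $\mathcal{H}^{n-2}(\mathrm{sing}\Sigma_\infty \cup \{0\}) = 0$, so the ``bad set'' has vanishing $W^{1,2}$-capacity on $\Sigma_\infty$.

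Concretely, I would construct cutoff functions $\chi_\delta \in C^\infty(\Sigma_\infty)$ that vanish in a neighborhood of $\mathrm{sing}\Sigma_\infty \cup \{0\}$, satisfy $0 \leq \chi_\delta \leq 1$ and $\chi_\delta \to 1$ pointwise a.e. on $\mathrm{reg}\Sigma_\infty \setminus \{0\}$, and
\[
\int_{\Sigma_\infty} |\nabla \chi_\delta|^2 \, e^{-|x|^2/4} \, d\mu_{\Sigma_\infty} \longrightarrow 0 \quad \text{as } \delta \to 0.
\]
The construction is standard: cover the bad set by balls $B_{r_i}(x_i)$ with $\sum_i r_i^{n-2} < \delta$ and take piecewise linear (or logarithmic) cutoffs on each annulus $B_{2r_i}(x_i) \setminus B_{r_i}(x_i)$, so that $\int |\nabla \chi_\delta|^2 \lesssim \sum_i r_i^{n-2}$ via the volume bound on $\Sigma_\infty$.

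Given such cutoffs, for $\varphi \in W^{1,2}_c(\Sigma_\infty)$ the product $\varphi_\delta := \varphi \chi_\delta$ lies in $W^{1,2}_c(\mathrm{reg}\Sigma_\infty \setminus \{0\})$, so Lemma \ref{lm5.16} yields
\[
\int_{\Sigma_\infty} \Bigl( |\nabla \varphi_\delta|^2 - (|A|^2 + \tfrac{1}{2}) \varphi_\delta^2 \Bigr) e^{-|x|^2/4} \geq 0.
\]
Letting $\delta \to 0$, the zero-order term converges by monotone convergence, $\int \chi_\delta^2 |\nabla \varphi|^2 e^{-|x|^2/4}$ converges by dominated convergence to $\int |\nabla \varphi|^2 e^{-|x|^2/4}$, while the cross term $2\int \chi_\delta \varphi \langle \nabla \varphi, \nabla \chi_\delta\rangle e^{-|x|^2/4}$ and the tail $\int \varphi^2 |\nabla \chi_\delta|^2 e^{-|x|^2/4}$ should tend to zero via Cauchy--Schwarz and the capacity estimate above. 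Passing to the limit then gives the corollary.

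The principal obstacle is controlling these last two terms when $\varphi$ is merely $W^{1,2}$ rather than $L^\infty$. I would handle this by first establishing the inequality for bounded Lipschitz $\varphi$ of compact support, where $\int \varphi^2 |\nabla \chi_\delta|^2 \leq \|\varphi\|_\infty^2 \int |\nabla \chi_\delta|^2 \to 0$ trivially, and then extending to general $\varphi \in W^{1,2}_c(\Sigma_\infty)$ by truncation at height $M$ combined with density of bounded Lipschitz functions in $W^{1,2}_c$. Local integrability of $|A|^2 \varphi^2$, needed for the quadratic form to be well-defined in the first place, follows on the minimal cone $\Sigma_\infty$ from the scaling bound $|A|(x) \leq C/|x|$ near the vertex (which is locally $L^2$ for $n \geq 3$) together with the small dimension of $\mathrm{sing}\Sigma_\infty$ away from the vertex provided by Theorem \ref{thm3.1}.
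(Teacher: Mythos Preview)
Your proposal is correct and follows exactly the approach the paper intends: the paper gives no explicit proof of the corollary, only the one-line justification ``Since $\CH^{n-2}(\mathrm{sing}\Sigma_\infty)=0$, we have'' before the statement, which is shorthand for the standard capacity cutoff argument you spell out (and which the paper also alludes to earlier when citing page~751 of \cite{SS81}). Your treatment of the technical points---first reducing to bounded Lipschitz $\varphi$, then using the codimension-2 condition to build cutoffs with small Dirichlet energy---is the usual way to make this rigorous.
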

\begin{proof}[Proof of Theorem \ref{mul-one}]
     Suppose that there exists a senquence $\{t_i\}$ such that $\Sigma_{t_i}$ converges to $\Sigma_\infty\in \CC(N_0,n)$ with multiplicity greater than one. It follows from Corollary \ref{coro5.4.15} that $\Sigma_\infty$ is $F$-stable. However, it contradicts Lemma \ref{lm-b}. Thus, for the sequence of times $\{t_i\}$ in Lemma \ref{lm5.12}, the multiplicity of the convergence of $\{\Sigma_{t_i+t},-T<t<T\}$ is one and the convergence is smooth at $\mathrm{reg}(\Sigma_\infty)$.
\end{proof}

\subsection{Proof of Theorem \ref{thm0.1}}$\\$

For the convenience of readers, we copy down the statement of Theorem \ref{thm0.1} as follows.
\begin{theorem}
	For $n\ge 3$, let $\mathbf{x}:M^n\times [0,T)\to \mathbb{R}^{n+1})$ be a closed smooth embedded
	mean curvature flow in $\mathbb{R}^{n+1}$. Suppose that
	\begin{equation}
			\sup\limits_{M\times [0,T)}|H|(x,t)=\Lambda<\infty\quad \sup\limits_{t\in [0,T)} \mathrm{index}(M_t)=I<\infty,
	\end{equation}
		where $\mathrm{index}(M_t)$ is the number of negative eigenvalues of the operator $\Delta+|A|^2$ on $M_t$.
	Then there exist a smooth, possibly incomplete, hypersurface $M_T$ and a set $\mathcal{S}:=\overline{M_T}\setminus M_T$ satisfying the following properties.
    \begin{itemize}
        \item [(1)]$M_t$ smoothly converges to $M_T$ with multiplicity one. 
        \item [(2)] $\mathcal{S}$ has Minkowski dimension $\leq n-7$.
    In particular, for $3\leq n\leq 6$, $M_t$ does not blow up at time $T$. 

    \item [(3)]For any $p\in \mathcal{S}$ and any sequence of time $t_i\to T$, $\frac{1}{\sqrt{T-t_i}}(M_{t_i}-p)$(after taking a subsequence) converges to a stable  minimal cone with multiplicity one.
    \end{itemize}
\end{theorem}
\begin{proof}
By the compactness of varifold, there is a varifold $M_T$ such that $M_t\rightharpoonup M_T$ in the varifold sense as $t\to  T$. By Theorem \ref{thm3.1} and Proposition \ref{prop4.2}, $\CH^{\alpha}(\mathrm{sing}M_T)=0$ for any $\alpha>n-7$ and $M_t$ smoothly converges to $\mathrm{reg}M_T$ with multiplicity. We next show that the multiplicity is one. For any $x_0\in \mathrm{supp}M_T$, Corollary 3.6 of \cite{Hui90} implies that for all $t<T$ we have 
	\begin{equation}
		\begin{split}
			d(M_t,x_0)\leq 2\sqrt{T-t},
		\end{split}
	\end{equation}
	where $d(M_t,x_0)$ denotes the Euclidean distance from the point $x_0$ to the hypersurface $M_t$. We can rescale the flow $M_t$ by
	\begin{equation}
		\begin{split}
			s=-\log (T-t),\quad \tilde{M}_s=e^{\frac{s}{2}}\bigg(M_{T-e^{-s}}-x_0\bigg)
		\end{split}
	\end{equation}
	such that the flow $\{(\tilde{M}_s,\tilde{\mathbf{x}}(p,s)),-\log T\leq s<+\infty\}$ satisfies the following
	properties:
	\begin{itemize}
		\item [(1)]$\tilde{\mathbf{x}}(p,s)$ satisfies the equation
			\begin{equation}
			\bigg(\frac{\partial \tilde{\mathbf{x}}}{\partial t}\bigg)^\perp=-\bigg(H-\frac{1}{2}\langle \tilde{\mathbf{x}},\mathbf{n}\rangle \bigg)\mathbf{n};
		\end{equation}
		\item [(2)] the mean curvature of $\tilde{M}_s$ satisfies $\tilde{H}(p,s)\leq \Lambda e^{-\frac{s}{2}}$ for some $\Lambda>0$;
			\item [(3)] $\mathrm{index}(\tilde{M}_t)\leq I$;
		\item [(4)] $d(\tilde{M}_s,0)\leq 2$.
	\end{itemize}
	By Theorem \ref{mul-one}, for any sequence of times $s_i\to +\infty$  $\tilde{M}_{s_i}$ converges to a stable minimal cone with multiplicity one. In particular, if $x_0\in \mathrm{reg}M_T$, $\tilde{M}_{s_i}$ converges to a plane with multiplicity one. Hence, $\mathrm{sing}M_T$ has Minkowski dimension $\leq n-7$.
	
	For $3\leq n\leq 6$, since there is no stable cone other than a plane, $\tilde{M}_{s_i}$ converges smoothly to a plane passing through the origin with multiplicity one. Consider the heat-kernel-type function
	\begin{equation}
		\begin{split}
			\Phi_{(x_0,T)}(x,t)=\frac{1}{(4\pi (T-t))^{\frac{n}{2}}}e^{-\frac{|x-x_0|^2}{4(T-t)}},\quad \forall (x,t)\in M_t\times [0,T).
		\end{split}
	\end{equation}
	Huisken’s monotonicity formula(cf. Theorem 3.1 in \cite{Hui90}) implies that
	\begin{equation}
		\begin{split}
			\Theta(M_t,x_0,T)&:=\lim\limits_{t\to T}\int_{M_t}\Phi_{(x_0,T)}(x,t)d\mu_t\\
			&=\lim\limits_{s_i\to  +\infty}\frac{1}{(4\pi)^{\frac{n}{2}}}\int_{\tilde{M}_{s_i}}e^{-\frac{|x|^2}{4}}d\tilde{\mu}_{s_i}=1,
		\end{split}
	\end{equation}
	which implies that $(x_0,T)$ is a regular space-time point by Theorem 3.1 of White \cite{Whi05}. Thus, the unnormalized mean curvature flow $\{(M,\mathbf{x}(t)),0\leq t<T\}$ cannot develop a singularity at
	 $(x_0,T)$. The theorem is proved.
\end{proof}

\appendix

\section{A sheeting theorem}\label{secA}
In this section, we give a proof of Proposition \ref{thm3.1}.
\begin{proposition}
	If $\{M^n_k\}\subset \mathbb{R}^{n+1}$ is a sequence of complete, connected and embedded hypersurfaces with  $\CH^{n-2}(\operatorname{sing} M_k)=0$ and satisfying
	\begin{itemize}
		\item [(a)] $\sup\limits_{k\to \infty}\|\textbf{H}\|_{L^\infty(M_k)}\leq \Lambda_1$,
		\item [(b)] $\sup\limits_{x\in M_k}\sup\limits_{r>0}\frac{\mathcal{H}^n(M_k\cap B_r(x))}{\omega_n r^n}\leq  \Lambda_2$,
		\item [(c)] $\mathrm{index}(M_k)\leq I$.
	\end{itemize}
	for some fixed constants $\Lambda_1,\Lambda_2>0, I\in \mathbb{N}$. Then up to a subsequence, there exists a varifold $V$ such that $M_k$ converges to $V$ as varifold and such that
	\begin{equation}
		\begin{split}
			\mathrm{spt}V=\bar{M}
		\end{split}
	\end{equation} 
	where $M$ is a connected and oriented $C^{1,\beta}$ hypersurface in $\mathbb{R}^{n+1}$ and
	\begin{itemize}
		\item [$\mathrm(1)$] $\|\textbf{H}\|_{L^\infty(M)}\leq \Lambda_1$,
		\item [$\mathrm(2)$] $\sup\limits_{x\in M}\sup\limits_{r>0}\frac{\mathcal{H}^n(M\cap B_r(x))}{\omega_n r^n}\leq  \Lambda_2$,
		\item [$\mathrm(3)$] $\CH^{n-2}(\mathrm{sing}M)=0$.
	\end{itemize}
	We have that the convergence is in  $C^{1,\beta}$ topology for all $x\in \mathrm{reg}M\setminus\CY$ and $\beta\in (0,\beta_0)$, where $\CY\subset \bar{M}$ is a discrete set  such that $|\CY|\leq I$.
	
	Moreover, if  the convergence is $C^2$ in $\mathrm{reg}M\setminus\CY$, we have
	\begin{equation}
		\begin{split}
			\mathrm{index}(M)\leq I \text{ and }	\CH^{\alpha}(\mathrm{sing}M)=0	 \text{ for any }\alpha>n-7.
		\end{split}
	\end{equation}
\end{proposition}

\begin{proof}

 \textit{Step 1: $I=0$.}
 
 By Allard’s compactness theorem, there exist an integral varifold $V$ and a subsequence
 $\{M_{k'}\}$ such that
 \[
 M_{k'} \rightharpoonup V
 \]
 in the varifold sense. Denote $M=\operatorname{spt} V$. Then $M_{k'}$ converges to $M$
 in the local Hausdorff distance. For any $x\in \operatorname{reg} M$, the tangent cone
 of $V$ at $x$ is a plane. Consequently, there exist $\rho_x>0$ and $k_0\in\mathbb{N}$
 such that $M_{k'}$ satisfies the conditions of Lemma~\ref{lm3.2}, after scaling, for all $k'>k_0$. By \eqref{eq3.7}, it follows that $M_{k'}$
 converges to $M$ in the $C^{1,\beta}$ sense at $x$.
 
 \medskip
 \noindent\textit{Step 2: $I\neq 0$.}
 
 Define
 \begin{equation}
 	\tilde{\mathcal{Y}}
 	=
 	\Bigl\{
 	x\in \operatorname{spt} V :
 	\text{for any } r>0,\;
 	\liminf_{k\to\infty}
 	\operatorname{index}(M_k\cap B_r(x))\ge 1
 	\Bigr\}.
 \end{equation}
 
 We claim that $|\tilde{\mathcal{Y}}|\le I$. Indeed, otherwise there would exist
 $I+1$ distinct points
 $\{y_i\}_{i=1}^{I+1}\subset \tilde{\mathcal{Y}}$. One can then choose radii
 $\{r_i\}_{i=1}^{I+1}$ such that
 \[
 B_{r_i}(y_i)\cap B_{r_j}(y_j)=\emptyset
 \quad \text{for } i\neq j,
 \]
 and
 \[
 \operatorname{index}(M_k\cap B_{r_i}(y_i))\ge 1
 \quad \text{for } 1\le i\le I+1,
 \]
 for all sufficiently large $k$. By Lemma~\ref{lm1.7}, this implies
 \[
 \operatorname{index}(M_k)\ge I+1
 \]
 for $k$ large enough, contradicting the assumed index bound.
 
 For any $y\in \operatorname{supp}V\setminus \tilde{\mathcal{Y}}$, there exists $r>0$
 such that $M_k$ is stable in $B_r(y)$ for all sufficiently large $k$. By Lemma \ref{lm3.2}, $M_k$ converges to $\operatorname{reg} M$ in the
 $C^{1,\beta}$ sense in $B_{r/2}(y)$, and
 \[
 \mathcal{H}^{n-2}\bigl(\operatorname{sing} M\cap B_{r/2}(y)\bigr)=0.
 \]
 A standard covering argument then yields $ \mathcal{H}^{n-2}(\operatorname{sing} M)=0.$
 
 If the convergence is in fact $C^2$, Lemma~\ref{index-bound} implies
 \[
 \operatorname{index}(M)
 \le \liminf_{k\to\infty}\operatorname{index}(M_k)
 \le I.
 \]
 Finally, combining Lemma~\ref{lm3.2} with Federer’s inductive dimension-reduction
 argument \cite{Fer70}, adapted to the varifold setting (see the proof of
 Theorem~2 in \cite{SS81}), we conclude that
 \[
 \mathcal{H}^{\alpha}(\operatorname{sing} M)=0
 \quad \text{for all } \alpha>n-7.
 \]
 
 The theorem follows by choosing $ \mathcal{Y}=\operatorname{reg} M\setminus \tilde{\mathcal{Y}}.$
\end{proof}

\begin{remark}
Naber-Valtorta \cite{NV20}	showed that $\mathcal{H}^{n-7}( \mathrm{sing} M\cap B^{n+1}_{\rho_0/2}(0))\leq C\rho_0^{n-7}$. 
\end{remark}

\end{document}